\newenvironment{enumerateletters}{\begin{enumerate}[label=(\alph*),leftmargin=0.65cm]}{\end{enumerate}}
\newcommand{\tr}{\mathrm{tr}}
\newcommand{\e}{\mathrm{e}}
\newcommand{\pt}{\partial_t}
\newcommand{\pos}{q}
\newcommand{\mom}{p}
\newcommand{\varone}{{\widetilde{q}}}
\newcommand{\vartwo}{{\widetilde{p}}}
\newcommand{\psc}{z}
\newcommand{\pscone}{q}
\newcommand{\psctwo}{p}
\newcommand{\pstup}{{\widetilde{z}}}
\renewcommand{\Pr}{P_\vsol}
\newcommand{\Prt}{P_{\vsol(t)}}
\newcommand{\Mf}{\mathcal{M}}
\newcommand{\Ts}{\mathcal{T}_\vsol \Mf}
\newcommand{\Tst}{\mathcal{T}_{\vsol(t)} \Mf}
\newcommand{\obs}{\mathbf{A}}
\newcommand{\hobs}{\mathbf{\widetilde{A}}}
\newcommand{\op}{\mathrm{op}_\mathrm{Weyl}}
\newcommand{\re}{\mathrm{Re}\,}
\newcommand{\im}{\mathrm{Im}\,}
\newcommand{\eps}{\varepsilon}
\renewcommand{\l}{\left}
\renewcommand{\r}{\right}
\newcommand{\mPot}{\widetilde{V}}
\newcommand{\Ham}{H}
\newcommand{\refPot}{\widehat{U}}
\newcommand{\mgPot}{A}
\newcommand{\mltPot}{V}
\newcommand{\scp}{\varepsilon}
\newcommand{\cham}{h}
\newcommand{\flow}{\Phi}
\newcommand{\etime}{T}
\newcommand{\ii}{\mathrm{i}}
\renewcommand{\O}{\mathcal{O}}
\newcommand{\flowts}[2]{\flow^{#1,#2}}
\newcommand{\egor}[2]{\clobs \circ \flow^{#1,#2}}
\newcommand{\auxegor}[2]{\auxobs (#1,#2)}
\newcommand{\clobs}{\bm{{a}}}
\newcommand{\auxobs}{\widetilde{\clobs}}
\newcommand{\evof}{U}
\newcommand{\SchF}{\mathcal{S}}
\newcommand{\linMom}{\ensuremath{\mathcal{P}}}
\newcommand{\angMom}{\ensuremath{\mathcal{L}}}
\newcommand{\remEgorov}{\rho}
\newcommand{\wm}{\mathcal{C}}
\newcommand{\covm}{G}
\newcommand{\pha}{\zeta}
\newcommand{\RC}{{\wm_{\textup{R}}}}
\newcommand{\IC}{{\wm_\textup{I}}}
\newcommand{\ICinv}{{\wm_\textup{I}^{-1}}}
\newcommand{\param}{\nu}
\newcommand{\RCkl}[1]{{\wm_{\textup{R},#1 }}}
\newcommand{\ICinvkl}[1]{{(\wm^{-1}_{\textup{I}})_{#1} }}
\renewcommand{\matrix}{M}
\newcommand{\fcP}{P}
\newcommand{\fcQ}{Q}
\newcommand{\E}{\mathbb{E}}
\newcommand{\R}{\mathbb{R}}
\newcommand{\N}{\mathbb{N}}
\renewcommand{\dim}{d}
\newcommand{\C}{\mathbb{C}}
\newcommand{\advPot}{Y_\vsol}
\newcommand{\PotTaylor}{\widetilde{W}_\pos}
\newcommand{\advPotTaylor}{W_\pos}
\newcommand{\remPot}{W}
\newcommand{\remOp}{W}
\newcommand{\auxPot}{X_\vsol}
\newcommand{\divergence}{\operatorname{div}}
\newcommand{\dd}{\mathrm{d}}
\newcommand{\diff}{\mathop{}\!\mathrm{d}}
\newcommand{\dx}{\diff x}
\newcommand{\abs}[1]{\left\lvert #1 \right\rvert}
\newcommand{\norm}[2]{\left\lVert #2 \right\rVert_{#1}} 
\newcommand{\normbig}[2]{\bigl\Vert #2 \bigr\Vert_{#1}} 
\newcommand{\normBig}[2]{\Bigl\Vert #2 \Bigr\Vert_{#1}}
\newcommand{\normLp}[2]{\norm{L^{#1}}{#2}}
\newcommand{\normLpbig}[2]{\normbig{L^{#1}}{#2}}
\newcommand{\normLpBig}[2]{\normBig{L^{#1}}{#2}}
\newcommand{\normLtwo}[1]{\normLp{2}{#1}} 
\newcommand{\normLtwobig}[1]{\normLpbig{2}{#1}} 
\newcommand{\normLtwoBig}[1]{\normLpBig{2}{#1}}
\newcommand{\Lin}[2][]{\mathcal{L}(\ifthenelse{\equal{#1}{}}{#2}{#1,#2})}
\newcommand{\Id}{\mathrm{Id}}
\newcommand{\sol}{\psi}
\newcommand{\vsol}{u}
\definecolor{turkis}{HTML}{08756c}
\definecolor{bordeaux}{HTML}{550000}
\definecolor{rosa}{HTML}{e37db5}
	\def\@linkcolor{turkis}
	\def\@citecolor{bordeaux}
	\def\@urlcolor{rosa}
\newtheorem{theorem}{Theorem}[section]
\newtheorem{lemma}[theorem]{Lemma}
\newtheorem{corollary}[theorem]{Corollary}
\newtheorem{assumption}[theorem]{Assumption}
\newtheorem{prop}[theorem]{Proposition}
\theoremstyle{definition}
\newtheorem{definition}[theorem]{Definition}
\theoremstyle{remark}
\newtheorem{remark}[theorem]{Remark}
\numberwithin{equation}{section}
\crefname{equation}{}{}
\crefname{enumi}{}{}
\crefname{assumption}{assumption}{assumptions}
\crefname{lemma}{lemma}{Lemmas}
\numberwithin{equation}{section}
\newcommand{\wellposedness}{wellposedness\xspace}
\newcommand{\wellpoAdj}{well posed\xspace}
\newcommand{\timedependent}{time-dependent\xspace}
\newcommand{\timedependence}{time-dependence\xspace}
\newcommand{\timeindependent}{time-independent\xspace}
\newcommand{\Weylquantization}{Weyl quantization\xspace}
\newcommand{\Weylquantized}{Weyl-quantized\xspace}
\newcommand{\ODE}{ODE\xspace}
\newcommand{\semiclassical}{semiclassical\xspace}
\newcommand{\wrt}{with respect to\xspace}
\newcommand{\GWP}{Gaussian wave packet\xspace}
\newcommand{\GWPs}{Gaussian wave packets\xspace}
\newcommand{\Schroed}{Schr{\"o}\-din\-ger\xspace}
\newcommand{\blowup}{blow-up\xspace}
\newcommand{\alt}[1]{}
\begin{document}

\title[Variational Gaussians for the magnetic Schr{\"o}dinger equation]
	{Variational Gaussian approximation for the magnetic Schr{\"o}dinger equation}

\author[S. Burkhard]{Selina Burkhard}
\address{Institute for Applied and Numerical Mathematics, 
	Karlsruhe Institute of Technology, 76149 Karlsruhe, Germany}
\curraddr{}

\email{\{selina.burkhard, benjamin.doerich,
marlis.hochbruck\}@kit.edu	
 }
\thanks{Funded by the Deutsche Forschungsgemeinschaft (DFG, German Research Foundation) – Project-ID 258734477 – SFB 1173.}

\author[B. D{\"o}rich]{Benjamin D{\"o}rich}
\author[M. Hochbruck]{Marlis Hochbruck}
\author[C. Lasser]{Caroline Lasser}

\address{
	Department of Mathematics,
	Technical University of Munich,
	85748 Garching bei M\"unchen, Germany}
\curraddr{}

\email{classer@ma.tum.de}


\keywords{magnetic Schr{\"o}dinger equation,
	semiclassical analysis, variational approximation,
	observables}

\date{} 

\dedicatory{}

\begin{abstract}
In the present paper we consider the semiclassical magnetic 
Schr{\"o}\-din\-ger
equation, which describes the dynamics of particles under the influence of a magnetic field. The solution of the \timedependent Schr{\"o}dinger equation is approximated by a 
single Gaussian wave packet via the time-dependent Dirac--Frenkel variational principle.
For the approximation we derive ordinary differential equations of motion for the parameters of the variational solution. Moreover, we prove $L^2$-error bounds and observable error bounds for the approximating Gaussian wave packet.
\end{abstract}

\maketitle



\section{Introduction} 

In the present paper we study 
the \semiclassical magnetic Schrödinger equation 
\begin{subequations}
\begin{equation} \label{eq:sproblem}
\ii\scp\pt \sol(t) =\Ham(t)\sol(t),\quad \sol(0) = \sol_0, \quad t\in \R,
\end{equation}
on $\R^\dim$ with magnetic Hamiltonian
\begin{equation}\label{eq:ham}
\Ham(t)=  \frac12 \bigl(\ii\scp \nabla_x + \mgPot(t,x)\bigr)^2 + \mltPot(t,x),
\end{equation}
 \end{subequations}
and initial value $\sol_0 \in L^2(\R^\dim)$ with \semiclassical parameter $0< \scp \ll 1$. Here, $\mgPot$ is a magnetic vector potential, and $\mltPot$ is the electric potential. This equation arises in the modeling of the quantum dynamics of nuclei in a molecule subject to external magnetic fields.
From a numerical point of view, solving this time-dependent partial differential equation raises three major problems. First, it is a high-dimensional problem, since the space dimension is typically given by $\dim = 3N$, where 
$N$ is the number of nuclear particles in the system. Further, the computational domain~$\R^\dim$ is naturally  unbounded, and thus most numerical methods require truncation before discretization. For the method of lines (first discretize space, then time), high dimension combined with an unbounded domain leads to 
inadequately if not unattractably large systems that have to be integrated in time. 
Another challenge is given by the high oscillations induced by the small \semiclassical parameter~$\scp$. For standard time integration schemes severe stepsize restrictions have to be imposed and leave these methods impracticable. 

We consider the case that the initial value $\sol_0$ is strongly localized and given by a Gaussian wave packet, 
\[
\psi_0(x) = \exp{\Bigl(\frac{\ii}{\scp}\bigl(\frac12(x-\pos)^T \wm (x-\pos)+(x-\pos)^T \mom +\pha\bigr)\Bigr)},
\]
where $\pos,\mom\in\R^d$ are the packet's position and momentum center, $\wm\in\C^{d\times d}$ is the width matrix of the envelope, and $\pha\in\C$ a phase and weight parameter.
For $\mgPot = 0$ it is well established that it is possible to reasonably approximate the solution by a Gaussian wave packet with parameters that are evolved according to ordinary differential equations. 
First studies in this direction are due to K.~Hepp \cite{Hep74} and G.~Hagedorn \cite{Hag80} from the perspective of mathematical physics, and E.~Heller \cite{Hel76,Hel81} as well as R.~Coalson, M.~Karplus \cite{CoaK90} with already an eye on numerical computation. The evolution equations for the parameters of all Gaussian wave packet approximations can be classified in two categories:

\smallskip
\textit{Variational:} The variational approach
relies on
the time-dependent Dirac--Frenkel principle 
for deriving the parameter equations of motion. By the variational construction, the Gaussian wave packet automatically inherits several conservation properties of the exact solution.

\smallskip
\textit{Semiclassical:} The \semiclassical approach expands the wave packet ansatz with respect to the 
semiclassical parameter $\scp$ and derives $\eps$ independent parameter equations by matching terms 
with the same order.  

\smallskip
Both types of ordinary differential equations have the advantageous property, that their solutions are non-oscillatory. 
Both approximations have the same convergence order with respect to the \semiclassical parameter $\scp$ in $L^2$-norm, and both reproduce the  exact solution for the special case of Schr\"odinger operators with linear magnetic potential $\mgPot$ and quadratic electric potential $\mltPot$. 
For a further discussion, we refer to \cite[Chapter~10.2]{Car21} for a monograph that covers the \semiclassical construction, to \cite[Chapter II.4]{Lub08_book} or \cite[Chapter 3]{LasL20} for a short book and a review presenting the variational case, and to \cite{Van23} for a general presentation of Gaussian wave packet dynamics.

\subsection*{Contributions of the paper}
Our main contribution in this paper is to first show that for the magnetic Schr\"odinger equation the variational approximation is still given by a system of ordinary differential equations for the parameters defining the Gaussian wave packet.
Second, we prove rigorous error bounds for this approximation on finite time intervals $[0,T]$ in terms of the \semiclassical parameter $\scp$.
The presented results generalize the bounds established in \cite{Lub08_book,LasL20} to non-vanishing magnetic potentials~$\mgPot$
and further allow for time-dependencies in both the electric and the magnetic potential. 
We also treat the more general case where the dynamics are generated by the Weyl quantization 
of a smooth and subquadratic Hamiltonian function. 
This includes convergence in the $L^2$-norm with order $\mathcal{O}(\sqrt{\scp})$ as well as for expectation values of observables,
 which resemble certain measurable physical quantities of the wave function,
with order $\mathcal{O}(\scp^2)$. These estimates extend and improve the observable bound of \cite[Theorem~3.5]{LasL20} and the result of \cite{Ohs21} from the case of vanishing magnetic potential.  
%
%
%
%
%
Let us point out that the design and the analysis of time integrators for the magnetic variational equations of motion are currently under investigation.

\subsection*{Further wave packet results for $\mgPot=0$}
Hagedorn wave packets \cite{Hag80,Hag81,Hag98} are a multivariate anisotropic generalization of the Hermite functions. They are Gaussian wave packets with a polynomial prefactor, such that a family of them 
constitutes an orthonormal basis of $L^2(\R^d)$. In \cite{FaoGL09,GraH14,BlaG20}, time splitting integrators 
for Hagedorn wave packet approximations are proposed, that combine parameter propagation by ordinary differential equations with a Galerkin step. A spawning method for several families of Hagedorn wave packets is introduced in \cite{RieG22*}.  
For variational Gaussian wave packets, a time splitting integrator, which is robust in the semiclassical parameter $\scp$, is proposed in \cite{FaoL06}.
Recently in \cite{Ohs21}, T. Oshawa has analysed the expectation values of position and momentum for a variational Gaussian wave packet and proved $\mathcal{O}(\scp^{3/2})$ accuracy. 
Our results here generalize and improve this error bound in two ways: First, we allow for general sublinear observables. Second, our method of proof shows $\mathcal O(\scp^2)$ observable accuracy also for the case $\mgPot\neq0$. It is worthwhile emphasizing, that from the perspective of the observable error variational Gaussians are more accurate than their semiclassical counterparts.

\begin{table}
\begin{tabular}{l|l|l}\hline
Gaussian & $L^2$-norm & observables\\*[0.5ex]\hline 
\semiclassical & $\mathcal O(\sqrt\scp)$ & $\mathcal O(\scp)$\\*[0.5ex]
variational & $\mathcal O(\sqrt\scp)$ & $\mathcal O(\scp^2)$\\\hline
\end{tabular}\\*[1ex]
\caption{Error bounds for the \semiclassical and the variational approximation of magnetic Schr\"odinger dynamics according to \Cref{thm:L2} and \Cref{thm:obs}. The variational observable error estimate extends and improves previously known results.}
\end{table}


\subsection*{Related wave packet results for $\mgPot\neq 0$}
The most general result for the \semiclassical wave packet approach is given in 
\cite[Theorem~21]{RobC21_book} of the monograph by D.~Robert and  M.~Combescure. 
There, the propagation of Gaussian and Hagedorn wave packets is covered for a general class of 
time-dependent Hamiltonian operators~$H(t)$, that includes the magnetic Schr\"odinger operator. 
The error analysis is with respect to the $L^2$-norm, but not for observables. The \semiclassical 
construction there also receives corrections, such that it can be accurate to order $\mathcal O(\scp^{k/2})$ 
for any $k\ge1$. 
In \cite{BoiV21}, magnetic Schr\"odinger operators with polynomially bounded, time-independent magnetic 
fields and zero potential are considered. The initial coherent state has zero initial energy and its propagation 
is analysed for the long-time horizon $[0,T/\scp]$.   
In \cite{KinO20}, N.~King and  T.~Ohsawa derive the equations of motion for variational Gaussians in the presence of a magnetic field. They conduct numerical experiments for the expectation value of the position and the momentum operator suggesting that the variational Gaussians are more accurate than the \semiclassical ones. 
An extension of the Hagedorn Galerkin method \cite{FaoGL09} to the case of magnetic Schr\"odinger equations is studied in \cite{Zho14}, including an error analysis with respect to the $L^2$-norm. However, no error bounds for the observables are investigated there.
For linear magnetic potentials of a particular structure, in \cite{GraR21} a problem adapted splitting method for Hagedorn wave packets is derived but without error analysis. 
A slightly different approach, called the Gaussian wave packet transform, is proposed for the magnetic Schr\"odinger equation in \cite{ZhoR19}. There, the ordinary differential equations for the Gaussian parameters are the \semiclassical ones except for an additional term for the scalar parameter $\zeta$.

\subsection*{Outline of the paper}
The rest of the paper is structured as follows. 
For our error analysis we introduce the analytical framework
and the variational Gaussian wave packet ansatz in \Cref{sec:setmain}.
We present our main results for the magnetic Schr\"odinger equation in \Cref{sec:mainres},  including the equations for the parameters, the conservation of different quantities, the convergence in the $L^2$-norm and the convergence of the observables.
The proofs of the corresponding results are given in 
\Cref{sec:eqmo,sec:L2,sec:expv,sec:obs}.
%

\subsection*{Notation}

Throughout the paper, we denote by $L^p(\R^\dim)$ the classical Lebesgue spaces, and by
$\SchF(\R^\dim)$ the Schwartz space of rapidly decreasing functions.
Further, we make use of the multiindex notation and let for $\alpha =(\alpha_1,\ldots,\alpha_\dim) \in \N_0^\dim$, $x\in\R^\dim$, $f \in \SchF(\R^\dim)$
\begin{equation}
|\alpha| \coloneqq \alpha_1 + \ldots + \alpha_\dim,
\qquad
x^\alpha \coloneqq x_1^{\alpha_1} \ldots x_\dim^{\alpha_\dim},
\qquad
\partial^\alpha f \coloneqq \partial_1^{\alpha_1} \ldots \partial_\dim^{\alpha_\dim} f.
\end{equation}
For a function $W \colon \R^\dim \to \R^L$, $L\geq 1$, we define the average
\begin{equation}
\langle W\rangle_\vsol \coloneqq \langle \vsol |W\vsol\rangle = \int_{\R^d}  W(x) |\vsol(x)|^2 \,\dx,
\end{equation}
if the integral exists. For a linear operator $\obs$ acting on $L^2(\R^d)$, we denote
\[
\langle \obs\rangle_\vsol \coloneqq \bigl\langle \vsol| \obs \vsol \bigr\rangle = 
\int_{\R^d}  \overline{\vsol(x)}(\obs\vsol)(x) \,\dx,
\] 
whenever the integral is well-defined. We also use the dot product of 
$v,w\in\C^L$ as $v\cdot w \coloneqq v^Tw = v_1w_1 + \cdots + v_L w_L$.

\section{General setting}
\label{sec:setmain}
%
%

We first discuss the analytic framework for our analysis and introduce the Gaussian wave packets. We further call some results on the \wellposedness from the literature.
For the vector potential we choose the  
Coulomb gauge, i.e. $\divergence \mgPot = 0$.
In order to shorten notation, we rewrite the Ha\-mil\-to\-ni\-an in \eqref{eq:ham} as
\begin{equation}\label{eq:hamop}
  \Ham(t)=  -\frac{\scp^2}{2}\Delta +\ii\scp\mgPot(t)\cdot \nabla + \mPot(t),
  \qquad
  {\mPot \coloneqq \frac{1}{2}|\mgPot|^2 + \mltPot} .
\end{equation}
%
%

Throughout this paper we make the following smoothness and growth assumption on the potentials.
%
%

\begin{assumption} \label{ass:onpotentials}
	The scalar potential $\mPot \colon \R \times \R^\dim \to \R$
	and the vector valued potential $\mgPot = (\mgPot_j)_{j=1, \ldots, \dim} \colon \R \times \R^\dim \to \R^\dim$ are
	infinitely often differentiable and in addition
	\begin{enumerate}
		\item $\mPot$ is subquadratic, i.e. $\nabla^k \mPot$ is bounded for all $k\geq 2$, and
		\item $\mgPot$ is sublinear, i.e. $\nabla^k \mgPot$ is bounded for all $k\geq 1$, and satisfies $\divergence \mgPot = 0$.
	\end{enumerate}
\end{assumption}

If in addition to \Cref{ass:onpotentials}, we assume that $\partial_t \mgPot$ is sublinear, then it can be shown that the initial value problem \eqref{eq:sproblem} 
is \wellpoAdj
for initial values in $L^2$, cf. \cite[sec.~4]{Yaj21} or the remarks after~\cite[Def.~1]{RobC21_book} or \cite[Rem.~5.14]{MasR17}.
%
In particular, 
 the following \wellposedness result on the unitarity of the time evolution guarantees that the norm of the solution of \eqref{eq:sproblem} is the same as the one of the initial data. However, for our analysis here, only \Cref{ass:onpotentials} will be used.

%

\begin{theorem}\label{thm:wellposedness}
	Let \Cref{ass:onpotentials} hold  and assume that $\partial_t \mgPot$ is sublinear.
 There exists a unitary evolution family $(\evof(t,s))_{t,s\in \R}$ on $L^2(\R^\dim)$ such that
		for all initial data $\sol_0\in L^2(\R^\dim)$ the solution $\sol$ of \eqref{eq:sproblem} is given by
		\begin{equation}\label{eq:sol}
		\sol(t) = \evof(t,0)\sol_0.
		\end{equation}
		%
\end{theorem}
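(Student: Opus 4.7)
The plan is to reduce the statement to standard results on time-dependent unitary propagators for magnetic Schr\"odinger operators. First, I would check that for each fixed $t \in \R$ the operator $\Ham(t)$ defined through \eqref{eq:hamop} is essentially self-adjoint on the Schwartz space $\SchF(\R^\dim)$. The Coulomb gauge $\divergence\mgPot = 0$ makes the cross term $\ii\scp\mgPot\cdot\nabla$ formally symmetric, since a straightforward integration by parts gives $\ii\scp\mgPot\cdot\nabla + (\ii\scp\mgPot\cdot\nabla)^* = \ii\scp\,\divergence\mgPot = 0$. By \Cref{ass:onpotentials}, $\mgPot$ has bounded first derivatives, so $\mgPot(t,\cdot)$ grows at most linearly and $\mPot(t,\cdot)$ at most quadratically, and standard Faris--Lavine or Kato--Rellich arguments yield essential self-adjointness on $\SchF(\R^\dim)$.

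Second, in order to obtain an evolution family, I would verify the hypotheses of Kato's theorem for linear evolutions for the family $\{\Ham(t)\}_{t\in\R}$. The key ingredients are: (i) a $t$-independent common core (the Schwartz space, or the natural magnetic Sobolev space of order two) on which every $\Ham(t)$ is defined; (ii) strong continuous differentiability of $t\mapsto \Ham(t)\sol$ on this core, which is precisely where the additional sublinearity of $\pt\mgPot$ enters, since it allows to bound $\pt\Ham(t)= \ii\scp\,\pt\mgPot\cdot\nabla + \pt\mPot$ relatively to $\Ham(s)$; (iii) stability of the skew-adjoint family $-\ii\Ham(t)/\scp$, which is automatic from selfadjointness via contractivity of the generated unitary groups.

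Third, once Kato's theorem applies, it produces a strongly continuous two-parameter family of unitaries $(\evof(t,s))_{t,s\in\R}$ satisfying the cocycle identity $\evof(t,s)\evof(s,r)=\evof(t,r)$, $\evof(t,t)=\Id$, and the evolution equation
\begin{equation*}
\ii\scp\,\pt \evof(t,s)\sol_0 = \Ham(t)\evof(t,s)\sol_0
\end{equation*}
for $\sol_0$ in the common domain. For general $\sol_0\in L^2(\R^\dim)$, \eqref{eq:sol} is then taken as the definition of the mild solution, and uniqueness follows from unitarity together with density of the core in $L^2(\R^\dim)$.

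The main obstacle is not the existence of a propagator per se, but verifying the time-regularity hypotheses in the presence of a \timedependent magnetic potential; this is precisely the role of the extra sublinearity assumption on $\pt\mgPot$. As the excerpt indicates, the complete verification for exactly this class of operators has been carried out in \cite{Yaj21} and in the remarks after \cite[Def.~1]{RobC21_book}, so a written-out argument would reduce to matching \Cref{ass:onpotentials} plus sublinearity of $\pt\mgPot$ to the hypotheses used there and invoking the corresponding existence and uniqueness theorem.
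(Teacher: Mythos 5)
Your proposal is correct and takes essentially the same route as the paper, which does not prove this statement from scratch either but delegates it to the cited literature; your outline (essential self-adjointness of each $\Ham(t)$ on a common core via the subquadratic/sublinear growth, then Kato's theorem for evolution families, with the sublinearity of $\partial_t\mgPot$ supplying the time-regularity hypothesis) is exactly what those references carry out. One small slip in your heuristic: symmetry of the cross term in Coulomb gauge follows from $(\ii\scp\,\mgPot\cdot\nabla)^{*} - \ii\scp\,\mgPot\cdot\nabla = \ii\scp\,\divergence\mgPot$, i.e.\ the \emph{difference} of the operator and its adjoint equals $\ii\scp\,\divergence\mgPot$, not their sum.
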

%
%
%
%
In the case of time-independent potentials the evolution family $(U(t,s))_{t,s \in\R}$ reduces to the unitary group $(e^{-it/\scp H})_{t \in \R}$
on $L^2(\R^\dim)$, which which is given by the spectral theorem and commutes with the Hamiltonian.

Following \cite[Chapter~3]{LasL20}, we approximate the solution $\sol$ of \eqref{eq:sproblem} in the manifold~$\Mf$ of \GWPs given by

\begin{align} 
\Mf = \Bigl\{ 
&g \in L^2(\R^\dim)  \bigm|
g(x) = \exp{\Bigl(\frac{\ii}{\scp}\bigl(\frac12(x-\pos)^T \wm (x-\pos)+(x-\pos)^T \mom +\pha\bigr)\Bigr) } , \nonumber
\\
&\pos,\,\mom\in \R^\dim, \,
 \wm = \wm^T \in \C^{\dim\times \dim}, \,
\im \wm 
\text{ positive definite}, \pha\in \C
\Bigr\} . \label{eq:gwp}
\end{align}

The approximating \GWP is characterized by
the Dirac--Frenkel variational formulation, cf.~\cite{LasL20,Lub08_book}:
seek $\vsol (t) \in \Mf$ such that for all $t \in\R$ it holds 
%
\begin{equation}\label{eq:varskp}
\pt \vsol(t)  \in \Tst, \quad\bigl\langle \ii\scp \pt \vsol(t) - \Ham(t)\vsol(t)| v \bigr\rangle = 0
\quad
\text{for all}
\quad 
 v\in \Tst,
\end{equation}
with initial value $\vsol(0) = \vsol_0 \in \Mf$.  Using the orthogonal projection $\Pr : L^2(\R^\dim)\to\Ts$
we can equivalently write 
\begin{align} \label{eq:var}
\ii\varepsilon\pt \vsol(t) = \Prt\bigl(\Ham(t)\vsol(t) \bigr), \quad \vsol(0) = \vsol_0 \in \Mf.
\end{align}
We note that \eqref{eq:var} can also be stated in terms of the symplectic projection onto the tangent space, see C. Lubich's blue book \cite[II.1.3]{Lub08_book}.

\begin{remark} \label[remark]{rem:non_gaus_init}
	In the time-independent and non-magnetic case, one can also
	treat initial values $\sol_0 \notin \Mf$
	using continuous superpositions of thawed and frozen Gaussians, see \cite[Ch.~5]{LasL20}.
	The extension of these to the case \eqref{eq:ham}, however, is beyond  the scope of the present work.
\end{remark}

For the manifold $\Mf$ defined in \eqref{eq:gwp} the tangent space $\Ts$ takes the following simple form.
\begin{lemma}[{\cite[Lemma~3.1]{LasL20}}]\label[lemma]{lem:tangent}
 For $\vsol \in \Mf$ we have
\begin{equation}
\Ts = \left\{\varphi u\,|\, \varphi~\dim\text{-variate complex polynomial of degree at most }2\right\}.
\end{equation}
\end{lemma}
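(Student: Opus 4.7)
The plan is to parameterize $\Mf$ by the coordinates $(\pos,\mom,\wm,\pha) \in \R^\dim\times\R^\dim\times \mathcal{S}\times\C$, where $\mathcal{S}$ denotes the complex symmetric $\dim\times\dim$ matrices with positive definite imaginary part. Since this parameterization is smooth, the tangent space $\Ts$ at a point $\vsol \in \Mf$ is the real linear span of all partial derivatives of the parameterization with respect to the (real components of the) parameters. The inclusion $\subseteq$ in the claim will follow by a direct computation of these partial derivatives, while the reverse inclusion $\supseteq$ will follow by noting that the derivatives together generate, over $\R$, every complex polynomial of degree at most two multiplied by $\vsol$.

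For the first inclusion, I would compute
\begin{align*}
 \ptl_{\re \pha}\vsol &= \tfrac{\ii}{\scp}\,\vsol,
 & \ptl_{\im \pha}\vsol &= -\tfrac{1}{\scp}\,\vsol,\\
 \ptl_{\mom_j}\vsol &= \tfrac{\ii}{\scp}(x-\pos)_j\,\vsol,
 & \ptl_{\pos_j}\vsol &= -\tfrac{\ii}{\scp}\bigl((\wm(x-\pos))_j+\mom_j\bigr)\,\vsol,
\end{align*}
and, using the symmetry constraint on $\wm$, the derivatives $\ptl_{\re\wm_{jk}}\vsol$ and $\ptl_{\im\wm_{jk}}\vsol$ for $j\le k$, which all have the shape $(x-\pos)_j(x-\pos)_k\,\vsol$ up to complex scalars. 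Each of these is visibly of the form $\varphi\,\vsol$ with $\varphi$ a polynomial in $x$ of degree at most two. Taking real linear combinations gives $\Ts \subseteq \{\varphi\vsol : \deg\varphi\le 2\}$.

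For the reverse inclusion, I would argue degree by degree. The $\pha$ derivatives already produce all constants $c\,\vsol$ with $c\in\C$. The $\wm$ derivatives, since both $\re\wm_{jk}$ and $\im\wm_{jk}$ can be varied independently for $j\le k$ (using symmetry), span all purely quadratic $\C$-linear combinations of the monomials $(x-\pos)_j(x-\pos)_k$. The only potentially delicate point is the linear part: the $\mom_j$ derivatives yield exactly the purely imaginary linear polynomials $\tfrac{\ii}{\scp}a\cdot(x-\pos)$ with $a\in\R^\dim$, while real $\pos$ combinations produce, modulo constants already obtained, polynomials of the form $-\tfrac{\ii}{\scp}(\wm a)\cdot(x-\pos) = \tfrac{1}{\scp}(\im\wm)\,a\cdot(x-\pos) - \tfrac{\ii}{\scp}(\re\wm)\,a\cdot(x-\pos)$. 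Since $\im\wm$ is positive definite and hence invertible, the real parts $(\im\wm)\,a$ range over all of $\R^\dim$ as $a$ does; combining with the imaginary parts from the $\mom_j$ directions then generates every complex linear polynomial in $(x-\pos)$. This completes the span.

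The main (and essentially only) obstacle is this last point: showing that the mixing of $\pos$- and $\mom$-variations together exhausts the complex linear polynomials. It is precisely the defining property $\im\wm>0$ of the Gaussian manifold that makes this work, so I would be sure to highlight that in the write-up. As a sanity check one may also verify the matching of real dimensions, namely $\dim_\R \Mf = 2\dim + \dim(\dim+1) + 2 = (\dim+1)(\dim+2)$, which coincides with the real dimension of the space of complex polynomials in $\dim$ variables of degree at most two, confirming that the partial derivatives form a basis of $\Ts$.
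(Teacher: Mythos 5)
Your proposal is correct and follows essentially the same route as the cited proof in \cite{LasL20}: differentiate the parameterization $(\pos,\mom,\wm,\pha)\mapsto\vsol$ and observe that the resulting real span is exactly the complex polynomials of degree at most two times $\vsol$, with the invertibility of $\im\wm$ being the key point for recovering all complex linear terms. The paper itself does not reprove this lemma but cites it, and your dimension count $(\dim+1)(\dim+2)$ is a correct and useful consistency check.
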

The approximation by \GWPs seems appropriate due to the following exactness result, which is a consequence of \Cref{lem:tangent} together with \eqref{eq:var} and \Cref{thm:wellposedness}.
%
\begin{prop}[{\cite[Prop.~3.2]{LasL20}}]\label[prop]{thm:exact}
  Let $\mltPot(t,\cdot)$ be quadratic and $\mgPot(t,\cdot)$ be linear in space for all ${t \in \R}$. If $\sol_0\in\Mf$, then the
  variational approximation $\vsol$ defined by  \eqref{eq:var} is exact, i.e.,
  $\vsol(t)=\sol(t)$, where $\sol$ denotes 
  the solution of \eqref{eq:sproblem}.
\end{prop}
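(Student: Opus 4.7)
\medskip

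The plan is to show that under the stated quadratic/linear hypothesis on the potentials, the action of the Hamiltonian preserves the tangent space: $\Ham(t)u \in \Ts$ for every $u \in \Mf$. Once this is established, the orthogonal projector $\Pr$ acts as the identity on $\Ham(t)u$, so the variational equation \eqref{eq:var} collapses to the genuine Schrödinger equation, and the conclusion follows by uniqueness together with the initial condition $\vsol(0)=\sol_0$.

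\medskip

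First I would write out $\Ham(t)u$ term by term using the form \eqref{eq:hamop}. For a Gaussian $u \in \Mf$ one checks by direct differentiation that
\[
\scp \nabla u = \ii \bigl(\wm (x-\pos) + \mom\bigr) u,
\qquad
\scp^2 \Delta u = \bigl( \ii \scp \,\tr \wm - (\wm(x-\pos) + \mom)\cdot(\wm(x-\pos) + \mom)\bigr) u,
\]
so $-\tfrac{\scp^2}{2}\Delta u$ equals a polynomial of degree at most two in $x$ multiplied by $u$. By hypothesis $\mgPot(t,\cdot)$ is affine, hence $\ii\scp\, \mgPot(t,x)\cdot \nabla u = -\mgPot(t,x)\cdot(\wm(x-\pos)+\mom)\, u$ is again at most quadratic in $x$ times $u$. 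Since $\mltPot(t,\cdot)$ is quadratic and $|\mgPot(t,\cdot)|^2$ is quadratic as a product of two affine functions, the scalar potential $\mPot(t,\cdot)$ is quadratic, so $\mPot(t,x)u$ is also a quadratic polynomial times $u$. Adding the three contributions produces $\Ham(t)u = \poly(t,x)\, u$ with $\poly(t,\cdot)$ a $\dim$-variate complex polynomial of degree at most two. By \Cref{lem:tangent} this is precisely an element of $\Ts$.

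\medskip

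With $\Ham(t)\vsol(t) \in \Tst$ for every $t$, the projection is trivial, $\Prt\bigl(\Ham(t)\vsol(t)\bigr) = \Ham(t)\vsol(t)$, and the variational equation \eqref{eq:var} reads
\[
\ii\scp \pt \vsol(t) = \Ham(t)\vsol(t), \qquad \vsol(0)=\sol_0,
\]
which is exactly \eqref{eq:sproblem}. Since the hypotheses of \Cref{ass:onpotentials} are satisfied (indeed trivially, as all high derivatives of $\mPot$ and $\mgPot$ vanish), \Cref{thm:wellposedness} yields a unique $L^2$-solution, and therefore $\vsol(t)=\sol(t)$ for all $t \in \R$.

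\medskip

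The only point requiring attention is existence of $\vsol(t)$ on all of $\R$: a priori the variational flow is only guaranteed to exist locally as a flow on the finite-dimensional manifold $\Mf$. However, the preceding computation shows that as long as $\vsol(t)$ exists in $\Mf$, it coincides with $\sol(t)$ given by Theorem~\ref{thm:wellposedness}, so in particular it cannot leave $\Mf$ in finite time (the Gaussian form is preserved by the exact dynamics in the quadratic/linear case, as is visible from the explicit parameter ODEs to be derived in \Cref{sec:eqmo}). This provides the global extension needed to conclude $\vsol(t)=\sol(t)$ for all $t \in \R$. I expect this last point about global existence on $\Mf$ to be the only technical subtlety; the main algebraic content is the polynomial degree count above, which is immediate.
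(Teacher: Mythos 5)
Your argument is correct and is exactly the route the paper takes: it states that \Cref{thm:exact} is a consequence of \Cref{lem:tangent} together with \eqref{eq:var} and \Cref{thm:wellposedness}, i.e.\ precisely your observation that $\Ham(t)\vsol$ is a quadratic polynomial times $\vsol$ and hence lies in $\Tst$, so the projection is trivial and uniqueness of the Schr\"odinger flow finishes the proof. Your closing remark on global existence in $\Mf$ is a legitimate point (settled by the global wellposedness of the parameter ODEs and the symplectic $\fcQ,\fcP$ factorization keeping $\im\wm$ positive definite), which the paper leaves implicit.
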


In the next section we derive a system of ordinary differential equations to determine parameters of the variational solution $\vsol\in \Mf$ and present error bounds
for the variational approximation.
\section{Main results}\label{sec:mainres}

In the remaining paper we consider \eqref{eq:sproblem} and
\eqref{eq:var} for  initial data satisfying
\begin{equation}\label{eq:gauss_initialdata}
\sol_0 = \vsol_0\in \Mf\quad \text{ and } \quad \normLtwo{\vsol_0} =1.
\end{equation}
Our first step is to derive equations of motions for the
parameters defining the variational solution $\vsol$. Then we show that in the limit $\scp \to 0$, these equations tend to classical equations of motions.
Moreover, we study geometric properties of the solution and the variational approximation. Finally, we 
state error bounds for the solution in the $L^2$-norm and for averages of observables.
Our work generalizes the results in \cite{LasL20}
in the sense that we
treat time-dependent, magnetic Hamiltonians. We also generalize the results of \cite{KinO20,Ohs21} from the position and momentum operator to sublinear observables in the sense of \Cref{ass:onpotentials}.
For the sake of readability, we postpone the proofs to \Cref{sec:expv,sec:eqmo,sec:obs,sec:L2}.

\subsection{Variational equations of motion} \label{subsec:motion}


In order to write equations of motion for the parameters of a \GWP $u \in \Mf$ we use the short notation 
\begin{alignat}{5}
  \RC & = \re \wm, & \IC & = \im \wm, \\
v &= (v_j)_{j=1}^\dim,
\qquad &\mgPot &= (\mgPot_j)_{j=1}^\dim,
\\
J_\mgPot &= \bigl(\partial_j \mgPot_k\bigr)_{j,k = 1}^\dim, \qquad&
(D^2_{\mgPot,v})_{k,l} &= {\textstyle\sum_{j=1}^\dim}{\partial_l\partial_k \mgPot_j v_j.}
\end{alignat}
We start by deriving two equivalent sets of equations for
$0 < \scp \ll 1$. In the following section, we discuss the limit $\scp \to 0$
and show that the two sets lead to the classical
equations of motion for charged particles in a magnetic field
given by  the \timedependent Hamiltonian function
\begin{align}
\cham(t,\varone,\vartwo) 
&= \frac{1}{2}|\vartwo|^2-\mgPot(t,\varone)\cdot \vartwo + \mPot(t,\varone),\quad 
(t,\varone,\vartwo)\in\R\times\R^{2\dim},\label{eq:clham}
\end{align}
cf.~\cite{GusS20_book, Hal13_book}.
The first set of equations of motion reads:

\begin{theorem}\label{thm:eqmo}
Let $\vsol_0$ satisfy \eqref{eq:gauss_initialdata} and  be given by its parameters $\pos_0, \mom_0, \wm_0, \pha_0$ defined in \eqref{eq:gwp}.
Then, the parameters of the solution $u\in \Mf$ of \eqref{eq:var} satisfy
\begin{subequations}\label{eq:eqmo} 
\begin{alignat}{5}
\dot{\pos} &= \mom - \langle \mgPot\rangle_\vsol,
\label{eq:eqmo_q}\\
\dot{\mom} &=  \frac{\scp}{2}\l\langle\nabla \tr\l(J_A^T \RC \ICinv \r)\r\rangle_\vsol + \langle J_\mgPot \rangle_\vsol^T \mom - \langle \nabla \mPot \rangle_\vsol,
\label{eq:eqmo_p}\\
\dot{\wm}  &= - \wm^2  + \langle  D^2_{\mgPot,\mom}\rangle_\vsol + \langle J_\mgPot\rangle_\vsol^T \wm + \wm \langle J_\mgPot\rangle_\vsol - \langle \nabla^2 \mPot \rangle_\vsol  . \label{eq:eqmo_C}\\ 
&\,\quad+ \frac{\scp}{2} \bigl\langle \nabla^2 \tr\l(J_\mgPot^T \RC \ICinv\r)\bigr\rangle_\vsol, \nonumber\\
\dot{\pha} &= \frac{1}{2} |\mom|^2 + \frac{\scp}{2} \bigl\langle  \tr\l(J_\mgPot^T \RC \ICinv\r)\bigr\rangle_\vsol + \frac{\ii \scp}{2} \tr (\wm) \label{eq:eqmo_pha} \\
&\quad 
- \frac{\scp}{4}\tr\Bigl(\ICinv\bigl(\frac{\scp}{2} \bigl\langle \nabla^2 \tr\l(J_\mgPot^T \RC \ICinv\r)\bigr\rangle_\vsol 
+
 \l\langle J_\mgPot\r\rangle_\vsol^T \RC +\RC \langle J_\mgPot\rangle_\vsol 
 +
  \langle  D^2_{\mgPot,\mom}\rangle_\vsol\bigr)\Bigr) \nonumber
\\
&\quad - \langle \mPot\rangle_\vsol + \frac{\scp}{4}\tr\bigl(\ICinv \bigl\langle \nabla^2 \mPot\bigr\rangle_u\bigr), \nonumber
\end{alignat}
with initial data $(\pos(0),\mom(0), \wm(0), \pha(0)) = (\pos_0, \mom_0, \wm_0, \pha_0)$.
\end{subequations}
\end{theorem}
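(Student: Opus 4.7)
The strategy is to substitute the Gaussian ansatz \eqref{eq:gwp} into the variational formulation \eqref{eq:varskp} and extract the parameter equations by testing against the explicit basis of the tangent space provided by \Cref{lem:tangent}. With $y := x - \pos$, a direct differentiation of the ansatz yields
\begin{equation*}
\ii\scp\,\partial_t \vsol(x) = -\Bigl(\tfrac12 y^T \dot{\wm}\, y + y^T \dot{\mom} - \dot{\pos}^T (\wm y + \mom) + \dot{\pha}\Bigr) \vsol(x),
\end{equation*}
which is already a quadratic polynomial in $y$ times $\vsol$ and hence lies in $\Ts$ by \Cref{lem:tangent}.

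Next, I would compute $\Ham \vsol$ pointwise. From $\nabla \vsol = (\ii/\scp)(\wm y + \mom)\vsol$ and $\Delta \vsol = \bigl((\ii/\scp)\tr \wm - (1/\scp^2)|\wm y + \mom|^2\bigr)\vsol$, one obtains $\Ham \vsol = M(x)\vsol$ with
\begin{equation*}
M(x) = -\tfrac{\ii\scp}{2}\tr\wm + \tfrac12|\wm y + \mom|^2 - \mgPot(t,x)\cdot (\wm y + \mom) + \mPot(t,x).
\end{equation*}
The first two summands are already quadratic in $y$, while $\mgPot$ and $\mPot$ supply the only non-polynomial contributions.

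The variational condition $\bigl\langle \ii\scp\, \partial_t \vsol - \Ham \vsol | \psi \vsol\bigr\rangle = 0$ for every polynomial $\psi$ in $y$ of degree at most two then reduces to
\begin{equation*}
\int \overline{\psi(y)}\, F(t,x)\, |\vsol(x)|^2 \,\dx = 0,
\end{equation*}
where $F(t,x)$ collects the coefficients of $(\dot{\pos}, \dot{\mom}, \dot{\wm}, \dot{\pha})$ together with the potential contributions from $M$. Choosing the canonical basis $\psi \in \{1,\, y_j,\, y_j y_k\}$ yields a square linear system: the $1 + \dim + \dim(\dim+1)/2$ complex equations, split into real and imaginary parts, match exactly the $\dim^2 + 3\dim + 2$ real parameters. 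The polynomial part of $F$ is evaluated via the Gaussian moments $\langle y_j y_k\rangle_\vsol = \tfrac{\scp}{2}\,\ICinvkl{jk}$ and Wick's formula for the fourth-order moments.

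The main technical obstacle is the systematic treatment of the mixed moments $\langle y_k \mgPot_j\rangle_\vsol$, $\langle y_k \mPot\rangle_\vsol$, and their higher-order analogues arising from the potentials. These are handled by the Gaussian integration-by-parts identity, which in vector form reads
\begin{equation*}
\int y\, f(x)\, |\vsol(x)|^2 \,\dx = \tfrac{\scp}{2}\,\ICinv \int \nabla f(x)\, |\vsol(x)|^2 \,\dx,
\end{equation*}
and converts the potential moments into averages of $\nabla \mgPot$ and $\nabla \mPot$. The combination $\RC\,\ICinv$ arises naturally through the splitting $\wm = \RC + \ii\, \IC$, while the Coulomb gauge $\divergence \mgPot = 0$, i.e.\ $\tr J_\mgPot = 0$, is precisely what cancels the imaginary piece and leaves the real-valued combinations displayed in \eqref{eq:eqmo_p}--\eqref{eq:eqmo_pha}. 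Separating real and imaginary parts of the linear test equations ($\psi = y_j$) decouples $\dot{\pos}$ from $\dot{\mom}$, symmetrizing the quadratic test equations ($\psi = y_j y_k$) extracts the symmetric matrix $\dot{\wm}$, and the constant test ($\psi = 1$) supplies $\dot{\pha}$; the remaining verification is lengthy but routine.
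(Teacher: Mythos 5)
Your proposal is correct and takes essentially the same route as the paper: write $\ii\scp\,\partial_t u$ and $Hu$ as multipliers of $u$, impose the Galerkin conditions on the degree-$\le 2$ tangent space from \Cref{lem:tangent}, and convert the resulting moments of $x-\pos$ into $\scp$-weighted averages of derivatives via the Gaussian integration-by-parts identity, with the Coulomb gauge $\tr J_\mgPot=0$ eliminating the $\IC$-contributions and leaving the $\RC\ICinv$ combinations. The only cosmetic difference is that you test against the raw monomial basis $\{1,y_j,y_jy_k\}$ (hence need Isserlis/Wick to invert the non-diagonal moment system), whereas the paper inserts the precomputed orthogonal projection formula of \cite[Prop.~3.14]{LasL20} and compares coefficients directly.
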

The proof of \Cref{thm:eqmo} is given in \Cref{sec:eqmo}. We observe that in terms 
of the classical Hamiltonian function $h$ defined in \eqref{eq:clham}, the equations of motion \eqref{eq:eqmo} can be rewritten as 
\begin{subequations}\label{eq:eqmo-general}
\begin{align+}
\dot{\pos} &=\langle \nabla_\mom \cham\rangle_\vsol,\label{eq:eqmo-general-q}\\
\dot{\mom} &= -\langle \nabla_\pos \cham\rangle_\vsol, \label{eq:eqmo-general-p}\\
\dot{\wm} &=-\langle \nabla_{\pos\pos} \cham\rangle_\vsol - \langle\nabla_{\pos\mom} \cham\rangle_\vsol \wm - \wm \langle\nabla_{\mom\pos} \cham \rangle_\vsol -\wm \langle\nabla_{\mom\mom} \cham\rangle_\vsol \wm, \\
\dot{\pha} &= -\langle \cham\rangle_\vsol +\frac{\scp}{4}\, \tr(B \ \IC^{-1}) + \mom^T \langle\nabla_{\mom} \cham\rangle_\vsol. \label{eq:eqmo-general-pha}
\end{align+}
\end{subequations}
with the matrix $B\in\C^{d\times d}$ given by
\[
B = \begin{pmatrix}\Id,\wm \end{pmatrix}\langle \nabla^2 \cham\rangle_\vsol 
	\begin{pmatrix}\Id\\ \wm\end{pmatrix}. 
\]
Later on, in \Cref{theo:eqmo-general-ham} we extend these findings to the variational dynamics 
induced by a general a subquadratic Hamiltonian. 

\begin{remark}
In order to solve \eqref{eq:eqmo} numerically, one might adapt the
Boris algorithm originally proposed in \cite{Boris70} and recently analyzed 
in \cite{HaiL20,HaiLW20}. This algorithm is constructed for
the classical equations of motion for charged particle systems.
Details or an efficient numerical algorithm are ongoing work which will be presented elsewhere.
\end{remark}
%

An alternative approach presented in \cite{LasL20} makes use of
a factorization of the width matrix $\wm$ due to Hagedorn.
For the magnetic \Schroed equation, it leads to differential equations for the factors of $\wm$ instead of \eqref{eq:eqmo_C}.
By \cite[Lemma~3.16]{LasL20}, we can write 
\begin{equation} \label{eq:Hag_fac}
\wm = \fcP \fcQ^{-1} \quad \mathrm{and}\quad \im \wm = (\fcQ\fcQ^*)^{-1},
\end{equation}
with complex, invertible, and symplectic matrices $\fcP$ and $\fcQ$. The latter means that for
\begin{equation}\label{eq:PQSym}
Y \coloneqq
\begin{pmatrix}
\re Q\quad&  \im Q\\
\re P \quad&\im P
\end{pmatrix}
\qquad
\text{and}
\qquad
J \coloneqq
\begin{pmatrix}
0& -\Id\\
\Id&0\\
\end{pmatrix}
\in \R^{2d \times 2d} 
\end{equation}  
it holds $Y^T JY =J$, or equivalently 
\begin{subequations} \label{eq:PQ}
\begin{align+}
\fcQ^T\fcP - \fcP^T\fcQ &= 0, \label{eq:PQ_transpose}\\
\fcQ^*\fcP - \fcP^*\fcQ &= 2\ii\, \mathrm{Id}. \label{eq:PQ_conjugate}
\end{align+}
\end{subequations}
In fact, if $\fcQ$ and $\fcP$ are complex matrices satisfying \eqref{eq:PQ}, then $\fcQ$ and $\fcP$ are invertible and the matrix $\wm = \fcP \fcQ^{-1}$ is symmetric with positive definite imaginary part $(\fcQ \fcQ^*)^{-1}$.
%
This allows us to write the \GWP \eqref{eq:gwp} as
\begin{equation}\label{eq:GWP_factorization}
\vsol(\cdot,x) = \exp \Bigl(\frac{\ii}{\scp}\Bigl(\frac{1}{2}(x-\pos)^T \fcP\fcQ^{-1}(x-\pos)+ \mom^T (x-\pos)+ \pha\Bigr)\Bigr)
\end{equation}
and to derive equations of motion for the parameters $(\pos, \mom, Q,P,\pha)$.

\begin{corollary}\label{thm:hagmotion} 
Let $\vsol_0$ satisfy \eqref{eq:gauss_initialdata} and be
	  given by the parameters $\pos_0, \mom_0$, $\wm_0, \pha_0$. Then the \GWP \eqref{eq:GWP_factorization} with parameters $(\pos,\mom,\fcQ,\fcP,\pha)$ solving 
	\begin{subequations}\label{eq:hageqmo}
		\begin{align+}
		\dot{\fcQ} &= \fcP - \l\langle J_\mgPot\r\rangle_\vsol \fcQ, \label{eq:eqmo_Q}\\
		\dot{\fcP} &= \langle J_\mgPot\rangle_\vsol^T \fcP + \frac{\scp}{2} \bigl\langle \nabla^2 \tr\l(J_\mgPot \RC \ICinv\r)\bigr\rangle_\vsol \fcQ 
		+
		\l\langle  D^2_{\mgPot,\mom}\r\rangle_\vsol \fcQ 
		 -
		  \bigl\langle \nabla^2 \mPot \bigr\rangle_\vsol \fcQ, \label{eq:eqmo_P}
		\end{align+} 
	\end{subequations}
	and \eqref{eq:eqmo_q}, \eqref{eq:eqmo_p}, and \eqref{eq:eqmo_pha} is the variational solution \eqref{eq:var} with initial data 
	\begin{equation}
	(\pos(0),\mom(0), \wm(0), \pha(0)) = (\pos_0, \mom_0, \wm_0, \pha_0).
	\end{equation}
	If the initial matrices $\fcQ_0$ and $\fcP_0$ are symplectic, then $\fcQ(t)$ and $\fcP(t)$ are symplectic for all times $t \in\R$.
\end{corollary}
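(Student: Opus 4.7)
The strategy is to derive the corollary from \Cref{thm:eqmo} by means of the factorization $\wm = \fcP\fcQ^{-1}$. I first set up a Gaussian $\vsol$ as in \eqref{eq:GWP_factorization} with parameters $(\pos, \mom, \fcQ, \fcP, \pha)$ solving \eqref{eq:eqmo_q}, \eqref{eq:eqmo_p}, \eqref{eq:eqmo_Q}, \eqref{eq:eqmo_P}, and \eqref{eq:eqmo_pha}, starting from initial data compatible with $(\pos_0, \mom_0, \wm_0, \pha_0)$ (one may take $\fcQ_0 = (\im \wm_0)^{-1/2}$ and $\fcP_0 = \wm_0 \fcQ_0$, which are symplectic and realize $\wm_0 = \fcP_0\fcQ_0^{-1}$, $\im \wm_0 = (\fcQ_0 \fcQ_0^*)^{-1}$). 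It then suffices to show that $\wm(t) \coloneqq \fcP(t)\fcQ(t)^{-1}$ satisfies \eqref{eq:eqmo_C}, because the remaining equations \eqref{eq:eqmo_q}, \eqref{eq:eqmo_p}, \eqref{eq:eqmo_pha} are identical in both systems; the uniqueness of ODE solutions applied to the closed system \eqref{eq:eqmo} from \Cref{thm:eqmo} then yields the claim.

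For this verification, I would differentiate $\wm = \fcP\fcQ^{-1}$ and use the identity $\dot{\fcQ}\fcQ^{-1} = \fcP\fcQ^{-1} - \langle J_\mgPot\rangle_\vsol = \wm - \langle J_\mgPot\rangle_\vsol$ obtained from \eqref{eq:eqmo_Q}. Inserting \eqref{eq:eqmo_P} and abbreviating
\begin{equation}
S \coloneqq \frac{\scp}{2}\bigl\langle \nabla^2 \tr(J_\mgPot^T \RC \ICinv)\bigr\rangle_\vsol + \langle D^2_{\mgPot,\mom}\rangle_\vsol - \langle \nabla^2 \mPot\rangle_\vsol,
\end{equation}
a direct computation gives
\begin{equation}
\dot{\wm} = \dot{\fcP}\fcQ^{-1} - \wm\,\dot{\fcQ}\fcQ^{-1} = \langle J_\mgPot\rangle_\vsol^T \wm + S - \wm\bigl(\wm - \langle J_\mgPot\rangle_\vsol\bigr),
\end{equation}
which is exactly \eqref{eq:eqmo_C}. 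Note that the invertibility of $\fcQ(t)$ along the flow is needed here; it will follow from the symplecticity preservation established next, together with the statement recalled below \eqref{eq:PQ}.

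For the preservation of symplecticity, I compute the time derivatives of $\fcQ^T\fcP - \fcP^T\fcQ$ and $\fcQ^*\fcP - \fcP^*\fcQ$ by direct substitution of \eqref{eq:eqmo_Q}--\eqref{eq:eqmo_P}. The crucial observation is that the matrix $\langle J_\mgPot\rangle_\vsol$ is real and $S$ is real and symmetric (by equality of mixed partials in $\nabla^2 \mPot$, $\nabla^2\tr(J_\mgPot^T \RC \ICinv)$ and the definition of $D^2_{\mgPot,\mom}$). Expanding,
\begin{align*}
\tfrac{\dd}{\dd t}(\fcQ^T\fcP - \fcP^T\fcQ) &= \fcP^T\fcP - \fcQ^T\langle J_\mgPot\rangle_\vsol^T\fcP + \fcQ^T\langle J_\mgPot\rangle_\vsol^T \fcP + \fcQ^T S \fcQ\\
&\quad - \fcP^T \langle J_\mgPot\rangle_\vsol \fcQ - \fcQ^T S^T \fcQ - \fcP^T\fcP + \fcP^T\langle J_\mgPot\rangle_\vsol\fcQ = 0,
\end{align*}
so \eqref{eq:PQ_transpose} is conserved, and an entirely analogous computation using that $\langle J_\mgPot\rangle_\vsol$ and $S$ are real (so that complex conjugate transpose equals transpose on them) shows that \eqref{eq:PQ_conjugate} is conserved as well.

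The main obstacle is not conceptual but bookkeeping: one must keep track of the symmetric versus generally nonsymmetric contributions on the \nounrhs of \eqref{eq:eqmo_P} carefully, since symplecticity preservation hinges precisely on the symmetry of $S$ and the absence of an asymmetric piece beyond the pairing $\langle J_\mgPot\rangle_\vsol^T \fcP$. Once this is verified, invertibility of $\fcQ(t)$ along the flow is automatic and the identification $\wm = \fcP\fcQ^{-1}$ is globally consistent.
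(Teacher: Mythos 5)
Your proposal is correct and follows essentially the same route as the paper: differentiate $\wm=\fcP\fcQ^{-1}$, insert \eqref{eq:hageqmo} to recover \eqref{eq:eqmo_C}, conclude by uniqueness, and verify conservation of the relations \eqref{eq:PQ} by direct differentiation, using that $\langle J_\mgPot\rangle_\vsol$ is real and the remaining block is real symmetric. You carry out the cancellations more explicitly than the paper and additionally record an admissible symplectic choice of $(\fcQ_0,\fcP_0)$ and the invertibility of $\fcQ(t)$, which the paper leaves implicit.
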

The proof of \Cref{thm:hagmotion} is given in \Cref{sec:eqmo}.

\subsection{Equations of motion in the limit $\scp\to 0$}\label{subsec:eqmo}

The classical Hamiltonian function \eqref{eq:clham} induces the non-autonomous classical Hamiltonian system
%
%
\begin{equation}\label{eq:cldiff}
\begin{aligned}
\begin{pmatrix}
\dot{\varone}(t)\\
\dot{\vartwo}(t)
\end{pmatrix}
&= J^{-1}\nabla \cham(t,\varone(t), \vartwo(t))
\\
&=
\begin{pmatrix}
\vartwo(t) -\mgPot(t,\varone(t))\\
J_\mgPot^T(t,\varone(t))\vartwo(t)
-\nabla \mPot(t,\varone(t))
\end{pmatrix}
\end{aligned}
%
%
\end{equation}
with initial data $\varone(s)=\varone_s,\vartwo(s)= \vartwo_s$ and with $J$ defined in \eqref{eq:PQSym}. 
\alt{Since $\mgPot(t,\varone)$ and $\mPot(t,\varone)$ are sublinear and subquadratic with respect to $\varone$, the classical Hamiltonian function $\cham(t,\varone,\vartwo)$ is subquadratic with respect to $(\varone,\vartwo)$. This provides a globally Lip\-schitz continuous 
	right-hand side for the ordinary differential equation \eqref{eq:cldiff}, and the Picard--Lindel\"of theorem guarantees a unique global solution.}
Since $\mgPot(t,\varone)$ and $\mPot(t,\varone)$ are sublinear and subquadratic with respect to $\varone$,
the right-hand side for the ordinary differential equation \eqref{eq:cldiff} is locally Lip\-schitz continuous. There is no \blowup, since
\begin{align}
\frac12\partial_t \big(\abs{\varone}^2 + \abs{\vartwo}^2\big) &= \varone^T (\vartwo - \mgPot(\varone)) + \vartwo^T (J_\mgPot^T(\varone)\vartwo -\nabla \mPot(\varone))\\
&\le C\big(1+\abs{\varone}^2 + \abs{\vartwo}^2\big),
\end{align}
where the constant $C>0$ depends on bounds of the potentials. 
By Gronwall's lemma, there is no finite time \blowup.
This provides the existence of a unique global solution.
The bound in \cite[Lemma~3.15]{LasL20} states that $\langle \cdot\rangle_\vsol$ tend to point evaluations at $\pos$ as $\scp \to 0$, i.e.,
$\langle \mgPot \rangle_\vsol \to \mgPot(\pos)$.
Hence, we observe that the magnetic equations of motion \eqref{eq:eqmo_q} and \eqref{eq:eqmo_p} tend to classical equations \eqref{eq:cldiff} as ${\scp \to 0}$ and \eqref{eq:eqmo_pha} to
\begin{align}
\dot{\pha}= \frac{1}{2}|\mom|^2  -  \mPot(\cdot,\pos).
\end{align}
In order to link the set of equations \eqref{eq:hageqmo} to classical mechanics, we consider the linearization of \eqref{eq:clham} along the position and momentum parameters $(\pos, \mom)$, i.e.,
\begin{equation} \label{eq:eqmo_lin_classic}
\begin{aligned}
\begin{pmatrix}
\dot{\fcQ}\\
\dot{\fcP}
\end{pmatrix}
&= J^{-1}\nabla^2 \cham(\cdot,\varone,\vartwo)
\begin{pmatrix}
\fcQ\\
\fcP
\end{pmatrix} \\
&=\begin{pmatrix}
\fcP - J_\mgPot(\cdot,\varone)\fcQ\\
\left(D^2_{\mgPot(\cdot,\varone), \vartwo}  - \nabla^2 \mPot(\cdot,\varone)\right)\fcQ + J_\mgPot(\cdot,\varone)^T \fcP
\end{pmatrix}.
\end{aligned}
\end{equation}
By the same reasoning, we observe that the
equations \eqref{eq:hageqmo} tend to the linearized equations classical equations \eqref{eq:eqmo_lin_classic} as ${\scp \to 0}$. 

%
%
%
%


\subsection{Averages} \label{subsec:averages}

A further remarkable property of \GWPs is the conservation of several physical quantities. In the following, we recall the definitions of the linear and angular momentum for quantum dynamical systems.

Let $x = (x_1, \ldots, x_N)$, where $x_k \in \R^3,\, k = 1,\ldots, N$
and $d = 3N$, be position variables. We recall the follwoing definition given in \cite[Chapter~3]{LasL20}.

\begin{definition} \label{def:momentum_op}
	\begin{enumerateletters}
		\item The quantum mechanical total linear momentum operator is given by
		\begin{equation}
		\linMom := -\ii \scp\sum_{k=1}^N{ \nabla_{x_k}}.
		\end{equation}
		\item The quantum mechanical total angular momentum operator is given by
		\begin{equation}
		\angMom := \sum_{k=1}^N{x_k \times \left( -\ii \scp \nabla_{x_k}\right)} = -\ii \scp \sum_{k=1}^N\begin{pmatrix}
		x_{k_2}\partial_{k_3}-x_{k_3}\partial_{k_2}\\
		x_{k_3}\partial_{k_1} - x_{k_1}\partial_{k_3}\\
		x_{k_1}\partial_{k_2} -x_{k_2}\partial_{k_1}
		\end{pmatrix}. 
		\end{equation}
%
	\end{enumerateletters}
\end{definition}
%
	Next, we state sufficient conditions on the potentials $\mgPot$ and $\mltPot$, which lead to the conservation of
averages of the observables from \Cref{def:momentum_op}. 

\begin{definition}\label[definition]{def:pot_trans_rot}
We call a potential $W = (W_j)_{j=1, \ldots, \dim}: (\R^3)^N \to \R^\dim$
\begin{enumerateletters}
	\item translation invariant, if 
	\begin{equation}
	W_j(x_1, \ldots ,x_N) = W_j(x_1 + r, \ldots , x_N + r),
	\end{equation}
	for all $r\in \R^3$ and $j= 1, \ldots, \dim$,
	
	\item rotation invariant if for all orthogonal matrices $R \in \R^{3\times 3}$ with $\det R =1$ it holds
	\begin{equation}
	W_j(x_1, \ldots , x_N) = W_j(R x_1 , \ldots , R x_N ),
	\end{equation}
	where $j=1, \ldots, \dim$.
\end{enumerateletters}
\end{definition}

In the next lemma we provide a representation for the energy and state conservation properties of the momenta.

\begin{lemma}\label[lemma]{lem:norm_energy_mom}
	The following assertions hold.
	\begin{enumerateletters}
		\item We have $\normLtwo{\sol(t)} = \normLtwo{\vsol(t)}  = \normLtwo{\vsol_0}$ for all $t\in\R$.
		\item If the potentials $\mgPot$ and $\mltPot$ are both \timeindependent, then
		\begin{equation}
		\langle \Ham \rangle_{\sol(t)} = \langle \Ham \rangle_{\sol_0}\quad\text{ and }\quad\langle \Ham \rangle_{\vsol(t)} = \langle \Ham 	\rangle_{\vsol_0}.
		\end{equation}
		%
		%
		%
		%
		\item For $\varphi = \sol, \vsol$ the energy $\langle \Ham \rangle_\varphi$ is given by
		\begin{align}
		\langle \Ham(t)\rangle_{\varphi(t)} &= \langle  \Ham(0)\rangle_{\varphi(0)} + \int_0^t{ \bigl\langle  \ii\scp \partial_s \mgPot(s)\cdot \nabla \bigr\rangle_{\varphi(s)} + \bigl\langle\partial_s\mPot(s)\bigr\rangle_{\varphi(s)} }\,\dd s .
		\end{align}
		\item For $\linMom$ and $\angMom$ from \Cref{def:momentum_op} we have:
		\begin{itemize}
			\item[(i)] If $\mltPot$ and $\mgPot = (\mgPot_j)_{j=1}^\dim$ given in  \Cref{ass:onpotentials} are invariant under translations
			%
			\begin{equation}
			\langle \linMom \rangle_{\sol(t)} = \langle \linMom \rangle_{\sol_0}
			\quad\text{ and }\quad		
			\langle \linMom \rangle_{\vsol(t)} = \langle \linMom \rangle_{\vsol_0}.
			\end{equation}
			
			\item[(ii)] If $\mPot$ defined in \eqref{eq:hamop} is invariant under rotations and $\mgPot(\cdot,x)= \alpha(\cdot) x$ for some $\alpha(\cdot) \in \R$, then
			\begin{equation}
						\langle \angMom \rangle_{\sol(t)} = \langle \angMom \rangle_{\sol_0}
						\quad\text{ and }\quad
						\langle \angMom \rangle_{\vsol(t)} = \langle \angMom \rangle_{\vsol_0}.
			\end{equation}
		\end{itemize}
	\end{enumerateletters}
\end{lemma}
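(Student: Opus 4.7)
All four parts follow a common Ehrenfest-type strategy: for a (possibly time-dependent) self-adjoint operator $\obs$, time differentiation yields
\[
\pt \langle \obs\rangle_\varphi = \langle (\pt \obs)\varphi|\varphi\rangle + \langle \pt \varphi|\obs \varphi\rangle + \langle \obs \varphi|\pt \varphi\rangle,
\]
after which one substitutes either \eqref{eq:sproblem} for $\varphi=\sol$ or the variational identity \eqref{eq:varskp} for $\varphi=\vsol$. The delicate point in the variational case is that \eqref{eq:varskp} is only available for test vectors $v\in\Tst$, which by \Cref{lem:tangent} are polynomials of degree at most $2$ multiplied by $\vsol$. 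Depending on the operator $\obs$ we either verify $\obs \vsol\in\Tst$ and take $v=\obs \vsol$, or we exploit that $\pt \vsol\in\Tst$ is built into \eqref{eq:varskp} and take $v=\pt \vsol$.

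For part~(a) I pick $\obs=\Id$. The $\sol$-statement is the unitarity in \Cref{thm:wellposedness}, while for $\vsol$ the constant $1$ is an admissible polynomial, so $\vsol\in\Tst$; inserting $v=\vsol$ into \eqref{eq:varskp} combined with $\langle \Ham \vsol|\vsol\rangle\in\R$ forces $\re\langle \vsol|\pt \vsol\rangle=0$ and hence $\pt \normLtwo{\vsol}^2=0$. For parts~(b) and~(c) I take $\obs=\Ham(t)$, with $\pt \Ham=\ii\scp(\pt \mgPot)\cdot\nabla+\pt \mPot$. The two Schrödinger-type terms cancel: for $\varphi=\sol$ directly by \eqref{eq:sproblem} and self-adjointness of~$\Ham$; for $\varphi=\vsol$ the operator $\Ham \vsol$ is \emph{not} in $\Tst$ (since $\mPot$ is not polynomial), so I instead test \eqref{eq:varskp} against $v=\pt \vsol\in\Tst$, which gives $\langle \Ham \vsol|\pt \vsol\rangle=-\ii\scp\normLtwo{\pt \vsol}^2\in\ii\R$, again killing the real part. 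Integration over $[0,t]$ yields the identity in~(c), and~(b) is the special case $\pt \Ham=0$.

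For part~(d) I aim at the Ehrenfest identity
\[
\pt \langle \obs\rangle_\varphi = \tfrac{\ii}{\scp}\langle [\Ham,\obs]\rangle_\varphi, \qquad \obs\in\{\linMom,\angMom\},
\]
which is classical for $\varphi=\sol$. For $\varphi=\vsol$, a direct computation on the Gaussian~\eqref{eq:gwp} gives $\linMom \vsol=(\wm(x-\pos)+\mom)\vsol$, a degree-$1$ polynomial times $\vsol$, and analogously $\angMom \vsol$ is a degree-$2$ polynomial times $\vsol$; both therefore lie in $\Tst$ by \Cref{lem:tangent}. Testing \eqref{eq:varskp} against $v=\obs \vsol$ and invoking self-adjointness of $\linMom$, $\angMom$, and $\Ham$ yields the displayed identity. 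It then remains to verify the commutators: $[\Ham,\linMom]=0$ whenever $\mgPot$ and $\mPot$ are translation invariant, since $\linMom$ commutes with $\Delta$, with translation-invariant multiplications, and with $\mgPot\cdot\nabla$; and $[\Ham,\angMom]=0$ when $\mPot$ is rotation invariant and $\mgPot(\cdot,x)=\alpha(\cdot)x$, since $\angMom$ commutes with $\Delta$ and with rotation-invariant multiplications, while the magnetic drift reduces to $\ii\scp\alpha(t)\,x\cdot\nabla$ and commutes with each $x_k\partial_l$ by the elementary identity $[x_k\partial_l,\,x\cdot\nabla]=0$.

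The main technical obstacle is this last rotational case: for a general rotation-invariant $\mgPot$, the magnetic drift would not commute with $\angMom$, and the radial ansatz $\mgPot(\cdot,x)=\alpha(\cdot)x$ is precisely the structural condition that makes the remaining commutator vanish. Everything else is a bookkeeping exercise based on \Cref{lem:tangent} and the self-adjointness of the operators involved.
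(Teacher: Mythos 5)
Your proposal is correct and follows essentially the same route as the paper: parts (a)--(c) are the standard Dirac--Frenkel conservation arguments (testing \eqref{eq:varskp} with $v=\vsol$ resp.\ $v=\pt\vsol$), and part (d) reduces to checking that $\Ham(t)$ commutes with $\linMom$ and $\angMom$, which is exactly the paper's strategy via the cited conservation principle of \cite{Lub08_book,FaoL06} --- you merely re-derive that principle by testing with $v=\obs\vsol\in\Tst$ instead of citing it, and you verify sufficiency of $\mgPot(\cdot,x)=\alpha(\cdot)x$ directly via $[x_k\partial_l,\,x\cdot\nabla]=0$ rather than deriving the paper's if-and-only-if condition, which is all the lemma requires.
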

The proof of \Cref{lem:norm_energy_mom} is given in \Cref{sec:expv}.

\subsection{\texorpdfstring{$L^2$-error bound}{L2-error bound}}

In this section, we present the approximation property of the Gaussian wave packet \wrt the $L^2$-norm.
Since our error bounds depend on parameters characterizing the
	  Gaussian wave packet in \eqref{eq:gwp}, we first consider
	   the boundedness of these parameters up to a fixed but arbitrary finite time $\etime>0$ specified by \ODE-theory.

\begin{lemma}\label[lemma]{lem:bddparam}
For all times $\etime >0$, the set of equations \eqref{eq:eqmo} is \wellpoAdj on $[0,\etime]$ independently of $\scp$. Furthermore, the solution parameters are bounded independently of $\scp$, i.e.
\begin{equation}
|\param|\leq c_{\param_0}, \qquad \text{ for all }~ \param \in \{\pos, \mom, \wm, \pha\},
\end{equation}
uniformly on $[0,\etime]$, where $c_{\param_0}$ depends on the parameters of the initial Gaussian $\vsol_0$, on the potentials $\mltPot, \mgPot$, and on $\etime$.
%
%
%
	%
	%
\end{lemma}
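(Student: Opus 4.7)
The plan is to combine a Picard--Lindel\"of local existence result with a priori bounds derived from the structural properties of \eqref{eq:eqmo}. First, the right-hand side of \eqref{eq:eqmo} consists of Gaussian averages $\langle g\rangle_\vsol$ for smooth functions $g$ built from $\mgPot, \mltPot$ and their derivatives. By \Cref{ass:onpotentials} the relevant derivatives are bounded, and standard differentiation under the integral shows that the right-hand side is smooth, in particular locally Lipschitz, in the parameters $(\pos, \mom, \wm, \pha)$ on the open set where $\IC$ is positive definite. Picard--Lindel\"of therefore yields local existence and uniqueness.

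To extend the solution to the fixed interval $[0, \etime]$, I would pass to the Hagedorn factorization $\wm = \fcP\fcQ^{-1}$ from \Cref{thm:hagmotion}, which converts the Riccati-type equation \eqref{eq:eqmo_C} into the linear system \eqref{eq:hageqmo} for $(\fcQ, \fcP)$. Along the flow the symplecticity conditions \eqref{eq:PQ} are preserved, and testing $\fcQ^*\fcP - \fcP^*\fcQ = 2\ii\,\Id$ against unit vectors gives the quantitative bound $|\fcQ^{-1}| \le |\fcP|$. Together with $\ICinv = \fcQ\fcQ^*$, this provides $|\ICinv| \le |\fcQ|^2$ and $|\wm| = |\fcP\fcQ^{-1}| \le |\fcP|^2$, so the task reduces to bounding $|\fcQ|$ and $|\fcP|$.

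Using the sublinearity of $\mgPot$ and the subquadraticity of $\mltPot$, each Gaussian average in \eqref{eq:hageqmo} is controlled by standard moment estimates for the Gaussian density of $\vsol$ centered at $\pos$ with covariance $\tfrac{\scp}{2}\ICinv$. The key ingredient is the identity $\RC\,\ICinv = \fcP\fcQ^* - \ii\,\Id$, an immediate consequence of the factorization $\wm = \fcP\fcQ^{-1}$ together with $\IC = (\fcQ\fcQ^*)^{-1}$, which bounds the otherwise problematic term $\tfrac{\scp}{2}\langle \nabla^2 \tr(J_\mgPot^T \RC\,\ICinv)\rangle_\vsol$ in the $\fcP$-equation by $c\,\scp(|\fcP||\fcQ| + 1)$. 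Setting $\phi(t) := 1 + |\pos|^2 + |\mom|^2 + |\fcQ|^2 + |\fcP|^2$, the combined estimates yield a Bernoulli-type differential inequality of the form $\dot\phi \le c_1\phi + c_2\,\scp\,\phi^2$, which for $\scp \in (0,1]$ can be closed by a standard bootstrap argument on $[0, \etime]$, giving $\phi(t) \le C_\etime$ uniformly in $\scp$. A bound on $|\pha|$ then follows by integrating \eqref{eq:eqmo_pha}, whose right-hand side is controlled by the already-bounded quantities.

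The main obstacle is precisely the cubic contribution $\scp|\fcP||\fcQ|^2$ that the $\scp$-correction produces in $\dot\fcP$: it forces the Bernoulli form of the Gronwall inequality rather than a linear one, and care is needed to close the bootstrap uniformly in $\scp$. The crucial observation is that passing to $\psi := 1/\phi$ turns the inequality into $\dot\psi \ge -c_1\psi - c_2\scp$, which is linear and can be integrated by a Duhamel argument; the resulting lower bound on $\psi$ (and hence upper bound on $\phi$) involves the constants from the potentials, the initial data, and $\etime$, but is independent of $\scp$ as $\scp \to 0$.
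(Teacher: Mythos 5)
Your proposal is correct in outline but takes a genuinely different route from the paper. The paper's proof stays entirely with the $(\pos,\mom,\wm,\pha)$ system: its main technical content is showing that the right-hand side of \eqref{eq:eqmo} is locally Lipschitz with a constant \emph{independent of $\scp$}, by differentiating the averages $\langle\refPot\rangle_\vsol$ with respect to $\pos$ and $\IC$. The subtlety there --- which your phrase ``standard differentiation under the integral'' glosses over --- is that $\nabla|\vsol|^2$ and $\partial_\IC|\vsol|^2$ each carry a factor $1/\scp$; the paper removes it by integration by parts (for $\partial_\pos$) and by the moment bound of \Cref{lem:moments} (for $\partial_\IC$, where $\frac1\scp\langle(x-\pos)(x-\pos)^T\refPot\rangle_\vsol=\O(1)$ on bounded parameter sets). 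Absence of blow-up is then only asserted ``as in \Cref{subsec:eqmo}''. You instead put the weight on a priori bounds via the Hagedorn factorization: the estimates $\|\fcQ^{-1}\|\le\|\fcP\|$ from \eqref{eq:PQ_conjugate}, $\ICinv=\fcQ\fcQ^*$, and the identity $\RC\ICinv=\fcP\fcQ^*-\ii\,\Id$ are all correct and reduce everything to bounding $\|\fcQ\|,\|\fcP\|$. This is arguably more honest about the Riccati structure of \eqref{eq:eqmo_C}, which a naive energy estimate on $\wm$ does not control, and since the continuation to $[0,\etime]$ is then driven by the a priori bound rather than by the Lipschitz constant, the $\scp$-uniformity of the latter becomes inessential for your argument.

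Two caveats. First, your Bernoulli inequality $\dot\phi\le c_1\phi+c_2\scp\phi^2$ closes only for $\scp$ below a threshold depending on $\etime$, the initial data and the potentials: the inverted inequality gives $\psi(t)\ge \e^{-c_1 t}\psi(0)-c_2\scp t$, which stays positive on $[0,\etime]$ only if $\scp<\e^{-c_1\etime}/\bigl(c_2\,\etime\,\phi(0)\bigr)$. This is consistent with the standing assumption $0<\scp\ll1$, but it is strictly weaker than a bound uniform over all of $(0,1]$ and should be stated. Second, the Gaussian moment bounds you invoke scale like $(\scp/\rho)^{m/2}$ with $\rho\ge\|\fcQ\|^{-2}$, so the averages of the sublinear and subquadratic potentials themselves feed $\|\fcQ\|$ back into the estimate (e.g.\ $|\langle\nabla\mPot\rangle_\vsol|\le C(1+|\pos|+\sqrt\scp\,\|\fcQ\|)$); this is harmless for your choice of $\phi$, but it needs to be tracked explicitly rather than absorbed into ``standard moment estimates''.
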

%

We note that by \Cref{thm:hagmotion} the matrix $\IC$ is real symmetric, positive definite for all times $t$. To formulate the following results, we denote by $\rho >0$ a lower bound on the smallest eigenvalue of $\IC$ on the finite time horizon $[0,\etime]$. For a discussion of relevant time scales on which $\rho$ is sufficiently large compared to $\eps$, called the Ehrenfest time, we refer to \cite[Sec.~3.6]{LasL20}. 
With this, 
we can state our approximation result.
%
\begin{theorem} \label{thm:L2} 
	Let $\sol, \vsol$ be the solution of \eqref{eq:sproblem} and \eqref{eq:var}, respectively,
	and let $\vsol_0$ satisfy \eqref{eq:gauss_initialdata}. 
	Then the error bound 
	\begin{equation}
	\normLtwo{\sol(t) -\vsol(t)} \leq t c \sqrt{\varepsilon}, \qquad t  \in [0,T],
	\end{equation}
	holds with a constant $c$ which depends on $\rho$, the bounds on the parameters from
	 \Cref{lem:bddparam}
	 and on the potentials, but is independent of $\varepsilon$ and $t$.
\end{theorem}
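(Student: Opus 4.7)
The plan is to follow the standard variational approximation argument of \cite[Ch.~II]{Lub08_book} and \cite[Sec.~3.6]{LasL20}, adapted to the magnetic Hamiltonian \eqref{eq:hamop}. The strategy is to express the error as the solution of a perturbed Schr\"odinger equation driven by a projection defect, and then to show that this defect is of order $\eps^{3/2}$ in the $L^2$-norm; the $\sqrt\eps$ rate then follows by integration.

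First I would set up the error equation. Let $e(t) := \sol(t)-\vsol(t)$. Using \eqref{eq:var} in the form $\ii\eps\pt\vsol = \Prt(\Ham(t)\vsol)$, one finds
\[
\ii\eps\pt e = \Ham(t)\,e + d(t),\qquad d(t) := (\Id-\Prt)\bigl(\Ham(t)\vsol(t)\bigr),\qquad e(0)=0.
\]
Since $\Ham(t)$ is symmetric on a suitable domain, the standard energy identity gives $\tfrac{\dd}{\dd t}\normLtwo{e(t)}^2 \leq \tfrac{2}{\eps}\normLtwo{e(t)}\normLtwo{d(t)}$ and therefore
\[
\normLtwo{e(t)} \leq \tfrac{1}{\eps}\int_0^t\normLtwo{d(s)}\,\dd s.
\]
The problem is thereby reduced to establishing the pointwise defect bound $\normLtwo{d(s)} \leq C\eps^{3/2}$ uniformly on $[0,\etime]$.

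Next I would bound the defect via best approximation. Since $\Prt$ is the orthogonal projector onto $\Tst$, one has $\normLtwo{d(s)} \leq \normLtwo{\Ham(s)\vsol(s)-v}$ for any $v\in T_{\vsol(s)}\Mf$. Using $\nabla\vsol = \tfrac{\ii}{\eps}(\wm(x-\pos)+\mom)\vsol$, the kinetic term $-\tfrac{\eps^2}{2}\Delta\vsol$ is $\vsol$ times a polynomial of degree $\leq 2$ and therefore already lies in $T_{\vsol(s)}\Mf$ by \Cref{lem:tangent}. For the remaining contributions $\ii\eps\mgPot\cdot\nabla\vsol = -\mgPot(x)\cdot(\wm(x-\pos)+\mom)\vsol$ and $\mPot\vsol$, I would Taylor-expand $\mgPot$ and $\mPot$ around $\pos(s)$ to second order with third-order remainder; by \Cref{ass:onpotentials} the remainders satisfy $|R_\mgPot(y)|,|R_\mPot(y)| \leq C|y|^3$. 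After collecting every piece of polynomial degree $\leq 2$ in $(x-\pos)$ into the tangent-space representative $v$, the only contributions to $\Ham(s)\vsol-v$ that survive are $\vsol$ multiplied by functions of growth at least cubic in $|x-\pos|$: namely $R_\mPot(x-\pos)$, the cubic polynomial produced when the quadratic Taylor term of $\mgPot$ is paired with $\wm(x-\pos)$ from $\nabla\vsol$, and the products $R_\mgPot(x-\pos)\cdot\wm(x-\pos)$ and $R_\mgPot(x-\pos)\cdot\mom$. Each of these is estimated in $L^2$ via the Gaussian moment inequality
\[
\int_{\R^\dim}|x-\pos|^{2k}|\vsol(x)|^2\,\dx \leq C_k\,\eps^{k},
\]
where $C_k$ is controlled by $\normempty{\IC^{-1}}$ and hence by $\rho^{-1}$. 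Since every surviving term carries a factor of at least $|x-\pos|^3$, its $L^2$-norm is of order $\eps^{3/2}$ (the term containing $R_\mgPot\cdot\wm(x-\pos)$ is even of order $\eps^2$). Combined with the uniform parameter bounds from \Cref{lem:bddparam}, this yields $\normLtwo{d(s)}\leq C\eps^{3/2}$ with $C$ independent of $\eps$ and $t$.

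The main obstacle is the Taylor-expansion bookkeeping for the magnetic term: the $\eps^{-1}$ appearing upon differentiating the Gaussian combines with the $\eps$ in front of $\nabla$ to produce an $O(1)$ multiplication operator, so the Taylor remainder of $\mgPot$ must be controlled jointly with the linear factor $\wm(x-\pos)+\mom$, and the cubic polynomial emerging from the quadratic Taylor term of $\mgPot$ requires separate treatment since it is not of Taylor-remainder type. The uniform-in-$\eps$ Gaussian moment estimates also rely crucially on the lower bound $\rho>0$ on the eigenvalues of $\IC$, which is exactly where the hypothesis on $\rho$ enters.
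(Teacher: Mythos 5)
Your proposal is correct and follows essentially the same route as the paper: reduce to $\normLtwo{\sol(t)-\vsol(t)}\le \frac1\eps\int_0^t\normLtwo{(\Id-P_{\vsol})(\Ham\vsol)}\,\dd s$, observe that the kinetic part is already tangent, Taylor-expand the magnetic and electric contributions to second order around $\pos$, and control the cubic-and-higher leftovers with the Gaussian moment bound of \Cref{lem:moments}. The only cosmetic difference is that the paper expands the combined function $\advPot=-\mgPot\cdot(\wm(x-\pos)+\mom)$ and then handles the linear growth of its third derivatives, whereas you expand $\mgPot$ first and track the product with $\wm(x-\pos)+\mom$; both correctly isolate the same $\mathcal O(\eps^{3/2})$ defect.
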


	We provide the 
	details and the proof
	of the theorem
	in \Cref{sec:L2}.

\subsection{Observable error bound}

In classical mechanics physical states are described by the position and momentum parameters $\varone, \vartwo \in \R^{\dim}$. Observables are functions depending smoothly on $(\varone, \vartwo ) \in \R^{\dim}\times\R^{\dim}$, see, for example, \cite{Hal13_book,Wal07_book}. Classical mechanics can be linked to quantum mechanics via \Weylquantization, which asigns a classical observable to a quantum mechanical one using \semiclassical Fourier transformation, cf.~\cite[Thm.~4.14]{EvaZ07_book} or \cite{Mar02_book,Hal13_book}. Formally, for $ \varphi \in \SchF(\R^\dim)$ and an observable $\clobs$, we define
\begin{align}\label{eq:Weyl_def}
\op(\clobs)\varphi(x) &\coloneqq \frac{1}{(2\pi\scp)^{d}}\int_{\R^{2\dim}}{\clobs\Bigl(\frac{x+\varone}{2},\vartwo\Bigr)\e^{\ii \vartwo \cdot(x-\varone)/\scp}\varphi(\varone)}\,\dd(\varone,\vartwo).
%
\end{align}
 The \Weylquantization of the projections to the first or second component of the classical variables are
\begin{align}
\op(\vartwo)\varphi = -\ii\varepsilon \nabla \varphi\quad\text{ and } \quad\op(\varone)\varphi = x\varphi.
\end{align}
Further examples of physically relevant observables stemming from classical symbols are 
\begin{align}
\op(|\vartwo|^2)\sol(x) &= -\scp^2\Delta \sol(x)
\end{align}
and, due to $\divergence \mgPot = 0$, 
\begin{align}
\op(\mgPot(\varone)\cdot \vartwo)\sol(x) 
&= \tfrac12\left(\mgPot(x) \cdot(-\ii\scp \nabla) + (-\ii\scp\nabla)\cdot \mgPot(x)\right) \sol(x)\\
&= \left(\mgPot(x) \cdot(-\ii\scp \nabla)\right) \sol(x),
\end{align}
and, of course, 
\[
\op(\cham(t))\sol(x) = \Ham(t)\sol(x)
\]
for the Hamiltonian function \eqref{eq:clham} and the magnetic Schr\"odinger operator \eqref{eq:ham}.
An observable $\obs= \op(\clobs)$ defines for an $L^2$-normalised function 
$\varphi\in\SchF(\R^\dim)$ an expectation value,
\[
\bigl\langle \varphi| \obs \varphi \bigr\rangle = \int_{\R^\dim} \overline{\varphi(x)} (\obs\varphi)(x) \dd x,
\]
and we investigate how expectation values issued by the variational approximation~$\vsol(t)$ differ from the ones of the true solution $\sol(t)$. For an error estimate relying on $L^2$ bounds, we have to restrict ourselves to sublinear classical observables.


\begin{definition}\label{def:admissible-symbols}
	The class of sublinear classical symbols is defined as smooth functions $\clobs : \R^{2\dim} \to \R$ such that for $\alpha \in \N^{2\dim}_0$ with $\abs{\alpha} \ge 1$ there exists $C_\alpha>0$
\begin{equation}
\abs{\partial^\alpha \clobs(\varone,\vartwo)} \le C_\alpha
\end{equation}
for all $(\varone,\vartwo)\in\R^\dim\times\R^\dim$.
\end{definition}

For the expectation values of classical sublinear observables, we obtain the following error 
estimate that generalizes and improves the findings of N.~King and T.~Ohsawa \cite{KinO20,Ohs21}, where asymptotic accuracy of the order $\scp^{3/2}$ has been observed and proved for the variational position and momentum expectation value.
\begin{theorem} \label{thm:obs} 
	Let $\sol, \vsol$ be the solution of \eqref{eq:sproblem} and \eqref{eq:var}, respectively,
	and let $\vsol_0$ satisfy \eqref{eq:gauss_initialdata}.
%
Moreover, let $\obs= \op(\clobs)$ be an observable stemming from a classical sublinear observable $\clobs$ in the sense of \Cref{def:admissible-symbols} 
such that $\egor{t}{s}$ is sublinear.
%
	Then we have the error bound 
	\begin{equation}
	\bigl|\bigl\langle \sol(t)| \obs \sol(t) \bigr\rangle - \bigl\langle \vsol(t)| \obs \vsol(t) \bigr\rangle \bigr|\leq 
	t\,c\,\scp^2, 
	\end{equation}
	%
for all $t\in[0,T]$. The error constant $c$ depends on the parameter bounds of \Cref{lem:bddparam} for the time-interval $[0,T]$, in particular on 
the bounds for the width matrix $\wm$, on the potentials, and on $\clobs$, but is independent of $\scp$ and~$t$. 
\end{theorem}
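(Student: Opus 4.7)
The plan is to follow an Egorov-type strategy, adapting the non-magnetic analysis of \cite{LasL20,Ohs21} to the time-dependent magnetic setting and to general sublinear observables. Setting $\obs(t,s):=U(t,s)^{*}\obs\, U(t,s)$, unitarity of $U(t,s)$ and the initial matching $\vsol(0)=\sol_{0}$ give the telescoping identity
\begin{equation*}
\bigl\langle \sol(t) \mid \obs\sol(t) \bigr\rangle - \bigl\langle \vsol(t) \mid \obs\vsol(t) \bigr\rangle = -\int_{0}^{t}\frac{\dd}{\dd s}\bigl\langle \vsol(s) \mid \obs(t,s)\vsol(s) \bigr\rangle\,\ds.
\end{equation*}
Combining the Heisenberg equation $\partial_{s}\obs(t,s) = \tfrac{\ii}{\scp}[\obs(t,s),\Ham(s)]$, the variational equation \eqref{eq:var}, and the self-adjointness of $\obs(t,s)$ simplifies the integrand to $\tfrac{2}{\scp}\,\im\bigl\langle \vsol(s) \mid \obs(t,s)(\Id-\Prt)\Ham(s)\vsol(s)\bigr\rangle$, which is the observable analogue of the defect identity underlying the $L^{2}$-analysis.

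Next I invoke an Egorov-type theorem for the magnetic evolution to write $\obs(t,s) = \op(\egor{t}{s}) + \scp^{2}R_{t,s}$, with $R_{t,s}$ bounded uniformly in $s\in[0,T]$ on the relevant weighted $L^{2}$-space. Sublinearity of $\egor{t}{s}$ then permits a Taylor expansion around the classical center $(\pos(s),\mom(s))$ of $\vsol(s)$ in the form $\egor{t}{s}=T_{2}^{(s)}+r_{t,s}$, where $T_{2}^{(s)}$ is the quadratic Taylor polynomial at $(\pos(s),\mom(s))$. By \Cref{lem:tangent}, the Weyl quantization of any polynomial of degree at most $2$ in $(\varone,\vartwo)$ maps the Gaussian $\vsol(s)$ into $\Tst$, so $\op(T_{2}^{(s)})\vsol(s)\in\Tst$. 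Since $(\Id-\Prt)\Ham(s)\vsol(s)\perp\Tst$ by construction, the quadratic Taylor contribution cancels, and only $\op(r_{t,s})$ and the Egorov remainder survive.

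An integral form of the Taylor remainder together with sublinearity of $\egor{t}{s}$ yields the pointwise bound $|r_{t,s}(\varone,\vartwo)|\le C\,|(\varone-\pos(s),\vartwo-\mom(s))|^{3}$ with constants uniform on $[0,T]$. Since each factor $\varone-\pos(s)$ or $\vartwo-\mom(s)$ contributes $\O(\scp^{1/2})$ when Weyl-quantized and applied to the Gaussian $\vsol(s)$, standard Gaussian moment estimates give $\normLtwo{\op(r_{t,s})\vsol(s)}=\O(\scp^{3/2})$. Coupled with the defect bound $\normLtwo{(\Id-\Prt)\Ham(s)\vsol(s)}=\O(\scp^{3/2})$ from the $L^{2}$-analysis of \Cref{thm:L2}, the Cauchy--Schwarz inequality combined with the prefactor $\tfrac{1}{\scp}$ produces an integrand of order $\O(\scp^{2})$, while the Egorov remainder $\scp^{2}R_{t,s}$ contributes only $\O(\scp^{5/2})$. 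Integration on $[0,t]$ yields the claim.

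The main obstacle is the third-moment estimate $\normLtwo{\op(r_{t,s})\vsol(s)}=\O(\scp^{3/2})$ in quantitative, $s$-uniform form. This requires bounds on derivatives of the transported symbol $\egor{t}{s}$ of a fixed order, which in turn demand bounds on the classical magnetic flow $\flowts{t}{s}$ and its derivatives on $[0,T]$, together with the translation of the resulting pointwise bounds into $L^{2}$-estimates on $\vsol(s)$, most cleanly done via the Fourier--Bargmann (or metaplectic) representation of the Gaussian. A secondary but nontrivial point is the Egorov theorem itself with $\O(\scp^{2})$ remainder for the time-dependent magnetic Hamiltonian, which can be established by standard pseudodifferential techniques along the lines of \cite{RobC21_book}, exploiting the subquadratic classical Hamiltonian $\cham$ and the sublinearity of $\clobs$.
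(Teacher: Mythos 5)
Your proposal is correct, and although it shares the paper's scaffolding --- the a posteriori representation of the error through the Heisenberg-evolved observable $\hobs(t,s)$, the time-dependent Egorov theorem with $\O(\scp^2)$ remainder, and the defect bound $\normLtwo{(\Id-\Pr)\Ham\vsol}=\O(\scp^{3/2})$ from the $L^2$-analysis --- the mechanism you use to extract the final power of $\scp$ is genuinely different from the paper's. The paper realizes the defect as a multiplication potential $\remPot_\vsol\vsol=(\Id-\Pr)\Ham\vsol$, Moyal-expands the commutator with $\op(\egor{t}{s})$, and then verifies order by order that the resulting Gaussian averages ($\langle\im\remPot_\vsol\,\auxobs\rangle_\vsol$, $\langle\{\re\remPot_\vsol,\auxobs\}\rangle_\vsol$, \dots) carry the required powers of $\scp$; this relies on the explicit Taylor-minus-average structure of $\remPot_\vsol$ in \eqref{eq:remPot}, the Wigner moment expansions of \Cref{prop:av_gauss} and \Cref{cor:av_gauss}, and the Isserlis resummation of \Cref{lem:sum}. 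You instead exploit the Galerkin orthogonality directly: since $\egor{t}{s}$ is sublinear, $\op(\egor{t}{s})\vsol(s)$ equals a quadratic polynomial prefactor times $\vsol(s)$ --- an element of $\Tst$ by \Cref{lem:tangent} --- up to $\O(\scp^{3/2})$ in $L^2$ (the same Combescure--Robert expansion the paper invokes for $\Ham\vsol$ in part (b) of the proof of \Cref{thm:L2}); a single Cauchy--Schwarz against the defect, which is orthogonal to $\Tst$, then yields $\O(\scp^{3})$ for the inner product and $\O(\scp^2)$ for the integrand after the $1/\scp$ prefactor. This route is shorter, bypasses the moment combinatorics entirely, and makes transparent both why the variational Gaussian beats the semiclassical one (whose defect is not tangent-orthogonal) and why truncating the Taylor polynomial at degree $0$ or $1$ reproduces the weaker $\O(\scp)$ and $\O(\scp^{3/2})$ rates of the earlier literature. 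The two ingredients you still owe in quantitative form --- an $s$-uniform $\O(\scp^{3/2})$ bound for the cubic remainder of the transported symbol, which needs the Gronwall bounds on $D\flowts{t}{s}$ and its higher derivatives, and the Egorov remainder acting on $\vsol(s)$, which in the magnetic case is controlled only in the weighted norm $\normLtwo{\op(\pstup)\varphi}$ --- are exactly what steps (1) and (4) of the proof of \Cref{prop:timdep_egorov} supply, so the argument closes.
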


Note that the convergence in the observables is of order $\eps^2$, 
while the convergence in the $L^2$-norm presented in \Cref{thm:L2} is of order $\sqrt{\eps}$. This is
an improvement of the results obtained in \cite[Theorem~3.5]{LasL20}, where $\mathcal O(\sqrt\scp)$ norm accuracy and an $\mathcal O(\scp)$ estimate for the non-magnetic observable error were proved.
The rest of the paper is devoted to the proofs of the equations of motion and the error estimates presented in this section.


\section{Equations of motions: proof of \Cref{thm:eqmo,thm:hagmotion}}\label{sec:eqmo}

In this section we derive equations of motion for the parameters $(\pos, \mom, \wm, \pha)$ as well as for the factorization matrices $\fcQ$ and $\fcP$.
To do so, we compute both sides of \eqref{eq:var} and compare the coefficients.


\begin{proof}[Proof of \Cref{thm:eqmo}]
	In order to use the formula for the orthogonal projection derived in {\cite[Prop.~3.14]{LasL20}} for \eqref{eq:var}, we observe that derivatives \wrt $x$ of a \GWP turn into
	scalar functions of 
	 $x$ 
	  times $\vsol$. For notational simplicity, we omit the \timedependence and in the potentials $\mgPot$ and $\mltPot$ we omit the space variable $x$. In particular, we have
	  \begin{subequations}\label{eq:derivative-gwp-all}
	  	\begin{align+}
	  \ii\scp \mgPot \cdot \nabla \vsol &= -\mgPot\cdot\bigl(\wm (x-\pos)+\mom\bigr) \vsol, \label{eq:advection-gwp}\\
	  -\frac{\scp^2}{2}\Delta \vsol &= \Bigl(\frac{1}{2}(x-\pos)^T\wm^2(x-\pos) + \mom^T\wm (x-\pos)+\frac{1}{2}|\mom|^2 - \frac{\ii\scp}{2}\tr(\wm)  \Bigr) \vsol, \label{eq:laplace-gwp}
	  \end{align+}
	  \end{subequations}
	and for the time derivative it holds that
	\begin{equation}\label{eq:formula-dt-vsol}
	\ii\scp \pt \vsol(\cdot,x) = 
	\Bigl(-\frac{1}{2}(x-\pos)^T\dot{\wm}(x-\pos) + \dot{\pos}^T\wm(x-\pos) - \dot{\mom}^T(x-\pos) + \mom^T \dot{\pos} -\dot{\zeta}\Bigr) \vsol.
	\end{equation}
	%
	%
	Motivated by the classical magnetic Hamiltonian system \eqref{eq:cldiff},
	we eliminate one degree of freedom by setting $\dot{\pos} = \mom - \langle \mgPot\rangle_\vsol$, see \cite{Hal13_book, GusS20_book}. 
	%
%
	Incorporating the above formulas, we compare the coefficients in $x$ on both sides of \eqref{eq:var} and arrive at equations of motions of the form
	\begin{align}
	\dot{\pos} &= \mom - \langle \mgPot\rangle_\vsol,
	\\
	\dot{\mom} &= \bigl\langle J_\mgPot^T \RC (x-\pos)\bigr\rangle_\vsol + \bigl\langle J_\mgPot \bigr\rangle_\vsol^T \mom  - \bigl\langle \nabla \mPot \bigr\rangle_\vsol,\\
	%
	%
	\dot{\wm} &= - \wm^2 + \bigl\langle D^2_{\mgPot,\RC(x-\pos)} \bigr\rangle_\vsol + \bigl\langle  D^2_{\mgPot,\mom}\bigr\rangle_\vsol + \bigl\langle J_\mgPot\bigr\rangle_\vsol^T \wm + \wm \bigl\langle J_\mgPot\bigr\rangle_\vsol  - \bigl\langle \nabla^2 \mPot \bigr\rangle_\vsol,\\
	%
	%
	\dot{\pha} &= \frac12 |\mom|^2
	 +
	  \l\langle \mgPot^T \RC(x-\pos) \r\rangle_\vsol 
	  + 
	  \frac{\ii \eps}{2} \tr (\wm)  \\
	&\quad - \frac{\eps}{4}\tr
	\bigl(\ICinv
	\bigl( \bigl\langle D^2_{\mgPot,\RC(x-\pos)} \bigr\rangle_\vsol 
	+
	 \l\langle J_\mgPot\r\rangle_\vsol^T \RC 
	 +
	 \RC \langle J_\mgPot\rangle_\vsol 
	 +
	  \langle  D^2_{\mgPot,\mom}\rangle_\vsol\bigr)
	\bigr)\\
	&\quad  - \langle \mPot\rangle_\vsol + \frac{\eps}{4}\tr\bigl(\ICinv \bigl\langle \nabla^2 \mPot\bigr\rangle_u \bigr).
	\end{align}
	It remains to extract the additional power of $\scp$ from the terms that contain the difference $x-\pos$.
	From
	\begin{equation}
	|\vsol(x)|^2 = \exp\Bigl(-\frac{1}{\scp} (x-\pos)^T \IC (x-\pos) - \frac{2}{\scp} \im \zeta\Bigr)
	\end{equation}
	we obtain the derivative
	\begin{align}
	\nabla |\vsol(x)|^2 &= -\frac{2}{\scp}\IC(x-\pos)|\vsol(x)|^2,\label{eq:nabla_abs_u} 
	\end{align}
	and apply integration by parts to obtain
	\begin{align}
	\bigl\langle \mgPot^T \RC (x-\pos)\bigr\rangle_\vsol &= \bigl\langle \mgPot^T \RC \ICinv \IC (x-\pos)\bigr\rangle _\vsol
	\\
	&= \int_{\R^\dim} \mgPot^T \RC \ICinv \IC (x-\pos) |\vsol(x)|^2 \dd x
	\\
	%
	&= \frac{\eps}{2} \bigl\langle \tr\l(J^T_A \RC \ICinv\r) \bigr\rangle_\vsol .
	\end{align}
	Similarly, we gain an order of $\scp$ for
	\begin{align}
	\bigl( \bigl\langle J_\mgPot^T \RC (x-\pos)\bigr\rangle_\vsol \bigr)_{i}
	= \bigl(\bigl\langle J_\mgPot^T \RC \ICinv \IC (x-\pos)\bigr\rangle_{\vsol} \bigr)_i
	=\frac{\eps}{2} \bigl\langle\partial_i \tr\l(J^T_A \RC \ICinv\r) \bigr\rangle_\vsol,
	%
	\end{align}
	as well as for
	\begin{align}
	\bigl(\bigl\langle D^2_{A,\RC(x-q)} \bigr\rangle_\vsol\bigr)_{ij} 
	%
	%
	%
	&= \frac{\eps}{2} \Bigl\langle \sum_{k,l,m =1}^d \partial_m\partial_i \partial_j \mgPot_k \RCkl{k l}\ICinvkl{lm}  \Bigr\rangle_\vsol.
	\end{align} 
	By the identity 
	%
	\begin{align}
	\partial_{ij} \tr\l(J_\mgPot^T \RC \ICinv\r) & = \sum_{k,m,l=1}^\dim \partial_{ij} \partial_m \mgPot_k\RCkl{k l} \ICinvkl{lm},
	\end{align}
	we conclude the equations of motion stated in \eqref{eq:eqmo}.
\end{proof}

We now turn to the equations of motion for the Hagedorn factorization \eqref{eq:Hag_fac}.
The idea is to show that the product $\fcP \fcQ^{-1}$ solves the same differential equation as $\wm$ and conclude with the uniqueness of the variational solution $\vsol$.

\begin{proof}[Proof of \Cref{thm:hagmotion}]
		We employ the differential identity
		\begin{equation}
		\pt(\fcQ^{-1} ) = -\fcQ^{-1}\pt{\fcQ}\fcQ^{-1} ,
		\end{equation}
and the product rule to find that $\wm = \fcP\fcQ^{-1}$ satisfies the differential equation 
	\begin{equation}
	\dot{\wm} = -\fcP\fcQ^{-1}\dot{\fcQ}\fcQ^{-1} +\dot{\fcP}\fcQ^{-1}
	\end{equation}
	with $\pt \fcQ = \dot{\fcQ}$. Then, using \eqref{eq:hageqmo}, we see that this is the differential equation for $\wm$ in 
	\eqref{eq:eqmo_C}.
%
%

		Concerning the symplectic relation in \eqref{eq:PQ}, we have 
		\begin{equation}
		\pt (\fcQ^T \fcP - \fcP^T \fcQ) = \dot{\fcQ}^T \fcP + \fcQ^T \dot{\fcP} - \dot{\fcP}^T \fcQ - \fcP^T \dot{\fcQ},
		\end{equation}
		and by inserting the differential equations of $\fcP, \fcQ$ given in \eqref{eq:hageqmo}, we see that $\fcQ^T \fcP - \fcP^T \fcQ$ is constant.
		%
		The same calculation holds for $\pt(\fcQ^* \fcP - \fcP^* \fcQ)$ with $^*$ replaced by $^T$. Since $p,q,A$ and $V$ are real valued, we conclude
		\begin{equation}
		\pt(\fcQ^* \fcP - \fcP^* \fcQ) = \dot{\fcQ}^* \fcP + \fcQ^* \dot{\fcP} - \dot{\fcP}^* \fcQ - \fcP^* \dot{\fcQ} =0,
		\end{equation}
		which means that \eqref{eq:PQ} holds true for all times.	
\end{proof}


\subsection{Equations of motion for a general Hamiltonian}

The findings of \Cref{thm:eqmo} for the magnetic  Schr\"odinger operator $\Ham(t)$ extend to the dynamics for general Hamiltonian operators that are the Weyl quantization of a smooth function 
$\cham:\R\times\R^{2\dim}\to\R$ of subquadratic growth, that is, for all 
$\alpha\in\N_0^{2\dim}$ with $|\alpha|\ge 2$ there exists $C_\alpha>0$ such that 
\begin{equation}\label{eq:general-ham-subquadr}
|\partial^\alpha \cham(t,\varone,\vartwo)| \le C_\alpha
\end{equation}
for all $t\in\R$ and $(\varone,\vartwo)\in\R^{2\dim}$.
Note that the classical magnetic Hamiltonian function \eqref{eq:clham} is not subquadratic, but our analysis works for both cases.

A first step for the generalization is the construction of a suitable orthonormal basis of the tangent space of a Gaussian wave packet, which is done in \cite[Lemma~3.12 and Theorem~4.1]{LasL20} for the non-magnetic case, where only the modulus squared of the wave packet matters. For convenience, we state the representation formulas of the basis functions that we use. Consider a Gaussian wave packet $\vsol\in\Mf$ of unit norm, $\|\vsol\| =1$.  The family $\{\varphi_n\}_{|n|\le 2}$ with
	\begin{subequations}\label{eq:onb-all}
			\begin{align+}
		\varphi_0 &= \vsol,\\
		\varphi_{e_j} &= \sqrt{\frac{2}{\scp}}\ \left(Q^{-1}(x-\pscone)\right)_j u,\\
		\varphi_{e_j+e_k} &= \frac{1}{\sqrt{\delta_{kj}+1}}\,\left(\frac{2}{\scp}\,\left(Q^{-1}(x-\pscone)\right)_j \,\left(Q^{-1}(x-\pscone)\right)_k  - (Q^*Q^{-T})_{j,k} \right) \vsol ,
	\end{align+}
	\end{subequations}
		is an orthonormal basis of the tangent space 
		$\mathcal T_u\mathcal M$ of $\mathcal M$ at $u$.
For calculating the orthogonal projection to the tangent space, we make use of another representation via the raising and lowering operators $\mathcal A_j^\dagger$ and $\mathcal A_j$. These are the $j$th component of the vector-valued operators
	\begin{align}
		\mathcal A^\dagger &= \frac{\ii}{\sqrt{2\scp}}\left( P^*\op(\varone-\pscone) -Q^*\op(\vartwo-\psctwo)\right),\\
		\mathcal A &= -\frac{\ii}{\sqrt{2\scp}}\left( P^T\op(\varone-\pscone) -Q^T\op(\vartwo-\psctwo)\right),
	\end{align}
	respectively.
Using the complete family of Hagedorn functions constructed by the infinite ladder process, we obtain that $\{\varphi_n\}_{|n|\le 2}$  with
		\begin{align}\label{eq:onb-ladder-all}
			\varphi_0 &= \vsol, \quad
			\varphi_{e_j} = \mathcal A_j^\dagger \vsol, \quad
			\varphi_{e_k + e_j} = \frac{1}{\sqrt{\delta_{kj} + 1}}\ \mathcal A_j^\dagger \ \mathcal A_k^\dagger \vsol,
		\end{align}
%
see also \cite[Chapter~V.2]{Lub08_book} or \cite[Theorem~3.3]{Hag98}.

Equipped with the orthonormal basis \eqref{eq:onb-all} and \eqref{eq:onb-ladder-all}, we can give an explicit formula for the quadratic polynomial generated by the orthogonal projection when acting on a general Hamiltonian operator.
\begin{prop}[Orthogonal projection]\label{prop:orthogonal-proj}
	Let $\cham:\R^{2\dim}\to\R$ be smooth and of growth. Let $\vsol\in\Mf$ be a Gaussian wave packet 
	of unit norm, $\|\vsol\| = 1$, with phase space center $z_0=(\pscone,\psctwo)\in\R^{\dim}\times\R^\dim$. Then, 
	\begin{equation}
	P_\vsol (\op(\cham)\vsol) = p_2 \vsol,
	\end{equation}
	where $p_2$ is the quadratic polynomial
	\begin{align}
	& p_2:\R^{\dim}\to\C,\\
	& p_2(x) = \beta + b^T(x-\pscone) + \frac12 (x-\pscone)^T B(x-\pscone)
	\end{align} 
	given by the complex coefficients
	\begin{align}
	\beta &= \langle \cham\rangle_\vsol -\frac{\scp}{4}\, \tr(B \ \IC^{-1}),\\
	b &= \begin{pmatrix}\Id & \wm\end{pmatrix} \langle \nabla \cham\rangle_\vsol\in\C^d,\\
	B &= \begin{pmatrix}\Id & \wm \end{pmatrix}\langle \nabla^2 \cham\rangle_\vsol 
	\begin{pmatrix}\Id\\ \wm\end{pmatrix}\in\C^{\dim\times \dim}. 
	\end{align}
	The notation $\langle \clobs\rangle_\vsol = \langle \vsol\mid\op(\clobs)\vsol\rangle$ refers to the expectation value of a quantized smooth observable $\clobs:\R^{2\dim}\to\R^L$ with respect to the Gaussian state $\vsol$.
\end{prop}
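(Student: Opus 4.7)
\medskip

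\textbf{Proof proposal.} My plan is to compute the orthogonal projection onto the tangent space $\mathcal T_u\mathcal M$ directly from the expansion in the orthonormal basis $\{\varphi_n\}_{|n|\le 2}$ supplied by \eqref{eq:onb-all}/\eqref{eq:onb-ladder-all}, and to reduce every inner product $\langle \varphi_n\mid\op(\cham)u\rangle$ to an expectation value of a derivative of $\cham$ by exploiting the ladder structure. More precisely, I would write
\[
P_u\bigl(\op(\cham)u\bigr)=\sum_{|n|\le 2}\langle\varphi_n\mid\op(\cham)u\rangle\,\varphi_n
\]
and use the representation $\varphi_0=u$, $\varphi_{e_j}=\mathcal A_j^\dagger u$, $\varphi_{e_j+e_k}=(\delta_{jk}+1)^{-1/2}\mathcal A_j^\dagger\mathcal A_k^\dagger u$ from \eqref{eq:onb-ladder-all}.

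The key reduction uses $\mathcal A_j u=0$ (the wave packet is the vacuum of the ladder system) together with the adjoint relation $(\mathcal A_j^\dagger)^*=\mathcal A_j$ and the standard Weyl commutator identities $[\op(\pscone_k),\op(\cham)]=\ii\scp\,\op(\partial_{\psctwo_k}\cham)$ and $[\op(\psctwo_k),\op(\cham)]=-\ii\scp\,\op(\partial_{\pscone_k}\cham)$. Applied once, these give
\[
\langle\varphi_{e_j}\mid\op(\cham)u\rangle=\langle u\mid[\mathcal A_j,\op(\cham)]u\rangle=\sqrt{\tfrac{\scp}{2}}\bigl(Q^T\langle\nabla_\pscone\cham\rangle_u+P^T\langle\nabla_\psctwo\cham\rangle_u\bigr)_j,
\]
and applied twice, after commuting a second $\mathcal A$ through to the vacuum,
\[
\langle\varphi_{e_j+e_k}\mid\op(\cham)u\rangle=\tfrac{1}{\sqrt{\delta_{jk}+1}}\,\tfrac{\scp}{2}\bigl(\widetilde Q^T\langle\nabla^2\cham\rangle_u\widetilde Q\bigr)_{jk},\qquad\widetilde Q=\binom{Q}{P}.
\]
The factorisation $\wm=PQ^{-1}$ turns $\widetilde Q=\binom{I}{\wm}Q$, so that $\widetilde Q^T\langle\nabla^2\cham\rangle_u\widetilde Q=Q^T B Q$, with $B$ as in the statement; similarly the first-order coefficient reorganises to $Q^T b$ with $b$ as claimed.

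To read off $p_2$ as a polynomial in $x-\pscone$, I substitute these coefficients back into the basis expansion and use $\varphi_{e_j}=\sqrt{2/\scp}\,(Q^{-1}(x-\pscone))_j u$ and the explicit formula for $\varphi_{e_j+e_k}$ from \eqref{eq:onb-all}. The $Q^T$ factors picked up in the coefficients cancel exactly against the $Q^{-1}$ factors in the basis functions, producing precisely $b^T(x-\pscone)$ for the linear part and $\tfrac12(x-\pscone)^T B(x-\pscone)$ for the quadratic part. The constant term is more delicate: $\varphi_0(\pscone)=u(\pscone)$ contributes $\langle\cham\rangle_u$, but the second-order basis functions do not vanish at $x=\pscone$, contributing an extra term proportional to $(Q^*Q^{-T})_{jk}(Q^T B Q)_{jk}$. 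Using the identity $\IC^{-1}=QQ^*$ and the symplectic relations \eqref{eq:PQ} between $Q$ and $P$, this collapses to $-\tfrac{\scp}{4}\tr(B\,\IC^{-1})$, yielding the stated value of $\beta$.

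The main obstacle will be the last bookkeeping step: organising the double sum over the quadratic basis offsets so that it recognises the trace $\tr(B\IC^{-1})$ cleanly. Apart from this algebraic identity, the proof is essentially linear algebra once the ladder reduction is in place.
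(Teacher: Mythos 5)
Your proposal is correct and follows essentially the same route as the paper's own proof: expansion in the Hagedorn orthonormal basis of the tangent space, reduction of the coefficients to $\langle\nabla\cham\rangle_\vsol$ and $\langle\nabla^2\cham\rangle_\vsol$ via the vacuum property $\mathcal A_j\vsol=0$ and exact commutator calculus for the linear ladder symbols, reorganisation through $Z=\binom{\Id}{\wm}Q$, and the trace identity $\IC^{-1}=QQ^*$ for the constant offset of the second-order basis functions. The only differences are cosmetic (componentwise commutators versus the compact Poisson-bracket form $Z_j^TJ(\pstup-\psc)$ used in the paper).
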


\begin{proof}
	We use the Hagedorn wave packets $\{\varphi_n\}_{|n|\le 2}$ associated with the Gaussian wave packet $u$ as an orthonormal basis of the tangent space $\mathcal T_u\mathcal M$, see \eqref{eq:onb-all} and \eqref{eq:onb-ladder-all}, and write the orthogonal projection as
	\[
	P_u (\op(h)u) = \sum_{|n|\le 2} \langle\varphi_n\mid\op(h)u\rangle\ \varphi_n.
	\]  
	Starting with the contribution for $n=0$, we have
	\begin{align}
	\langle\varphi_0\mid\op(h)u\rangle &= \langle u\mid\op(h)u\rangle =\langle h\rangle_u.
	\end{align}
	For the following, it will be useful to introduce the slim rectangular matrix 
	$Z = (Q;P)\in\C^{2d\times d}$ with column vectors $Z_1,\ldots,Z_d\in\C^{2d}$ and to write the ladder operators more compactly as
	\begin{align}
		\mathcal A^\dagger &= \frac{\ii}{\sqrt{2\scp}}\ Z^* J\op(\pstup-\psc) ,\qquad
		\mathcal A = -\frac{\ii}{\sqrt{2\scp}}\ Z^TJ\op(\pstup-\psc).
\end{align}
	For $n=e_j$ we have by \eqref{eq:onb-all}, \cite[Lemmas~4.1, and~4.2]{LasL20} that
	\begin{align}
	\langle\varphi_{e_j}\mid\op(h)u\rangle &= 
	\langle u\mid \mathcal A_j\op(h)u\rangle 
	=\langle u\mid [\mathcal A_j,\op(h)]u\rangle.
	\end{align}
	Since the symbol of $\mathcal A_j$ is linear, we can use pseudodifferential calculus without remainders and obtain that the commutator satisfies
	\begin{align}
	[\mathcal A_j,\op(h)] &= -\frac{\ii}{\sqrt{2\scp}} \  \left[\op(Z_j^TJ(\pstup-\psc)),\op(\cham)\right]
	\\
	&
	= -\frac{\ii}{\sqrt{2\scp}} \frac{\scp}{\ii}  \op(\{Z_j^TJ(\pstup-\psc),\cham\})\\ 
	&= \sqrt{\frac{\scp}{2}} \ \op(Z_j^T\nabla h), \label{eq:com}
	\end{align}
	where we have calculated the Poisson bracket according to 
	\begin{align}
	\{Z_j^TJ z,h\} &= \nabla (Z_j^TJz) \cdot J\nabla h = -Z_j^T\nabla h.
	\end{align}
	Therefore, 
	\[
	\langle\varphi_{e_j}\mid\op(h)u\rangle = \sqrt{\frac{\scp}{2}} Z_j^T\langle\nabla h\rangle_u.
	\]
	After summation, we therefore obtain that
	\begin{align}
	\sum_{j=1}^d \langle\varphi_{e_j}\mid\op(h)u\rangle \,\varphi_{e_j} &= 
	\sum_{j=1}^d \langle\nabla h\rangle_u^T Z e_j e_j^T Q^{-1}(x-\pscone)u\\
	&=\langle\nabla h\rangle_u^T Z Q^{-1}(x-\pscone)u\\
	&=\langle\nabla h\rangle_u^T \begin{pmatrix}\Id\\ \wm\end{pmatrix}(x-\pscone)u,
	\end{align}
	which concludes the computation of the first order contributions.
	For the second order wave packets, we analogously compute the projection coefficient as
	\begin{align}
	\langle \varphi_{e_j+e_k}\mid\op(h)u\rangle 
	&= \frac{1}{\sqrt{\delta_{kj}+1}} \langle u\mid[\mathcal A_j,[\mathcal A_k,\op(h)]]u\rangle.
	\end{align}
	Using \eqref{eq:com} twice, we obtain that the double commutator satisfies
	\begin{align}
	\left[\mathcal A_j,[\mathcal A_k,\op(h)]\right] 
	&= \sqrt{\frac{\scp}{2}}\left[\mathcal A_j,\op(Z_k^T\nabla h)\right]
	= \frac{\scp}{2}\,\op(Z_j^T\nabla^2 h Z_k).
	\end{align} 
	This implies for the coefficient that
	\begin{equation}
	\langle \varphi_{e_j+e_k}\mid\op(h)u\rangle  =  \frac{\scp}{2\sqrt{\delta_{kj}+1}}\,Z_j^T\langle\nabla^2 h\rangle_u Z_k.
	\end{equation}
	We now calculate the sum of all the second order contributions. We have 
	\begin{align}
	\sum_{|n|=2} \langle \varphi_n\mid\op(h)u\rangle \varphi_n 
	&= \sum_{j=1}^d \sum_{k=1}^j \langle \varphi_{e_j+e_k}\mid\op(h)u\rangle \varphi_{e_j+e_k}\\
	&= \sum_{j,k=1}^d \frac{\scp}{2\sqrt{2}}\,Z_j^T\langle\nabla^2 h\rangle_u Z_k \ \frac{1}{\sqrt2} \mathcal A_j^\dagger \mathcal A_k^\dagger u,
	\end{align}
	where the complete summation over the full square of indices is compensated by a change in 
	normalisation of the contributions for $j\neq k$. For the part of the sum that generates a constant prefactor for the Gaussian, we have
	\begin{align}
	-\frac{\eps}{4}\sum_{j,k=1}^d \,Z_j^T\langle\nabla^2 h\rangle_u Z_k \ (Q^*Q^{-T})_{j,k}
	&=-\frac{\eps}{4}\tr( Q^*Q^{-T} Z^T \langle\nabla^2 h\rangle_u Z)\\
	&=-\frac{\eps}{4}\tr(\begin{pmatrix}\Id & \mathcal C\end{pmatrix} \langle\nabla^2 h\rangle_u \begin{pmatrix}\Id\\ \mathcal C\end{pmatrix} QQ^*).
	\end{align}
	For the quadratic prefactor, we similarly obtain
	\begin{align}
	&
	\frac{1}{2}\sum_{j,k=1}^d \,Z_j^T\langle\nabla^2 h\rangle_u Z_k \ 
	\left(Q^{-1}(x-\pscone)\right)_j \,\left(Q^{-1}(x-\pscone)\right)_k 
	\\
	&=\frac12 (x-\pscone)^T \begin{pmatrix}\Id & \mathcal C\end{pmatrix} \langle\nabla^2 h\rangle_u \begin{pmatrix}\Id\\ \mathcal C\end{pmatrix}
	(x-\pscone).
	\end{align}
\end{proof}

Let $\cham:\R\times\R^{2\dim}\to\R$ be continuous with respect to time $t\in\R$, and smooth, and of subquadratic growth in the sense of \eqref{eq:general-ham-subquadr}. Denote $H(t) = \op(\cham(t))$. Then, the time-dependent Schr\"odinger equation
\[
i\scp\partial_t\sol(t) = H(t)\sol(t),\quad \psi(0) = \psi_0
\]
has a unique solution $\sol(t) = U(t,0)\psi_0$ for all times $t\in\R$ for all square integrable initial data $\psi_0\in L^2(\R^\dim)$, see \cite{MasR17} or \cite[Def.~1]{RobC21_book}. The corresponding variational Gaussian wave packet obeys the following equations of motion.

\begin{theorem}[Equations of motion for a general Hamiltonian]\label{theo:eqmo-general-ham}
Let $\vsol_0\in\Mf$ satisfy \eqref{eq:gauss_initialdata} and  be given by its parameters $\pos_0, \mom_0, \wm_0, \pha_0$ defined in \eqref{eq:gwp}. Then, the parameters of the variational approximation 
	\begin{equation}
	\ii\scp \partial_t \vsol(t) = P_{\vsol(t)} (H(t)\vsol(t)),\quad u(0) = u_0
	\end{equation}
	satisfy the following set of ordinary differential equations \eqref{eq:eqmo-general}
subject to initial data $(\pos(0),\mom(0), \wm(0), \pha(0)) = (\pos_0, \mom_0, \wm_0, \pha_0)$, where $\cham$ is now the given general subquadratic classical Hamiltonian function.
The Hagedorn parameter matrices of the variational wave packet 
satisfy:
	\begin{subequations}
		\begin{align}
			\dot{\fcP} &= -\langle \nabla_{\pos\pos} \cham\rangle_\vsol \fcQ -  \langle\nabla_{\pos\mom} \cham\rangle_\vsol\fcP, \qquad
			\dot{\fcQ} =   \langle\nabla_{\mom\pos} \cham \rangle_\vsol \fcQ + \langle\nabla_{\mom\mom} \cham\rangle_\vsol \fcP. \label{eq:hag-eqmo-general-Q}
		\end{align}
	\end{subequations}
Moreover, the matrix factors $\fcQ$, $\fcP$ are symplectic, provided that the initial matrices $\fcQ_0$, $\fcP_0$ of the factorization $\wm_0 = \fcP_0 \fcQ_0^{-1}$ are symplectic.
\end{theorem}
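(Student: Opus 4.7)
The plan is to mirror the derivation of \Cref{thm:eqmo,thm:hagmotion}, now leveraging the compact polynomial formula just established in \Cref{prop:orthogonal-proj} for a general subquadratic symbol $\cham$. The left-hand side $\ii\scp\pt\vsol$ of the variational equation is, upon differentiating the Gaussian ansatz \eqref{eq:gwp} in time as in \eqref{eq:formula-dt-vsol}, a quadratic polynomial in $(x-\pos)$ multiplying $\vsol$ whose coefficients depend linearly on $\dot{\pos},\dot{\mom},\dot{\wm},\dot{\pha}$. By \Cref{prop:orthogonal-proj} the right-hand side $\Prt(\op(\cham)\vsol)$ has precisely the same form with coefficients $\beta,b,B$ built from the averages $\langle\cham\rangle_\vsol$, $\langle\nabla\cham\rangle_\vsol$ and $\langle\nabla^2\cham\rangle_\vsol$. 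Matching coefficients of each monomial in $(x-\pos)$ then produces the ODEs.

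Concretely, the quadratic part of the matching yields $\dot{\wm}=-B$, which is the $\dot{\wm}$-equation in \eqref{eq:eqmo-general} once $B$ is expanded into its four Hessian blocks. The linear part is the single complex vector equation $\wm\dot{\pos}-\dot{\mom}=b$; I would split it into real and imaginary parts. Since $\cham$ is real-valued and $\IC$ is positive definite, the imaginary part reduces to $\IC\dot{\pos}=\IC\langle\nabla_\mom\cham\rangle_\vsol$ and hence to $\dot{\pos}=\langle\nabla_\mom\cham\rangle_\vsol$, which is \eqref{eq:eqmo-general-q}; the real part then gives $\dot{\mom}=-\langle\nabla_\pos\cham\rangle_\vsol$, which is \eqref{eq:eqmo-general-p}. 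The constant-coefficient match, solved for $\dot{\pha}$, yields the phase equation \eqref{eq:eqmo-general-pha}.

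For the Hagedorn-factorized form I would use the product-rule identity $\dot{\wm}=(\dot{\fcP}-\wm\dot{\fcQ})\fcQ^{-1}$ for $\wm=\fcP\fcQ^{-1}$, exactly as in the proof of \Cref{thm:hagmotion}, so that it suffices to check $\dot{\fcP}-\wm\dot{\fcQ}=-B\fcQ$ for the proposed equations. Expanding $-B\fcQ$ and using $\wm\fcQ=\fcP$ to redistribute the four Hessian blocks splits the right-hand side cleanly into the two groups defining $\dot{\fcP}$ and $\dot{\fcQ}$ in \eqref{eq:hag-eqmo-general-Q}; uniqueness of the variational solution then makes the two formulations equivalent. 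Symplecticity is verified by differentiating $\fcQ^T\fcP-\fcP^T\fcQ$ and $\fcQ^*\fcP-\fcP^*\fcQ$ in time, substituting \eqref{eq:hag-eqmo-general-Q}, and using the symmetry and real-valuedness of $\langle\nabla^2\cham\rangle_\vsol$ to cancel the four resulting terms pairwise.

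The step I expect to require the most care is identifying the correct asymmetric split of $-B\fcQ$ into the $\dot{\fcP}$ and $\dot{\fcQ}$ blocks. The product-rule constraint only fixes the combination $\dot{\fcP}-\wm\dot{\fcQ}$, leaving a matrix-valued gauge freedom that must be resolved in precisely the way dictated by the linearized classical Hamiltonian flow generated by $J^{-1}\langle\nabla^2\cham\rangle_\vsol$, so that the symplectic relations \eqref{eq:PQ} are simultaneously preserved. The split in \eqref{eq:hag-eqmo-general-Q} is exactly this one; once it is recognized, the remaining verifications reduce to routine algebra.
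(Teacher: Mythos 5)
Your proposal is correct and follows essentially the same route as the paper: differentiate the Gaussian ansatz via \eqref{eq:formula-dt-vsol}, apply \Cref{prop:orthogonal-proj} to the right-hand side, match polynomial coefficients in $x-\pos$, and then transfer to the Hagedorn factors and check symplecticity exactly as in \Cref{thm:hagmotion}. The only (harmless) difference is that where the paper \emph{declares} $\dot{\pos}=\langle\nabla_\mom\cham\rangle_\vsol$ as the resolution of the remaining degree of freedom, you derive it as forced from the imaginary part of the linear-coefficient equation using the positive definiteness of $\IC$ and the realness of $\langle\nabla\cham\rangle_\vsol$, which is a slightly more careful version of the same step.
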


\begin{proof}
We again use \eqref{eq:formula-dt-vsol} and \Cref{prop:orthogonal-proj} and compare the coefficients with respect to the spatial variable $x$. We have one degree of freedom and set, inspired by  \eqref{eq:eqmo_q},
\begin{equation}
\dot{\pos} =\langle \nabla_\mom \cham\rangle_\vsol.
\end{equation}
Now, the claim follows by a direct calculation.
\end{proof}

The equations of motion given in \Cref{theo:eqmo-general-ham} are indeed a generalization of the magnetic ones derived in \Cref{thm:eqmo} as we verify next.


\begin{corollary} In the special space of the magnetic Hamiltonian given in \eqref{eq:clham} we rediscover the equations of motion \eqref{eq:eqmo}.
Moreover, if $\scp \to 0$ and averages tend to point evaluations at the center point $\pos$, then the equations \eqref{eq:eqmo-general-q} and \eqref{eq:eqmo-general-p} tend to classical equations of motion for a general classical Hamiltonian function $\cham$.
\end{corollary}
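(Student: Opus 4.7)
My plan is to dispatch the two claims separately, the first by explicit substitution and the second by a concentration argument. Throughout, I will rely on the Weyl-quantized interpretation of the brackets in \Cref{prop:orthogonal-proj}, namely $\langle a\rangle_\vsol = \langle\vsol\mid\op(a)\vsol\rangle$.

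\textbf{Recovery of the magnetic equations.} For the classical Hamiltonian $\cham(t,\varone,\vartwo)=\tfrac12|\vartwo|^2-\mgPot(t,\varone)\cdot\vartwo+\mPot(t,\varone)$ I would first record the partial derivatives
\begin{align*}
\nabla_\vartwo \cham &= \vartwo - \mgPot(\varone), & \nabla_\varone \cham &= -J_\mgPot(\varone)^T\vartwo + \nabla\mPot(\varone),\\
\nabla_{\vartwo\vartwo}\cham &= \Id, & \nabla_{\varone\vartwo}\cham &= -J_\mgPot(\varone),\\
\nabla_{\varone\varone}\cham &= -D^2_{\mgPot,\vartwo}+\nabla^2\mPot(\varone). &&
\end{align*}
Substituting $\nabla_\vartwo\cham$ into \eqref{eq:eqmo-general-q} gives $\dot\pos=\mom-\langle\mgPot\rangle_\vsol$, which is \eqref{eq:eqmo_q}. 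For $\dot\mom$ in \eqref{eq:eqmo-general-p} the only nontrivial point is the mixed term $\langle J_\mgPot^T\vartwo\rangle_\vsol$. Writing $\op(J_\mgPot^T\vartwo)$ in Weyl form and using $\divergence\mgPot=0$ together with the integration-by-parts identity employed in the proof of \Cref{thm:eqmo} (via $\nabla|\vsol|^2=-\tfrac2\scp\IC(x-\pos)|\vsol|^2$), I would obtain the extra $\tfrac\scp2\langle\nabla\tr(J_\mgPot^T\RC\ICinv)\rangle_\vsol$ contribution, yielding \eqref{eq:eqmo_p}. The equation for $\wm$ is handled analogously by assembling the Hessian blocks and processing the cross term $\langle D^2_{\mgPot,\RC(x-\pos)}\rangle_\vsol$ with the same integration-by-parts trick, which reproduces \eqref{eq:eqmo_C}. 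Finally \eqref{eq:eqmo_pha} follows from \eqref{eq:eqmo-general-pha} by collecting $\langle\cham\rangle_\vsol$, the trace correction $\tfrac\scp4\tr(B\ICinv)$, and $\mom^T\langle\nabla_\mom \cham\rangle_\vsol$, and then once more invoking the integration-by-parts identity for the terms $\langle\mgPot^T\RC(x-\pos)\rangle_\vsol$ generated by $B$.

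\textbf{Classical limit.} For the second claim I would use the hypothesis that averages tend to point evaluations at $\pos$, extended to symbols depending jointly on $(\varone,\vartwo)$ by concentration of the Gaussian in phase space around $(\pos,\mom)$; this is the natural generalization of \cite[Lemma~3.15]{LasL20} and follows from expanding $\cham$ around $(\pos,\mom)$ and controlling the resulting Weyl moments by $\IC$ and $\wm$. Then \eqref{eq:eqmo-general-q} and \eqref{eq:eqmo-general-p} pass to
\begin{equation*}
\dot\pos = \nabla_\mom \cham(t,\pos,\mom),\qquad \dot\mom = -\nabla_\pos \cham(t,\pos,\mom),
\end{equation*}
which is Hamilton's classical system for the general subquadratic Hamiltonian $\cham$.

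\textbf{Main obstacle.} The only delicate point is the bookkeeping in the first part: the general equations \eqref{eq:eqmo-general} are phrased in Weyl-expectation averages, whereas \eqref{eq:eqmo} uses $L^2$-integrals of the potentials against $|\vsol|^2$. The translation between these two is what generates the $\tfrac\scp2$-corrections in \eqref{eq:eqmo_p}, \eqref{eq:eqmo_C}, and \eqref{eq:eqmo_pha}, so the matching has to be carried out carefully symbol by symbol, using $\divergence\mgPot=0$ together with the integration-by-parts identity. Once this dictionary is in place, both parts reduce to direct verification.
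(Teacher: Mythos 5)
Your proposal is correct and follows essentially the same route as the paper: compute the first and second derivatives of the magnetic Hamiltonian $\cham$, substitute them into the general equations \eqref{eq:eqmo-general}, and convert the Weyl-expectation brackets into position-space averages via the integration-by-parts identity $\nabla|\vsol|^2=-\tfrac2\scp\IC(x-\pos)|\vsol|^2$, which produces the $\tfrac\scp2$-trace corrections; the classical limit then follows directly from the point-evaluation hypothesis. The only cosmetic difference is that the paper makes explicit the trace cancellation $-\tr\bigl((\RC^2+\IC^2)\ICinv\bigr)+\tr\bigl(\wm^2\ICinv\bigr)=2\ii\,\tr(\wm)$ in the phase equation, and the term $\langle\mgPot^T\RC(x-\pos)\rangle_\vsol$ there actually originates from $\langle\cham\rangle_\vsol$ rather than from the matrix $B$, but this does not affect the validity of your argument.
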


\begin{proof}
	We have that
	\begin{align}
	\langle \nabla_\mom \cham\rangle_\vsol &= \mom -\langle \mgPot\rangle_\vsol, \quad\text{ and }\quad
	-\langle \nabla_\pos \cham\rangle_\vsol 
	= -\ii\scp\langle J_A^T\nabla \rangle_\vsol - \langle \nabla \mPot \rangle_\vsol.
	\end{align}
Furthermore, it is
\begin{equation}
\nabla^2 \cham(\cdot,\pos, \mom) =
\begin{pmatrix}
 \nabla^2 \mPot(\cdot,\pos) - D^2_{\mgPot(\cdot,\pos),\mom}  & -J_A^T\\
-J_A & \Id
\end{pmatrix}
,
\end{equation}
such that the trace part appearing in \eqref{eq:eqmo-general-pha} contains the terms
\begin{alignat}{7}
-\langle \nabla_{\pos\pos} \cham\rangle_\vsol ~&=&&~ \langle D^2_{\mgPot,-\ii\scp \nabla}\rangle_\vsol -\langle  \nabla^2 \mPot\rangle_\vsol,& \qquad
- \langle\nabla_{\pos\mom} \cham\rangle_\vsol \wm ~&=&&~ \langle J_A^T\rangle_\vsol \wm,\\
- \wm \langle\nabla_{\mom\pos} \cham \rangle_\vsol ~&=&&~  \wm\langle J_A\rangle_\vsol,&
\qquad
-\wm \langle\nabla_{\mom\mom} \cham\rangle_\vsol \wm ~&=&&~ -\wm^2.
\end{alignat}
%
For the scalar contribution of the projection \Cref{prop:orthogonal-proj} we observe by \eqref{eq:derivative-gwp-all},
\begin{align}
\langle \cham \rangle_\vsol &= -\frac{\scp^2}{2} \langle \Delta \rangle_\vsol +\ii\scp\langle \mgPot\cdot  \nabla \rangle_\vsol +  \langle \mPot \rangle_\vsol\\
%
&=\frac12\abs{\mom}^2 +\frac{\eps}{4}\tr\Big((\RC^2 + \IC^2)\ICinv\Big) 
-\langle \mgPot^T \big(\RC(x-\pos)+\mom\big) \rangle_\vsol+  \langle \mPot \rangle_\vsol.
\end{align}
Finally, we calculate the following trace, appearing in \eqref{eq:eqmo-general-pha}, as
\begin{align}
-\tr\Big((\RC^2 + \IC^2)\ICinv\Big) + \tr\Big(\wm^2 \ICinv\Big) 
&= -2\,\tr(\IC) + 2\ii \,\tr(\RC)
= 2\ii\, \tr (\wm),
\end{align}
such that, together with
\begin{align}
 \mom^T \langle\nabla_{\mom} \cham\rangle_\vsol &= \abs{\mom}^2 - \mom^T \langle \mgPot \rangle_\vsol,
\end{align}
we obtain the differential equation \eqref{eq:eqmo_pha}.
\end{proof}


\section{\texorpdfstring{$L^2$-error bound: proof of
		\Cref{lem:bddparam} and \Cref{thm:L2}}{L2-error bound}}\label{sec:L2}

This section is devoted to the \wellposedness of the equations of motion \eqref{eq:eqmo}
and the approximation quality of the variational solution in the $L^2$-norm.

We first state
the following lemma which will be used frequently to obtain error bound \wrt $\scp$. 
We recall that the lower bound on the eigenvalues of $\IC$ was denoted by $\rho >0$.
%

\begin{lemma}[{\cite[Lemma~3.8]{LasL20}}]\label{lem:moments}
	For any $m \geq 0$ there exists a constant $c_{m}$ such that for all $\scp > 0$ it holds
	\begin{align}
	&(\pi\scp)^{-\frac{\dim}{4}}\det(\IC)^\frac{1}{4}\Bigl(\int{|x|^{2m} \exp\bigr(-\frac{1}{\scp}x^T \IC x\bigr)}\,\dx\Bigr)^{\frac{1}{2}}\leq c_{m}\Bigl(\frac{\scp}{\rho}\Bigr)^\frac{m}{2} ,
	\end{align}
	where $c_m$ is independent of $\scp$ and $\rho$.
\end{lemma}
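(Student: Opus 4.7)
The plan is to reduce the weighted Gaussian integral to a standard Gaussian moment integral by a linear change of variables diagonalising $\IC$, and then to extract the dependence on $\scp$ and $\rho$ cleanly from the Jacobian.

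First I would perform the substitution $y = \IC^{1/2} x/\sqrt\scp$, equivalently $x = \sqrt\scp\,\IC^{-1/2} y$, which is licit because $\IC$ is real symmetric and (by assumption) positive definite with smallest eigenvalue bounded below by $\rho>0$. This gives $x^T \IC x = \scp\,|y|^2$ and $\dx = \scp^{\dim/2}\det(\IC)^{-1/2}\,\dd y$, so
\begin{equation*}
\int_{\R^\dim} |x|^{2m}\exp\bigl(-\tfrac{1}{\scp} x^T\IC x\bigr)\,\dx
= \scp^{\dim/2}\det(\IC)^{-1/2}\int_{\R^\dim} \bigl|\sqrt\scp\,\IC^{-1/2} y\bigr|^{2m} \e^{-|y|^2}\,\dd y.
\end{equation*}

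Next I would estimate $|\IC^{-1/2} y|^{2m}\le \|\IC^{-1/2}\|^{2m}|y|^{2m} \le \rho^{-m}|y|^{2m}$ using the spectral bound on $\IC$. Pulling out the $\scp^m$ from the factor $|\sqrt\scp\,\IC^{-1/2}y|^{2m}$ and combining with $\scp^{\dim/2}$ yields
\begin{equation*}
\int_{\R^\dim} |x|^{2m}\exp\bigl(-\tfrac{1}{\scp} x^T\IC x\bigr)\,\dx
\le \scp^{m+\dim/2}\rho^{-m}\det(\IC)^{-1/2}\,I_m,
\end{equation*}
where $I_m\coloneqq \int_{\R^\dim}|y|^{2m}\e^{-|y|^2}\,\dd y<\infty$ is a finite, purely dimension- and $m$-dependent constant that one can write explicitly in terms of the Gamma function by passing to spherical coordinates.

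Finally I would multiply by the prefactor $(\pi\scp)^{-\dim/4}\det(\IC)^{1/4}$ after taking the square root. The factors $\det(\IC)^{\pm 1/4}$ cancel and the powers of $\scp$ combine as $\scp^{-\dim/4}\cdot\scp^{(m+\dim/2)/2} = \scp^{m/2}$, so
\begin{equation*}
(\pi\scp)^{-\dim/4}\det(\IC)^{1/4}\Bigl(\int_{\R^\dim}|x|^{2m}\e^{-\frac{1}{\scp}x^T\IC x}\,\dx\Bigr)^{1/2}
\le \pi^{-\dim/4}\,I_m^{1/2}\,\bigl(\scp/\rho\bigr)^{m/2},
\end{equation*}
which is the claimed bound with $c_m \coloneqq \pi^{-\dim/4} I_m^{1/2}$, a constant independent of both $\scp$ and $\rho$. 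There is no substantial obstacle here; the only point to watch is that the two $\det(\IC)$ factors must cancel exactly, which is why the normalisation prefactor $(\pi\scp)^{-\dim/4}\det(\IC)^{1/4}$ is chosen precisely as it appears in the statement.
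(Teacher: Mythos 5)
Your proof is correct and is essentially the standard argument behind the cited result: the substitution $y=\IC^{1/2}x/\sqrt{\scp}$ reduces everything to a fixed Gaussian moment integral, the spectral bound $\|\IC^{-1/2}\|\le\rho^{-1/2}$ extracts the $\rho^{-m}$ factor, and the determinant and $\scp$ powers cancel exactly against the normalisation prefactor. The paper itself only cites this lemma from \cite{LasL20} without reproving it, and your computation (including the sanity check $c_0=1$) is complete and sound.
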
 


We now prove the \wellposedness result for \eqref{eq:eqmo} and show the boundedness of the parameters solving \eqref{eq:eqmo}. 

\begin{proof}[Proof of \Cref{lem:bddparam}]
	We show that the right-hand side of \eqref{eq:eqmo} satisfies a local Lipschitz condition with Lipschitz constant independent of $\scp$. To this end it is sufficient if the derivatives \wrt parameters $\pos, \mom, \RC, \IC,\pha$ are bounded on a bounded domain. Then, we obtain a local solution and, as in \Cref{subsec:eqmo}, we can show that there is no \blowup.
	
	The potentials in the averages of the equations of motion in \eqref{eq:eqmo} do not depend on $\scp$. 
	However, we need to carefully treat
	the absolute values of the \GWP, since they contain $\scp$ in the denominator.
	By the chain rule, it is sufficient to {first} calculate the derivatives of averages of some arbitrary potential $\refPot$, which is independent of the parameters. Then, the average has the form
	\begin{align}
	\langle \refPot\rangle_\vsol 
	&= \frac{\sqrt{\det(\IC)}}{(\pi \scp)^\frac{\dim}{2}} \int \refPot(x) \exp\l(-\frac{1}{\scp}(x-\pos)^T \IC (x-\pos)\r) \,\dd x,
	\end{align}
	from which we see that, in this case, the average only depends on $\pos$ and $\IC$. Let $\vsol$ be a \GWP with $\normLtwo{\vsol} =1$. By \eqref{eq:nabla_abs_u} we obtain 
	\begin{align}
	 & \frac{\sqrt{\det(\IC)}}{(\pi \scp)^\frac{\dim}{2}} \ \partial_{\pos} \exp\l(-\frac{1}{\scp}(x-\pos)^T \IC (x-\pos)\r) \\
	 &=  \frac{\sqrt{\det(\IC)}}{(\pi \scp)^\frac{\dim}{2}} \ \frac{2}{\scp} \IC(x-\pos) \exp\l(-\frac{1}{\scp}(x-\pos)^T \IC (x-\pos)\r)\\
	&=- \nabla |\vsol(x)|^2,
	\end{align} 
	thus, using integration by parts, the derivative of the average \wrt to $\pos$ is given by
	\begin{align}
	\partial_\pos \langle \refPot(x) \rangle_\vsol &= \langle \nabla \refPot(x) \rangle_\vsol.
	\end{align}

	We continue with derivatives \wrt $\IC$. For a differentiable matrix function $F:\R^{\dim\times\dim}\rightarrow\R$ and a general invertible, symmetric matrix $\matrix = (m_{ij})_{i,j=1,\ldots,\dim}$ we define the componentwise derivation matrix
	\begin{equation}
	\partial_\matrix F(\matrix) \coloneqq (\partial_{m_{ij}} F(\matrix))_{i,j=1,\ldots,\dim} \in \R^{\dim\times\dim}.
	\end{equation}
	By \cite[Part 0.8.10]{HorJ13} we have
	\begin{align}
	\partial_{\matrix} a^T \matrix b = a b^T \quad \text{ and }\quad\partial_{\matrix} \det(\matrix)  ={\det(\matrix)} \matrix^{-1}
	\end{align} 
	and consequently, 
	\begin{align}
	\partial_{\matrix} \sqrt{\det(\matrix)} = \frac{1}{2} \sqrt{\det(\matrix)} M^{-1}.
	\end{align}
	Hence, it follows that
	\begin{equation}
	\partial_\IC\exp\left(-\frac{1}{\scp}(x-\pos)^T\IC(x-\pos)\right) = -\frac{1}{\scp}(x-\pos)(x-\pos)^T \exp\left(-\frac{1}{\scp}(x-\pos)^T\IC(x-\pos)\right)
	\end{equation}
	and 
	\begin{align}
	\partial_{\IC} \langle \refPot(x) \rangle_\vsol &= -\frac{1}{\scp} \bigl\langle (x-\pos)(x-\pos)^T \refPot(x) \bigr\rangle_\vsol
	 + 
	 \frac{1}{2}I^{-1}\bigl\langle  \refPot(x) \bigr\rangle_\vsol.
	\end{align}
	By \Cref{lem:moments} we have $\bigl|\bigl\langle (x-\pos)(x-\pos)^T \refPot(x) \bigr\rangle_\vsol\bigr| \leq C\scp$ for parameters on a bounded domain.
	
	For potentials depending on the parameters, we use again that we are on a bounded domain and that the dependence on $\scp$ of the potentials in \eqref{eq:eqmo} is such that $\scp$ does not enter the denominator.
\end{proof}

We now turn to the $L^2$-error bound
and adapt the
 proof of \cite[Theorem~3.5]{LasL20} to the magnetic case
 and note that the multiplication potential $\mPot$ is already covered. In order to demonstrate the dependence of the constant in the error bound, we carry out the proof for the advection term.
%
%
\begin{proof}[Proof of \Cref{thm:L2}]
	From the proof of \cite[Theorem~3.5]{LasL20} we know that
	\begin{equation}
	\normLtwobig{\sol(t)-\vsol(t)} \leq \int_0^t \frac{1}{\scp}\,\normLtwobig{ Hu - P_{u}(Hu)}\,\dd s.
	\end{equation}
\smallskip 
\noindent (a)
	We write the action of the magnetic Schr\"odinger operator $H$ on a Gaussian $u$ with width $\wm$ and phase space center $(\pos,\mom)$ as
	\[
	H\vsol = -\frac{\scp^2}{2}\Delta \vsol + Y \vsol + \mPot \vsol
	\]
    with 
	\begin{equation}\label{eq:advPot}
	\advPot \coloneqq -\mgPot \cdot (\wm (x-\pos) + \mom).
	\end{equation}
	We perform a second order Taylor expansion of the potentials $\advPot$ and $\mPot$ around 
	the point $\pos$ and denote by $\advPotTaylor$ and $\PotTaylor$ the respective remainders. Then,
	\begin{align}
    (\Id - P_\vsol)(H\vsol) = (\Id-P_\vsol)(\advPotTaylor\vsol + \PotTaylor\vsol)
	\end{align}
	and
	\begin{equation}
	\normLtwobig{\sol(t)-\vsol(t)} \leq \int_0^t{\frac{1}{\scp}\,\normLtwobig{ \advPotTaylor \vsol + \PotTaylor \vsol}}\,\dd s.
	\end{equation}
	%
	Since
	\begin{equation}\label{eq:Wq_taylor_remainder}
	\advPotTaylor = \frac{1}{2}\sum_{|\alpha|=3}{(x-\pos)^\alpha\int_0^1{(1-\theta)^2 \partial^\alpha \advPot\bigl(\pos+\theta (x-\pos)\bigr)}\dd \theta},
	\end{equation}
	%
	we bound $\normLtwobig{\advPotTaylor u}$ by finding a bound on ${\partial^\alpha \advPot \bigl(q+\theta (x-q)\bigr)}$, which then leads us to
	\begin{align} 
	|\advPotTaylor(x)|^2 \leq C|x-q|^6.
	\end{align}
	By norm conservation and \Cref{lem:moments} the claim 
	that $\normLtwobig{\advPotTaylor\vsol} = \mathcal O(\scp^{3/2})$ follows.
	%
	%
	For the third derivative of $\partial_{lmn} \advPot$ where $l,m,n = 1,\ldots,d$, we have
		\begin{equation}\label{eq:thirdderivY}
	\begin{aligned}
	\partial_{lmn} \advPot  &= (\partial_{lmn} \mgPot)^T \wm({x}-\pos)+ (\partial_{lmn} \mgPot)^T \mom \\
	&\quad + \big((\partial_{lm} \mgPot)^T \wm\big)_n +\big((\partial_{ln} \mgPot)^T \wm\big)_m + \big((\partial_{mn} \mgPot)^T \wm\big)_l,
	\end{aligned}
	\end{equation}
	where $\partial_{lmn} \mgPot$ is meant component wise. The term $x-\pos$ in \eqref{eq:thirdderivY} evaluated at $x = \pos+\theta(x-\pos)$ has the form 
	\begin{equation}
	 \theta (\partial_{lmn} \mgPot)^T \wm({x}-\pos).
	\end{equation}
	By \Cref{lem:moments} we gain additional orders of $\scp$, and we thus neglect the first summand in \eqref{eq:thirdderivY}. The remaining terms are bounded again using \Cref{lem:moments}.
	
	\smallskip 
	\noindent (b) In the general subquadratic case, we use that the action of a semiclassical pseudodifferential operator on a Gaussian wave packet can be approximated by a polynomial prefactor, see 
	\cite[Lemma~14 in \S2.3]{RobC21_book}. For any $\ell \in \N$ there exists a polynomial $\mathcal{Q}_\ell$ of degree $\ell$, such that
		\begin{equation}
		Hu  = \mathcal{Q}_\ell \, \vsol + \O(\scp^{(\ell+1)/2}),
		\end{equation} 
		i.e., we have
		\begin{equation}\label{eq:remainder-pot-sq}
		Hu - P_{u}(Hu) = \remPot_{\vsol,\ell}\, \vsol + \O(\scp^{(\ell+1)/2}),
		\end{equation} 
		with a remainder potential $\remPot_{\vsol, \ell}$.
		We now fix $\ell=2$ and denote the corresponding cubic remainder potential $\remPot_\vsol =\remPot_{\vsol, 2}$. The proof then works along the lines of the magnetic case.
\end{proof}
\section{Expectation values: proof of \Cref{lem:norm_energy_mom}}\label{sec:expv}

In this section we adapt the proofs of \cite[Section~3.2]{LasL20} on conservation properties to the \timedependent, magnetic case. Due to \timedependence, the energy will not be a conserved quantity.

Let $\sol$ be the exact solution of \eqref{eq:sproblem} and $\vsol$ the variational solution \eqref{eq:var} such that \eqref{eq:gauss_initialdata} holds.
%
%
%
%
%

\begin{proof}[Proof of \Cref{lem:norm_energy_mom}]
	The proof of norm conservation and the energy formula can be done in the same way as in \cite{LasL20}.
	 We only show the conservation of total linear and angular momentum. 
	 
	By \cite[Theorem~1.3]{Lub08_book} or \cite[Lemma~4.1]{FaoL06} it is sufficient to show that $\Ham(t)$ commutes with $P$ and $L$, respectively, for each $t \in [0,\etime]$.
	By \cite{LasL20} it follows that $\linMom \mgPot_{k_j} = 0$ for all $k \in \{1,\ldots,N\}$ and $j\in \{1,2,3\}$. We further calculate
	\begin{align}
	\linMom(\mgPot\cdot \nabla)\psi	&= \sum_{k=1}^N{\sum_{j=1}^3{(\linMom \mgPot_{k_j}) \partial_{k_j}\psi +  \mgPot_{k_j} \linMom \partial_{k_j}\psi}}= (\mgPot\cdot \nabla)\linMom\psi.
	\end{align}
	Furthermore, a tedious calculation shows that $ (\mgPot\cdot \nabla)\angMom\psi = \angMom(\mgPot\cdot \nabla)\psi$ if and only if
	\begin{align} 
	\sum_{l=1}^N{
		\begin{pmatrix}
		\mgPot_{{l_2}}\partial_{l_3} -\mgPot_{{l_3}}\partial_{l_2} \\
		\mgPot_{{l_3}}\partial_{l_1} - \mgPot_{{l_1}}\partial_{l_3}\\
		\mgPot_{{l_1}}\partial_{l_2} - \mgPot_{{l_2}}\partial_{l_1}
		\end{pmatrix}} \psi =
	\sum_{k=1}^N{\sum_{j=1}^3{\sum_{l=1}^N{\begin{pmatrix}
				x_{l_2}(\partial_{l_3}\mgPot_{k_j})\partial_{k_j} -x_{l_3}(\partial_{l_2}\mgPot_{k_j})\partial_{k_j} \\
				x_{l_3}(\partial_{l_1}\mgPot_{k_j})\partial_{k_j}  - x_{l_1}(\partial_{l_3}\mgPot_{k_j})\partial_{k_j} \\
				x_{l_1}(\partial_{l_2}\mgPot_{k_j})\partial_{k_j} -x_{l_2}(\partial_{l_1}\mgPot_{k_j})\partial_{k_j}
				\end{pmatrix}}}}\psi
	\end{align} 
	holds true. This condition 
	 is fulfilled if
	%
	\begin{equation}
	\partial_{l_m}\mgPot_{k_j} = \alpha \delta_{{l_m},{k_j}}\quad\mathrm{und} \quad \mgPot_{{l_n}} = \alpha x_{l_n}
	\end{equation}
	holds true for some $\alpha \in \R$, $j,n,m \in \{1,2,3\}$, and $k,l\in \{1,\ldots,N\}$ and thus, if $\mgPot(\cdot,x)= \alpha(\cdot) x$ holds.
\end{proof}


\section{Error bound for averages of observables: proof of \Cref{thm:obs} }\label{sec:obs}
In this section we give the proof of \Cref{thm:obs}. We proceed in three steps: 
First, we follow \cite[Section~6.7]{LasL20} and establish an integral representation for the error 
that involves a commutator with 
the time-evolved observable. Second, we prove Egorov's theorem for the time-evolution 
of observables in the general context of magnetic Schr\"odinger operators. Third, we derive 
a \semiclassical expansion of averages with respect to Gaussian wave packets. The combination 
of these steps then allows us to prove \Cref{thm:obs}. We  note that the \semiclassical expansion 
of the averages is crucial for improving the observable estimate in \cite[Theorem~3.5]{LasL20}.
This section applies for both the magnetic and the general subquadratic hamiltonian case. For better readability, some arguments will be provided for the magnetic case only, but with natural slight modifications they also apply for the general subquadratic case.

\subsection{Error representation}
We start with a useful a posteriori representation for the observable error. To this end,
let $\evof(t,s)$ be the evolution family given by \Cref{thm:wellposedness} and $\obs$ an  observable. We introduce the notation
\begin{equation}\label{eq:evof_obs}
\hobs(t,s) \coloneqq \evof(s,t)\obs\evof(t,s), \qquad t, s \in\R.
\end{equation}

\begin{lemma}\label[lemma]{lem:obs_bound_aux}
	Let $\sol$ be the solution of \eqref{eq:sproblem} and $\vsol$ the solution of \eqref{eq:var}. If the initial value $\sol_0 = \vsol_0\in \Mf$ is a \GWP with $\normLtwo{\vsol_0}=1$, then the error of the observables takes the form
	\begin{align}\label{eq:obs_bound_aux}
	&\bigl\langle \sol(t)|\obs\sol(t)\bigr\rangle
	-
	\bigl\langle \vsol(t)|\obs \vsol(t)\bigr\rangle 
	\\
	=~& 
	\int_0^t{\frac{1}{\ii\eps}\bigl\langle u(s) \Big| 
	\left( \overline\remPot_{\vsol(s)}\hobs(t,s)-\hobs(t,s)\remPot_{\vsol(s)}\right)\vsol(s)\bigr\rangle}\,\dd s,
	\end{align}
	where the remainder potential $\remPot_\vsol:\R^\dim\to\C$ depends on the Gaussian wave 
	packet~$\vsol$. In the general subquadratic case, it has been previously defined in \eqref{eq:remainder-pot-sq}. 
	In the magnetic Schr\"odinger case, it satisfies
	\begin{equation}\label{eq:remPot}
	\begin{aligned}
	\remPot_\vsol &= \auxPot(\pos) -\langle \auxPot\rangle_\vsol 
	+ \frac{\scp}{4} \tr(\IC^{-1}\langle \nabla^2 \auxPot\rangle_\vsol) 
	+ \left(\nabla \auxPot(\pos) - \langle \nabla \auxPot\rangle_\vsol\right)^T(x-\pos)\\
	&\quad + \frac{1}{2}(x-\pos)^T\left(\nabla^2\auxPot(\pos)-\langle \nabla^2 \auxPot\rangle_\vsol\right)(x-\pos) 
	+ R(\auxPot) ,
	\end{aligned} 
	\end{equation}
	with $\auxPot = \advPot + \mPot$ defined in \eqref{eq:advPot} and \eqref{eq:hamop}, respectively, and $R(\auxPot)$ being the remainder potential of the 
	quadratic Taylor expansion of $\auxPot$ around the point~$\pos$.
For the non-magnetic Schr\"odinger case $\mgPot = 0$, we have $\advPot = 0$ and 
$\remPot_\vsol:\R^\dim\to\R$. 
\end{lemma}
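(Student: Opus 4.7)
The plan is to introduce the auxiliary function
$f(s) \coloneqq \langle \vsol(s)\mid\hobs(t,s)\vsol(s)\rangle$ for $s\in[0,t]$.
Because $\evof$ is unitary and $\sol_0=\vsol_0$, we have
$f(0) = \langle\vsol_0\mid\evof(0,t)\obs\evof(t,0)\vsol_0\rangle = \langle\sol(t)\mid\obs\sol(t)\rangle$,
while $f(t)=\langle\vsol(t)\mid\obs\vsol(t)\rangle$ since $\hobs(t,t)=\obs$. Thus the observable error is the telescope $f(0)-f(t) = -\int_0^t f'(s)\,\dd s$, and the task reduces to computing $f'(s)$ and recognising the integrand as the commutator-type expression in \eqref{eq:obs_bound_aux}.

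To differentiate $f$ I apply the product rule in the three occurrences of $s$. For the two variational factors I substitute $\ii\scp\pt\vsol = \Pr(\Ham\vsol)$, and for the observable I use the Heisenberg-type evolution
\begin{equation*}
\ii\scp\pt\hobs(t,s) = [\Ham(s),\hobs(t,s)],
\end{equation*}
which follows by differentiating $\hobs(t,s)=\evof(s,t)\obs\evof(t,s)$ and using the propagator equations $\pt\evof(s,t) = \tfrac{1}{\ii\scp}\Ham(s)\evof(s,t)$ and $\pt\evof(t,s) = -\tfrac{1}{\ii\scp}\evof(t,s)\Ham(s)$ in the two time arguments. After exploiting the self-adjointness of $\Ham(s)$ to turn $\langle\vsol\mid\Ham\hobs\vsol\rangle$ into $\langle\Ham\vsol\mid\hobs\vsol\rangle$, the bare $\Ham$-contributions cancel against the projection ones, leaving only terms involving $\Ham\vsol-\Pr(\Ham\vsol)$. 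Writing this difference as $\remPot_\vsol\vsol$ with a (in general complex) multiplication operator $\remPot_\vsol$, and moving it over via $\langle\remPot_\vsol\vsol\mid v\rangle = \langle\vsol\mid\overline{\remPot_\vsol}v\rangle$, yields precisely the claimed commutator structure inside the integral.

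It remains to identify $\remPot_\vsol$ explicitly. In the general subquadratic case this is already accomplished by \eqref{eq:remainder-pot-sq}: the polynomial prefactor approximation of \cite[Lemma~14 in \S2.3]{RobC21_book} realises $\Ham\vsol - \Pr(\Ham\vsol)$ as a multiplication operator up to contributions absorbed into the cubic remainder. In the magnetic case I would decompose $\Ham\vsol = -\tfrac{\scp^2}{2}\Delta\vsol + \auxPot\vsol$ with $\auxPot = \advPot + \mPot$ given by \eqref{eq:advPot} and \eqref{eq:hamop}, observe that the Laplacian term is already a polynomial of degree at most two in $x-\pos$ times $\vsol$ and so belongs to the tangent space, and perform a second-order Taylor expansion of $\auxPot$ around $\pos$. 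The polynomial part of that expansion is likewise in $\Ts$, while the cubic remainder is $R(\auxPot)$. Subtracting from this the explicit projection formula of \Cref{prop:orthogonal-proj}, whose coefficients involve the Gaussian averages $\langle\auxPot\rangle_\vsol$, $\langle\nabla\auxPot\rangle_\vsol$, $\langle\nabla^2\auxPot\rangle_\vsol$ and the additional scalar term $-\tfrac{\scp}{4}\tr(\ICinv\langle\nabla^2\auxPot\rangle_\vsol)$, produces exactly the six-term expression \eqref{eq:remPot} for $\remPot_\vsol$.

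The time-derivative manipulation is pure bookkeeping once the Heisenberg evolution of $\hobs$ is in place. The only delicate step is the last one: carefully matching the point-value Taylor coefficients of $\auxPot$ against the averaged coefficients generated by \Cref{prop:orthogonal-proj}, in particular verifying that the scalar $\beta$-correction from the projection formula produces the $\tfrac{\scp}{4}\tr(\ICinv\langle\nabla^2\auxPot\rangle_\vsol)$ term in \eqref{eq:remPot} with the correct prefactor, and confirming that for $\mgPot=0$ the advection part $\advPot$ vanishes and $\remPot_\vsol$ reduces to a real-valued potential, consistently with the non-magnetic Schr\"odinger case.
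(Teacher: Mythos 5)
Your proposal is correct and follows essentially the same route as the paper: the fundamental theorem of calculus applied to $s\mapsto\langle \vsol(s)\mid\hobs(t,s)\vsol(s)\rangle$, the Heisenberg-type evolution of $\hobs(t,s)$, cancellation of the bare Hamiltonian terms against the projected ones via self-adjointness so that only $(\Id-P_{\vsol})H\vsol=\remPot_\vsol\vsol$ survives, and identification of $\remPot_\vsol$ by comparing the second-order Taylor polynomial of $\auxPot$ with the explicit orthogonal projection formula (the paper cites the projection formula of Lasser--Lubich, noting it extends to the complex-valued $\auxPot$). The only point worth making explicit, which the paper also flags, is that $\advPot$ is complex-valued because it contains $\wm$, so the projection formula must be checked to hold for complex potentials before writing $\langle\remPot_\vsol\vsol\mid v\rangle=\langle\vsol\mid\overline{\remPot_\vsol}v\rangle$.
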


\begin{proof}
	Let $\evof(t,s)$ be the evolution family, such that the exact solution of \eqref{eq:sproblem} is given by \eqref{eq:sol}. Using $\sol_0 = \vsol_0$ and $\evof(t,t) = \Id$ we calculate
	\begin{align}
	\langle u(t)|\obs u(t)\rangle &- \langle \sol(t)|\obs \psi(t)\rangle\\*[1ex]
	&= \langle u(t)|\evof(t,t)\obs\evof(t,t) u(t)\rangle - \langle \evof(t,0)u(0)|\obs \evof(t,0)u(0)\rangle 
	\\*[1ex]
	&= \langle u(t)|U(t,t)\obs \evof(t,t) u(t)\rangle - \langle u(0)|U(0,t)\obs \evof(t,0)u(0)\rangle \\*[1ex]
	&= \int_0^t{\frac{\partial}{\partial s}\langle u(s)|\underbrace{\evof(s,t)\obs\evof(t,s)}_{=\hobs(t,s)}u(s)\rangle}\,\dd s.
	\end{align}
	Employing the differential properties of the evolution family, that is, 
	$\ii\scp\partial_t \evof(t,s) = H(t)\evof(t,s)$ and $-\ii\scp\partial_t \evof(s,t) = \evof(s,t)H(t)$, 
	we obtain 
	\begin{align}
	\frac{\partial}{\partial s} \hobs(t,s)&= \frac{1}{\ii\eps}\bigl(\Ham(s)\evof(s,t)\obs\evof(t,s)-\evof(s,t)\obs\evof(t,s)\Ham(s)\bigr)\\
	&= \frac{1}{\ii\eps}\bigl(\Ham(s)\hobs(t,s)-\hobs(t,s)\Ham(s)\bigr).
	\label{eq:Heisenberg-A-tilde}
	\end{align}
	Since the variational evolution satisfies $\ii\scp\partial_t \vsol(t) = P_{\vsol(t)}\Ham(t)\vsol(t)$, we then have
	\begin{align}
	\frac{\partial}{\partial s}\langle u(s)|\hobs(t,s)u(s)\rangle &= 
	\frac{1}{\ii\scp} \left( \langle  (\Id-P_{\vsol(s)})\Ham(s)\vsol(s)| \hobs(t,s) \vsol(s)\rangle \right)\\
	 &\left.- \langle \vsol(s)| \hobs(t,s) (\Id - P_{\vsol(s)})\Ham(s) \vsol(s)\rangle\right).
	\end{align}	
	 We arrive at \eqref{eq:obs_bound_aux}, using that
	 \begin{align}
	 (\Id-P_{\vsol(s)})\Ham(s)\vsol(s) &=
	 X_{\vsol(s)}\vsol(s) - P_{\vsol(s)}(X_{\vsol(s)}\vsol(s))  = W_{u(s)}u(s).
	 \end{align}
	 The claimed form of the remainder potential $W_{\vsol(s)}:\R^\dim\to\C$ follows 
	 from \cite[Proposition 3.14]{LasL20}, 
	 since the proof of the projection formula there also applies for 
	 the potential function $X_{\vsol(s)}$ even though it is complex-valued. 
\end{proof}

\subsection{Egorov's theorem}
Further, to prove \Cref{thm:obs} we have to establish a variant of Egorov's theorem, which connects the time-evolved quantum observable~$\hobs(t,s)$, in case it originates from 
a \Weylquantized $\obs = \op(\clobs)$, with the evolution map 
of the classical Hamiltonian system. 
\alt{Since the classical Hamiltonian function $\cham(t)$ is subquadratic, the ordinary differential equation \eqref{eq:cldiff} has a globally Lip\-schitz continuous 
right-hand side, and the Picard--Lindel\"of theorem provides a unique global solution.}
%
Recall that since $\mgPot$ and $\mPot$ are sublinear and subquadratic, respectively, we obtain a unique global solution to the ordinary differential equation \eqref{eq:cldiff}.
We denote by 
\begin{align}
\flow^{t,s} : \R^{2\dim} \to \R^{2\dim},\quad (\varone_s,\vartwo_s) \mapsto (\varone_s(t), \vartwo_s(t))
\end{align}
the classical propagator, which maps initial values at time $s$ to the solution of \eqref{eq:cldiff} at time $t$.
For any $\pstup=(\varone,\vartwo)$, it satisfies the evolution equation 
\begin{align}\label{eq:reform_cham}
\pt\Phi^{t,s}(\pstup) &= -J(\nabla_\pstup \cham)(t,\Phi^{t,s}(\pstup)),\\
\Phi^{s,s}(\pstup) &= \pstup.
\end{align}
Both in the magnetic and the general subquadratic case, the classical propagator  $\Phi^{t,\tau}$ is a diffeomorphism with inverse $(\Phi^{t,\tau})^{-1} = \Phi^{\tau,t}$.
For time-independent, subquadratic Hamiltonians it is well-established that 
\[
\hobs(t,0) = \op\l(\egor{t}{0}\r) + \mathcal O(\scp^2).
\]
However, to the best of our knowledge, in the literature a proof of the Egorov approximation for the non-autonomous case is not available, and the proofs presented for example in 
\cite{BouR02}, \cite[Chapter~11]{Zwo12_book}, or \cite[Thm.~12]{RobC21_book} assume time-independent or compactly supported Hamiltonians and thus do not cover our more general situation. 
The main difficulties are the \timedependence of the Hamiltonian operator $\Ham(t)$, which prevents energy conservation, 
and the allowed sublinear growth of the observables.
\begin{prop}[\timedependent Egorov--theorem] \label{prop:timdep_egorov} 
	Let $\obs = \op(\clobs)$ be a quantum observable stemming from a smooth, sublinear classical observable $\clobs$ in the sense of \Cref{def:admissible-symbols}. Further, let $\auxobs:\R\times\R\times\R^{2d}\to\R$, $(t,s,\pstup)\mapsto \auxobs(t,s,\pstup)$ be defined by
		\begin{equation}\label{eq:egorov-obs}
		\auxobs(t,s,\pstup) = \egor{t}{s}(\pstup).
		\end{equation}
		We consider two cases.
	\begin{enumerate}
	\item The Hamiltonian operator stems from a classical, subquadratic  function $\cham$. Then, the observable given in \eqref{eq:egorov-obs} is sublinear and for all $\varphi\in L^2(\R^\dim)$ we have 
	\begin{align}\label{eq:egorov-estimate-h-sq}
	\normLtwobig{\bigl(\hobs(t,s) - \op\l(\auxobs(t,s)\r)\bigr)\varphi} \leq 
	C\, \scp^2\, \e^{C|t-s|}\, \normLtwo{\varphi}
	\end{align}
	for all $s,t\in\R$.
	\item The Hamiltonian operator is a magnetic Schr\"odinger operator. We assume that the observable given in \eqref{eq:egorov-obs} is of time-exponential growth in the following sense. There exists a smooth nonnegative function $\Gamma(t,s)\ge0$ such that for any $\alpha\in\N^{2\dim}$ there exists $C_\alpha>0$ with
	\begin{equation}\label{eq:sublinear_time}
	|\partial^{\alpha}_\pstup\auxobs(t,s,\pstup)| \le C_{\clobs,\alpha}\ \exp(|\alpha|\,\Gamma(t,s))
	\end{equation}
	for all $\pstup\in\R^{2\dim}$ and all $t,s\in\R$. Then, for any $\varphi\in L^2(\R^\dim)$ such that $\op(\pstup)\varphi\in L^2(\R^\dim)$, we then have
		\begin{align}
		\normLtwoBig{\Big(\hobs(t,s)- \op\l(\auxobs(t,s)\r)\varphi\Big)} \le 
		C\, \scp^2\, \e^{C|t-s|}\, \normLtwo{\op(\pstup)\varphi},
		\end{align}
	for all $s,t\in \R$.
	\end{enumerate}
	%
	\alt{for all $s,t\in\R$.} 
	The constant $C>0$ depends on derivative bounds of the potentials $\mgPot$, $\mltPot$ and the observable $\clobs$, 
	but not on $\scp,t,s$. In particular, $C=0$ for $\mgPot$ linear and $\mltPot$ quadratic. 
\end{prop}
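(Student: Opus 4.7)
\emph{Overall strategy.} The plan is to mimic the classical proof of Egorov's theorem, adapted to the non-autonomous setting. The three main ingredients are: (i)~compare the quantum and classical Heisenberg evolutions via a Duhamel identity for the difference $D(t,s) := \hobs(t,s) - \op(\auxobs(t,s))$; (ii)~use the Moyal expansion of the Weyl calculus to extract an explicit $\scp^2$ prefactor from the resulting source term; (iii)~bound the associated Calder\'on--Vaillancourt seminorms of the source symbol via a Gr\"onwall estimate on the variational equations of the classical flow. The time dependence enters only through $s\mapsto \Ham(s)$ and $s\mapsto \cham(s)$, so the argument runs with $s$ as the dynamical variable and $t$ held fixed.

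\emph{Heisenberg evolution and Duhamel.} I would first establish $\partial_s \auxobs(t,s) = \{\cham(s),\auxobs(t,s)\}$ with $\auxobs(t,t) = \clobs$, starting from the cocycle identity $\flowts{t}{s+\delta}=\flowts{t}{s}\circ \flow^{s,s+\delta}$, the Taylor expansion $\flow^{s,s+\delta}(\pstup) = \pstup + \delta\, J\nabla_\pstup \cham(s,\pstup) + \O(\delta^2)$, and the chain rule. Combined with \eqref{eq:Heisenberg-A-tilde} this yields
\begin{equation}
\partial_s D(t,s) = \tfrac{1}{\ii\scp}[\Ham(s),D(t,s)] + r(t,s),\qquad D(t,t)=0,
\end{equation}
with
\begin{equation}
r(t,s) = \tfrac{1}{\ii\scp}[\Ham(s),\op(\auxobs(t,s))] - \op(\{\cham(s),\auxobs(t,s)\}).
\end{equation}
Duhamel's principle and unitarity of $\evof$ then give
\begin{equation}
D(t,s)\varphi = \int_t^s \evof(s,\sigma)\,r(t,\sigma)\,\evof(\sigma,s)\varphi\,\dd\sigma.
\end{equation}
By the Moyal expansion of the symbol composition $\cham(s)\,\#\,\auxobs(t,s) - \auxobs(t,s)\,\#\,\cham(s)$, the principal $\O(\scp)$ term cancels against the Poisson bracket, leaving $r(t,s) = \scp^2\, \op(\remEgorov(t,s))$ for an explicit symbol $\remEgorov(t,s)$ that is a polynomial in derivatives of $\cham(s)$ of order $\ge 3$ and of $\auxobs(t,s)$ of order $\ge 3$.

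\emph{Control of the remainder symbol.} The next step is to bound the seminorms of $\auxobs(t,s) = \clobs\circ \flowts{t}{s}$ via the Fa\`a di Bruno formula. Sublinearity of $\clobs$ makes $\nabla^\alpha \clobs$ bounded for $|\alpha|\ge 1$, so it remains to estimate the higher variations $D^k\flowts{t}{s}$. The first variation solves a linear ODE with matrix $-J\nabla^2\cham(s,\flowts{t}{s})$, and higher variations satisfy inhomogeneous linear ODEs with inhomogeneities polynomial in lower variations; iterated Gr\"onwall gives the exponential-in-$|t-s|$ bounds of the type~\eqref{eq:sublinear_time}. In Case~(1), subquadraticity of $\cham$ makes $\nabla^2\cham$ uniformly bounded on phase space, so the variations of $\flowts{t}{s}$ are uniformly bounded in $\pstup$ with $\e^{C|t-s|}$ prefactors and $\auxobs(t,s)$ is sublinear in the sense of \Cref{def:admissible-symbols}. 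The Calder\'on--Vaillancourt theorem then gives $\|\op(\remEgorov(t,s))\|_{L^2\to L^2}\le C\,\e^{C|t-s|}$, and inserting into the Duhamel representation combined with unitarity of $\evof$ proves~\eqref{eq:egorov-estimate-h-sq}.

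\emph{Main obstacle -- the magnetic case.} The subtle point is Case~(2). The magnetic classical Hamiltonian \eqref{eq:clham} is \emph{not} subquadratic on phase space, since $\nabla^2_{\pos\pos}\cham = -D^2\mgPot\cdot \mom + \nabla^2 \mPot + \nabla^2(\tfrac12|\mgPot|^2)$ grows linearly in $\mom$. Consequently the coefficient matrix of the variational equation for $D\flowts{t}{s}$ is unbounded on phase space and the naive Gr\"onwall argument does not give uniform-in-$\pstup$ estimates; this is precisely why \eqref{eq:sublinear_time} must be \emph{assumed} in this case rather than derived. Likewise, the third $\pos$-derivatives of $\cham$ appearing in $\remEgorov$ contribute a factor linear in $\mom$, so $\op(\remEgorov(t,\sigma))$ is no longer uniformly bounded on $L^2$. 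Instead one obtains an estimate of the form $\|\op(\remEgorov(t,\sigma))\psi\|\le C\,\e^{C|t-\sigma|}\,\|\op(\pstup)\psi\|$, to be applied to $\psi = \evof(\sigma,s)\varphi$. Closing the argument then requires an auxiliary weighted energy estimate for \eqref{eq:sproblem} showing that $\|\op(\pstup)\evof(\sigma,s)\varphi\|$ is controlled by $\|\op(\pstup)\varphi\|$ with at most exponential growth in $|\sigma-s|$. Tracking this linear-in-$\mom$ factor through all intermediate symbol manipulations and establishing the weighted moment estimate for the magnetic evolution is, I expect, the technical heart of the proof.
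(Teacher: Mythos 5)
Your proposal follows essentially the same route as the paper: the Duhamel/Heisenberg comparison for $D(t,s)$, the Moyal expansion isolating an $\scp^2$ remainder symbol, Fa\`a di Bruno plus Gr\"onwall on the variational equations for the flow derivatives, Calder\'on--Vaillancourt in the subquadratic case, and, crucially, the correct identification that the magnetic Hamiltonian's failure to be subquadratic in $\vartwo$ forces both the assumption \eqref{eq:sublinear_time} and a weighted estimate on $\op(\pstup)\evof(\sigma,s)\varphi$. The one step you leave open is exactly what the paper supplies: $f(\sigma)=\op(\pstup)\evof(\sigma,s)\varphi$ solves the Schr\"odinger equation with source $[\op(\pstup),\Ham(\sigma)]\evof(\sigma,s)\varphi$, whose norm is bounded by $C\scp\normLtwo{f(\sigma)}$ thanks to the sublinearity of $\mgPot$ and subquadraticity of $\mPot$, so variation of constants and Gr\"onwall close the argument.
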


\begin{proof}
(1) We start by discussing the growth of the function $\auxobs(t,s,\pstup)$ for case (a). 
For first order derivatives \wrt $(\varone, \vartwo)$ of the classical propagator we have 
\begin{align}
D\flowts{t}{s} &= \Id + J^{-1} \int_s^t \nabla^2 \cham(\tau, \flowts{\tau}{s})\, D \flowts{\tau}{s}\,\dd \tau,
\end{align}
and thus 
\begin{align}
\norm{\infty}{D\flowts{t}{s}} &\le 1 + \int_s^t \sup_{\pstup\in\R^{2\dim}}\|\nabla_\pstup^2 \cham(\tau,\pstup)\| \norm{\infty}{D \flowts{\tau}{s}}\,\dd \tau.
\end{align}
Since the Hamiltonian function $\cham(t,\cdot)$ is subquadratic, we have 
	\begin{equation}
	\Gamma(t,s) \coloneqq \int_s^t \sup_{\pstup\in\R^{2\dim}}\|\nabla_\pstup^2 \cham(\tau,\pstup)\| \dd \tau<\infty,
	\end{equation}
	and by Gronwall's lemma
	\[
	\norm{\infty}{D\Phi^{t,s}}\le  \exp(\Gamma(t,s)). 
	\]
	Moreover, for any $\alpha\in\N^{2\dim}$ with $|\alpha|\ge 1$ there exists a constant $C_\alpha>0$ such that
	\[
	|\partial^{\alpha}_\pstup\Phi^{t,s}(\pstup)| \le C_\alpha \exp(|\alpha|\,\Gamma(t,s))
	\]
	for all $t,s\in\R$ and all $\pstup\in\R^{2\dim}$, see \cite[Lemma~2.2]{BouR02} for 
	a proof that literally applies to the non-autonomous case. Then, the same argument as for 
	\cite[Lemma~2.4]{BouR02} yields that for every $\alpha\in\N^{2\dim}$ with $|\alpha|\ge 1$ there exists a constant $C_{\clobs,\alpha}>0$ such that
	\[
	|\partial^{\alpha}_\pstup\auxobs(t,s,\pstup)| \le C_{\clobs,\alpha}\ \exp(|\alpha|\,\Gamma(t,s))
	\]
	for all $t,s\in\R$ and all $\pstup\in\R^{2\dim}$. In particular, $\auxobs(t,s,\cdot)$ is sublinear.

\alt{(1) We start by discussing the growth of the function $\auxobs(t,s,\pstup)$. Since the Hamiltonian function $\cham(t,\cdot)$ is subquadratic, we have 
\[
\Gamma(t,s) \coloneqq \int_s^t \sup_{z\in\R^{2\dim}}\|\nabla_\pstup^2 \cham(\tau,\pstup)\| \d\tau<\infty,
\] 
and there exists for any $\alpha\in\N^{2\dim}$ with $|\alpha|\ge 1$ a constant $C_\alpha>0$ such that
\[
|\partial^{\alpha}_\pstup\Phi^{t,s}(\pstup)| \le C_\alpha \exp(|\alpha|\,\Gamma(t,s))
\]
for all $t,s\in\R$ and all $\pstup\in\R^{2\dim}$, see \cite[Lemma~2.2]{BouR02} for 
a proof that literally applies to the non-autonomous case. Then, the same argument as for 
\cite[Lemma~2.4]{BouR02} yields that for every $\alpha\in\N^{2\dim}$ with $|\alpha|\ge 1$ there exists a constant $C_{\clobs,\alpha}>0$ such that
\[
|\partial^{\alpha}_\pstup\auxobs(t,s,\pstup)| \le C_{\clobs,\alpha}\ \exp(|\alpha|\,\Gamma(t,s))
\]
for all $t,s\in\R$ and all $\pstup\in\R^{2\dim}$. In particular, $\auxobs(t,s,\cdot)$ is sublinear.}

\smallskip
(2) Next we compare the operators $\op(\auxobs(t,s))$ and $\hobs(t,s)= \evof(s,t)\obs\evof(t,s)$. Since on the diagonal 
$\auxobs(t,t,\cdot) = \clobs$ and $\evof(s,s) = \Id$, we obtain similarly as for \eqref{eq:Heisenberg-A-tilde} 
\begin{align}
&  \hobs(t,s) - \op\l(\auxegor{t}{s}\r)\\
=&\int_s^t \evof(s,\tau) 
\Big(\frac{\ii}{\scp} \big[\Ham(\tau), \op\l(\auxegor{t}{\tau}\r)\big] +\op\l(\partial_\tau\auxegor{t}{\tau}\r) \Big)
\evof(\tau,s)
\,\dd \tau\\
=&\int_s^t \evof(s,\tau) 
\Big(\op(\{\cham(\tau), \auxegor{t}{\tau}\})  +\op\l(\partial_\tau\auxegor{t}{\tau}\r) \Big)
\evof(\tau,s)
\,\dd \tau 
+
\remEgorov(t,s) ,
\end{align}
where the last equation relies on the product rule of Weyl quantization \cite[Theorem]{RobC21_book}. Here,
\begin{align}
\{\cham(\tau), \auxegor{t}{\tau}\} &=
 \nabla_\pstup \cham(\tau)\cdot J\nabla_\pstup \auxegor{t}{\tau}
\end{align}
denotes the Poisson bracket of $\cham(\tau)$ and $\auxegor{t}{\tau}$. It remains to show that the integral vanishes and that the remainder $\remEgorov(t,s)$ is of order~$\scp^2$.
\smallskip

(3)  For the estimation of the remainder, we use that
\[
\remEgorov(t,s) = \scp^2 \int_s^t \evof(s,\tau) \op(\bm{r}(t,\tau)) \evof(\tau,s)\,\dd \tau,
\]
where $\bm{r}(t,\tau,\cdot)$ is a smooth function depending on the derivatives of the order $\ge 3$ of 
the function $\cham(\tau,\cdot)$ and of the sublinear $\auxobs(t,\tau,\cdot)$. 
In order to estimate
	\begin{align}
	\normLtwobig{\rho(t,s)\varphi} &\le  \scp^2 \int_s^t 
	\normLtwobig{\op(\bm{r}(t,\tau)) \evof(\tau,s)\varphi}\,\dd \tau,
	\end{align}
we investigate the above integrand.

$(i)$
If $\cham$ is subquadratic, then, due to the estimates given in (a), for all $\alpha\in\N_0^{2d}$ there exist 
$c_{1,\alpha},c_{2,\alpha}>0$ such that
\[
|\partial_\pstup^\alpha \bm{r}(t,\tau,\pstup)| \le c_{1,\alpha}\exp(c_{2,\alpha}|t-\tau|)
\]
for all $t,\tau\in\R$ and $\pstup\in\R^{2\dim}$, and the Calder\'on--Vaillancourt Theorem, see e.g. \cite[Theorem~4]{RobC21_book}, provides the claimed constant $C>0$ for part (a).

%
$(ii)$
In the magnetic case, we rewrite the remainder function 
\begin{equation}
\bm{r}(\varone,\vartwo)= \bm{r}(\cdot, \cdot, \varone,\vartwo) = \bm{b}_0(\varone,\vartwo) + \bm{b}(\varone,\vartwo)^T\vartwo,
\end{equation}
where $\bm{b}_0 : \R \times \R \times \R^{2\dim} \to \R$ and $\bm{b} : \R \times \R \times \R^{2\dim} \to \R^\dim$ are bounded with all their derivatives. For the first summand, we proceed as in the subquadratic case, using Calder\'on--Vaillancourt. For the second summand containing an unbounded linearity in $\vartwo$,
 we use the product rule and obtain that 
\begin{align}
\op\big(\bm{r}_2(\varone, \vartwo)\big) = -\op\big(\bm{b}(\varone,\vartwo)\big)\cdot \ii \scp \nabla + \O(\scp).
\end{align}
Then, the boundedness of $\bm{b}$ provides $C_{\bm{b}}>0$ such that
\begin{align}
\normLtwobig{\op(\bm{b}(t,\tau)) \scp\nabla \big(\evof(\tau,s)\varphi\big)} \le C_{\bm{b}}\,\normLtwobig{ \scp\nabla \big(\evof(\tau,s)\varphi\big)}.
\end{align}
In the next step, we analyse $\normLtwobig{ \scp\nabla \big(\evof(\tau,s)\varphi\big)}$.
%
\alt{In particular, due to the estimates given in (a), for all $\alpha\in\N_0^{2d}$ there exist 
$c_{1,\alpha},c_{2,\alpha}>0$ such that
\[
|\partial_\pstup^\alpha \bm{r}(t,\tau,\pstup)| \le c_{1,\alpha}\exp(c_{2,\alpha}|t-\tau|)
\]
for all $t,\tau\in\R$ and $\pstup\in\R^{2\dim}$. Therefore, \Cref{thm:wellposedness} in combination with the Calder\'on--Vaillancourt Theorem, see e.g. \cite[Theorem~4]{RobC21_book}, provides the claimed constant $C>0$ such that
\begin{align}
\normLtwobig{\rho(t,s)\varphi} &\le  \scp^2 \int_s^t 
\normLtwobig{\op(\bm{r}(t,\tau)) \evof(\tau,s)\varphi}\,\dd \tau\\
&\le C\, \scp^2\, \e^{C|t-s|}\, \normLtwo{\varphi}.
\end{align}}

\smallskip 
(4) Let $t\ge s$ and set $f(t) =  \op(\pstup) \evof(t,s)\varphi$. We argue as in the proof for 
\cite[Lemma~10.4]{Car21} and observe that $f(t)$ solves the perturbed magnetic Schr\"o\-dinger equation
	\begin{align}
	\ii\scp \pt f(t) &=  \op(\pstup)\Ham(t)\evof(t,s)\varphi =  \Ham(t)f(t) + \delta(t)
	\end{align}
	with source term
	\begin{align}
	\delta(t) &= [\op(\pstup), \Ham(t)]\evof(t,s)\varphi = \frac{\scp}{\ii}\begin{pmatrix}
	\op(\vartwo - \mgPot(\varone))\\
	\op\big(\nabla \mPot(\varone)-\nabla \mgPot(\varone) \cdot\vartwo \big)
	\end{pmatrix}
	\evof(t,s)\varphi,
	\end{align}
	where we used the product rule for the second equation. In the same spirit as in step~(3), we estimate
	\begin{align}
	\normLtwobig{\delta(t)} \le C \scp \normLtwobig{\op(\pstup)\evof(t,s)\varphi} = C\scp\normLtwobig{f(t)},
	\end{align}
	where we exploited the sublinearity of $\mgPot$ and that $\mPot$ is subquadratic. 
	By the variation of constants formula
	 followed by Gronwall's lemma, we obtain that
	\begin{equation}
	\normLtwobig{f(t)} \le \e^{C(t-s)}\normLtwobig{f(s)} = \e^{C(t-s)}\normLtwobig{\op(\pstup)\varphi}.
	\end{equation}

\smallskip

(5) In the following step we show that 	$\auxobs$
satisfies the transport equation
\begin{subequations} \label{eq:transport}
\begin{align+}
\partial_\tau \auxegor{t}{\tau} &= -\{\cham(\tau),\auxegor{t}{\tau}\},\\
\auxegor{t}{t} &= \clobs
\end{align+}
\end{subequations}
for $\tau\in[s,t]$. Then the integrand in question indeed vanishes, and we obtain
\begin{equation}
\hobs(t,s) = \op(\auxegor{t}{s}) + \mathcal O(\eps^2),
\end{equation}
as claimed. We rewrite the transport equation \eqref{eq:transport} as 
\begin{align}\label{eq:reform_transport}
\partial_\tau \auxegor{t}{\tau} &= J\nabla_\pstup \cham(\tau) \cdot\nabla_\pstup\auxegor{t}{\tau},\\
\auxegor{t}{t} &= \clobs.
\end{align}
The following argument crucially uses that $\Phi^{t,\tau}$ is a diffeomorphism with inverse $(\Phi^{t,\tau})^{-1} = \Phi^{\tau,t}$. We observe that 
$\auxobs(t,\tau,\flow^{\tau,t}(\pstup)) = \clobs(\pstup)$ for all $\pstup\in\R^{2\dim}$ and calculate
\begin{align}
0 &= \partial_\tau \clobs(\pstup)\\
&= \partial_\tau \auxobs(t,\tau,\flow^{\tau,t}(\pstup))\\
&=(\partial_\tau \auxobs)(t,\tau,\flow^{\tau,t}(\pstup)) 
-J (\nabla_\pstup \cham)(\tau,\flow^{\tau,t}(\pstup))\cdot(\nabla_\pstup \auxobs)(t,\tau,\flow^{\tau,t}(\pstup)),
\end{align}
where, we used in the last step, the chain rule and  
\eqref{eq:reform_cham}.
Since $\Phi^{\tau,t}$ is a diffeomorphism, this proves that 
$\auxobs(t,\tau)$ indeed solves the transport equation \eqref{eq:reform_transport}.
\end{proof}

\subsection{Averages with respect to Gaussian wave packets}
The a posteriori error representation of \Cref{lem:obs_bound_aux} involves an average with 
respect to the variational solution. By Egorov's theorem, \Cref{prop:timdep_egorov}, the time-evolved quantum observable can be approximated by the \Weylquantized classical observable evolved along the classical flow. We therefore derive an asymptotic expansion of averages of \Weylquantized operators with respect to Gaussian wave packets. For obtaining this expansion, the following phase space moments will be useful.
\begin{lemma}[Gaussian moments]\label{lem:mom_gauss}
We consider a Gaussian $u\in\mathcal M$ of unit norm, $\|\vsol\|=1$, with phase space center $\psc=(\pscone,\psctwo)\in\R^{2\dim}$ and width matrix 
	$\mathcal C\in\C^{\dim\times\dim}$. We denote by 
\begin{equation}\label{eq:rho-ell-wm}
\rho_\ell(\wm) =\pi^{-d} 
\int_{\R^{2\dim}} \pstup^\ell \,\exp(-\pstup\cdot \covm \pstup) \, \dd \pstup
\quad  \text{ with }\quad
	\covm = \begin{pmatrix} \IC + \RC \ICinv \RC & -\RC \ICinv\\ \ICinv\RC & \ICinv 
\end{pmatrix},
\end{equation}	
where $G \in \R^{2\dim\times 2\dim}$ is symmetric, positive definite and symplectic. Then, for any multi-index $\ell=(\ell_1,\ldots,\ell_{2\dim})\in\N_0^{2\dim}$, we have
	\begin{equation}
	\langle (\pstup-\psc)^\ell\rangle_\vsol  = \scp^{|\ell|/2} \rho_\ell(\wm).
	\end{equation}
If the length $|\ell|$ of the multi-index is odd, then we have $\langle (\pstup-\psc)^\ell\rangle_\vsol = 0$.
\end{lemma}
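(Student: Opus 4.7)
The plan is to use the Wigner function representation of Weyl averages. For the Weyl quantization of a polynomial symbol $a(\pstup)=(\pstup-\psc)^\ell$ and an $L^2$-normalised state $\vsol$, one has the identity
\[
\langle \vsol \mid \op(a)\vsol\rangle = \int_{\R^{2\dim}} a(\pstup)\,\mathcal W_\vsol(\pstup)\,\dd\pstup,
\]
where $\mathcal W_\vsol$ denotes the (semiclassical) Wigner transform of $\vsol$. Since the average in the statement is defined exactly as the expectation of $\op((\pstup-\psc)^\ell)$ with respect to $\vsol$, all that is needed is an explicit form of $\mathcal W_\vsol$ on Gaussian wave packets.

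My first step is to compute $\mathcal W_\vsol$ for the Gaussian $\vsol\in\Mf$ of unit norm with phase-space centre $\psc=(\pscone,\psctwo)$ and width matrix $\wm=\RC+\ii\IC$. A direct Gaussian integral (completing the square in the oscillatory integral definition of the Wigner transform) yields
\[
\mathcal W_\vsol(\pstup) = (\pi\scp)^{-\dim}\,\exp\!\Bigl(-\tfrac{1}{\scp}(\pstup-\psc)\cdot \covm (\pstup-\psc)\Bigr),
\]
with precisely the matrix $\covm$ stated in \eqref{eq:rho-ell-wm}. This is the main computational step; the block form of $\covm$ comes from writing $\ii\wm = \ii\RC-\IC$ and inverting the complex quadratic form $\Im(\wm)$ after pairing position and momentum variables. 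One also checks that $\covm$ is symmetric positive definite and symplectic (the latter because $\det\covm=1$ and its block structure reflects the metaplectic covariance of the Wigner transform). This block identity, including symplecticity, is the step I expect to require the most care, but it is a standard computation for variational Gaussians.

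Second, I substitute this Wigner function into the integral and change variables according to $w=(\pstup-\psc)/\sqrt{\scp}$. The Jacobian yields $\scp^{\dim}$, cancelling the prefactor $(\pi\scp)^{-\dim}$ up to $\pi^{-\dim}$, the monomial $(\pstup-\psc)^\ell$ produces the factor $\scp^{|\ell|/2}$, and the remaining integral is precisely
\[
\rho_\ell(\wm) = \pi^{-\dim}\int_{\R^{2\dim}} w^\ell \exp(-w\cdot \covm w)\,\dd w,
\]
which yields $\langle (\pstup-\psc)^\ell\rangle_\vsol = \scp^{|\ell|/2}\,\rho_\ell(\wm)$ as claimed. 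Finite\-ness of $\rho_\ell(\wm)$ for all multi-indices follows from positive definiteness of $\covm$.

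Finally, for $|\ell|$ odd the integrand $w^\ell \exp(-w\cdot\covm w)$ is an odd function on $\R^{2\dim}$, so by symmetry $\rho_\ell(\wm)=0$ and hence $\langle (\pstup-\psc)^\ell\rangle_\vsol=0$. The key difficulty, as noted, is the explicit identification of $\covm$; once that matrix is in hand, the rest of the argument is just a substitution and an odd-function symmetry.
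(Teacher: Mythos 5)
Your proposal is correct and follows essentially the same route as the paper: both express the average as the phase-space integral of the symbol against the Gaussian Wigner function $\mathcal W_\vsol(\pstup) = (\pi\scp)^{-\dim}\exp(-\tfrac1\scp(\pstup-\psc)\cdot\covm(\pstup-\psc))$, rescale by $\sqrt{\scp}$ to extract the factor $\scp^{|\ell|/2}$, and invoke odd symmetry for $|\ell|$ odd. The only difference is that you sketch the derivation of the Wigner function and the properties of $\covm$ directly, whereas the paper cites them from the literature.
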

\begin{proof}
The claimed representation becomes evident, when using the Wigner function of the 
Gaussian wave packet $\vsol$. The Wigner function of a Gaussian wave packet centered in $\psc$ satisfies
	\[
	\mathcal W_\vsol(\pstup) = (\pi\scp)^{-\dim} \exp(-\tfrac1\scp (\pstup-\psc)\cdot \covm(\pstup-\psc)),
	\] 
	where the matrix $\covm$
	is symplectic, symmetric, positive definite, 
	see \cite[Proposition~6.15]{LasL20}. The average of any \Weylquantized observable can be written as the phase space integral of the symbol versus the Wigner function, 
	see for example \cite[Theorem~6.5]{LasL20}. In particular, 
	\begin{align}
	\langle (\pstup-\psc)^\ell\rangle_\vsol 
	&= \int_{\R^{2\dim}} (\pstup-\psc)^\ell  \,\mathcal W_u(\pstup) \, \dd \pstup\\\label{eq:gauss_mom}
	&= (\pi\scp)^{-\dim} \int_{\R^{2\dim}} (\pstup-\psc)^\ell  \exp(-\tfrac1\scp (\pstup-\psc)\cdot \covm(\pstup-\psc)) \, \dd \pstup\\
	&=  \pi^{-\dim} \scp^{|\ell|/2} \int_{\R^{2\dim}} \pstup^\ell  \exp(-\pstup\cdot \covm\pstup) \, \dd \pstup, 
	\end{align}
	where we have used that symplecticity implies $\det(\covm) = 1$.
	 We observe, that if the length $|\ell|$ of the multi-index is odd, then the above integral vanishes,  and consequently $\langle (\pstup-\psc)^\ell\rangle_\vsol = 0$ as well. 
\end{proof}

We now use these 
moments for expanding Gaussian averages with respect to general observables. 
\begin{prop}[Gaussian averages]\label{prop:av_gauss}
	We consider a Gaussian $u\in\mathcal M$ of unit norm, $\|\vsol\|=1$, with phase space center $\psc=(\pscone,\psctwo)\in\R^{2\dim}$ and complex width matrix 
	$\mathcal C\in\C^{\dim\times\dim}$. Then, for any smooth function 
	$\clobs:\R^{2\dim}\to\R$ with bounded sixth order derivatives, 
	\[
	\langle \clobs\rangle_\vsol = \clobs(\psc) + \scp f_2(\clobs,\wm) + \scp^2 f_4(\clobs,\wm) + \rho^\scp(\clobs,\wm),
	\]
	where
	\begin{equation}
	f_k(\clobs,\wm) = \sum_{|\ell|=k} \frac{1}{\ell !}\, \partial^\ell\clobs(\psc) \,\rho_\ell(\wm),
	\qquad k=2,4. 
	\end{equation}
	The second order contribution satisfies
	\begin{equation}
	f_2(\clobs,\wm) = \tfrac14 \tr(\nabla^2\clobs(\psc)_\wm \,\IC^{-1})
	\end{equation}
	with
	\begin{equation}\label{eq:hessian-symbol-wm}
	\nabla^2\clobs(\psc)_\wm = 
	\begin{pmatrix}\Id & \wm^*\end{pmatrix} \nabla^2\clobs(\psc) 
	\begin{pmatrix}\Id\\ \wm\end{pmatrix}\in\C^{d\times d}.
	\end{equation}
	The remainder satisfies $|\rho^\scp(\clobs,\wm)|\le C \scp^3$ with a constant $C>0$ that only depends on sixth order derivatives of $\clobs$ as well as on the width matrix $\wm$.
\end{prop}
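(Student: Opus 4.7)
My plan is to combine the Wigner function representation
$\langle\clobs\rangle_\vsol = \int_{\R^{2\dim}}\clobs(\pstup)\,\mathcal W_\vsol(\pstup)\,\dd\pstup$
already used in the proof of \Cref{lem:mom_gauss} with a Taylor expansion of the symbol $\clobs$ around the phase space center $\psc$. I expand
\[
\clobs(\pstup) = \sum_{|\ell|\le 5}\frac{1}{\ell!}\,\partial^\ell\clobs(\psc)\,(\pstup-\psc)^\ell + R_6(\psc,\pstup-\psc),
\]
where the Lagrange-type remainder satisfies $|R_6(\psc,w)|\le C\sup_{|\alpha|=6}\|\partial^\alpha\clobs\|_\infty\,|w|^6$. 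Integrating termwise against $\mathcal W_\vsol$ reduces everything to the Gaussian moments $\langle(\pstup-\psc)^\ell\rangle_\vsol = \scp^{|\ell|/2}\rho_\ell(\wm)$ supplied by \Cref{lem:mom_gauss}.

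Because odd Gaussian moments vanish (again by \Cref{lem:mom_gauss}), only the contributions with $|\ell|\in\{0,2,4\}$ survive, and they reproduce exactly the claimed terms $\clobs(\psc)$, $\scp f_2(\clobs,\wm)$, and $\scp^2 f_4(\clobs,\wm)$. For the remainder, applying \Cref{lem:mom_gauss} with $|\ell|=6$ yields $\int_{\R^{2\dim}}|\pstup-\psc|^6\,\mathcal W_\vsol(\pstup)\,\dd\pstup = \O(\scp^3)$, so $|\rho^\scp(\clobs,\wm)|\le C\scp^3$ with a constant depending only on sixth-order derivatives of $\clobs$ and on $\wm$.

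The substantive remaining step is the derivation of the compact trace identity for $f_2$. Since $\covm$ is symplectic with $\det\covm=1$, a direct computation of the Gaussian second moment identifies $(\rho_{e_i+e_j}(\wm))_{i,j}=\tfrac12\covm^{-1}$, whence
$f_2(\clobs,\wm) = \tfrac14\tr(\nabla^2\clobs(\psc)\,\covm^{-1})$.
A block inversion of $\covm$, for which the Schur complement of the lower-right block $\ICinv$ collapses to $\IC$, gives
\[
\covm^{-1} = \begin{pmatrix}\ICinv & \ICinv\RC \\ \RC\ICinv & \IC+\RC\ICinv\RC\end{pmatrix}.
\]
Expanding $\wm=\RC+\ii\IC$ and $\wm^*=\RC-\ii\IC$ then produces
\[
\begin{pmatrix}\Id\\ \wm\end{pmatrix}\ICinv\begin{pmatrix}\Id & \wm^*\end{pmatrix} = \covm^{-1} + \ii J,
\]
with $J$ the standard symplectic matrix from \eqref{eq:PQSym}. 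Since $\nabla^2\clobs(\psc)$ is real symmetric while $J$ is antisymmetric, $\tr(\nabla^2\clobs(\psc)\,\ii J)=0$, and cyclicity of the trace then yields the desired identity $\tr(\nabla^2\clobs(\psc)_\wm\,\ICinv) = \tr(\nabla^2\clobs(\psc)\,\covm^{-1})$.

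The main obstacle is not the Taylor expansion or the remainder estimate, both of which reduce routinely to \Cref{lem:mom_gauss}, but this final algebraic identification: one has to observe that the only imaginary contribution in the complex matrix $\begin{pmatrix}\Id\\ \wm\end{pmatrix}\ICinv\begin{pmatrix}\Id & \wm^*\end{pmatrix}$ is antisymmetric and therefore disappears under the trace against the real symmetric Hessian, which is what makes the clean real-valued formula for $f_2$ emerge.
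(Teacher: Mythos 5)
Your proposal is correct and follows essentially the same route as the paper: Taylor expansion of the symbol to sixth order, termwise integration against the Wigner function using the moments of \Cref{lem:mom_gauss} (with odd moments vanishing), and an $\O(\scp^3)$ bound on the remainder via the sixth-order moments. The only deviation is in the final algebraic step for $f_2$: you obtain $\tr(\nabla^2\clobs(\psc)_\wm\,\ICinv)=\tr(\nabla^2\clobs(\psc)\,\covm^{-1})$ from the identity $\begin{pmatrix}\Id\\ \wm\end{pmatrix}\ICinv\begin{pmatrix}\Id & \wm^*\end{pmatrix}=\covm^{-1}+\ii J$ together with the antisymmetry of $J$ against the real symmetric Hessian, whereas the paper expands the Hessian blockwise and matches terms --- the two computations are equivalent, and yours is arguably the cleaner one.
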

\begin{proof}
	We start by Taylor expanding the symbol around $\psc$ with sixth order remainder,
	\begin{equation}		
	\clobs(\pstup) = \sum_{|k|\le 5} \frac{1}{k!} \partial^k \clobs(\psc) (\pstup-\psc)^k + \bm r_6(\pstup;\psc),
	\end{equation}
	where
	\begin{equation}
	\bm r_6(\pstup;\psc) = \sum_{|k|=6} r_k(\pstup;\psc) (\pstup-\psc)^k, \qquad  r_k(\pstup;\psc) = \frac{6}{k!}\int_0^1 (1-\vartheta)^5 \partial^k \clobs(\psc + \vartheta(\pstup-\psc)) \ \dd \vartheta.
	\end{equation}
We have
\begin{equation}
\langle \bm r_6(\pstup;\psc)\rangle_u = \sum_{|k|=6} \int_{\R^{2\dim}} r_k(\pstup;\psc) \,(\pstup-\psc)^k\, \mathcal W_\vsol(\pstup) \,\dd \pstup.
\end{equation}
	Therefore, using \Cref{lem:mom_gauss}, 
	\begin{align}
	\langle \clobs\rangle_u 
	&=\clobs(\psc) + \scp f_2(\clobs,\wm) + \scp^2 f_4(\clobs,\wm) + \langle \bm r_6(\pstup;\psc)\rangle_u,
	\end{align}
	and, with the same substitution as in the proof of \Cref{lem:mom_gauss}, we bound
	\begin{align}
	\left|\langle \bm r_6(\pstup;\psc)\rangle_u\right| \le C(\clobs,\wm) \, \scp^3 \quad\text{ with }\quad  C(\clobs,\wm) =
		\sum_{|k|=6} \|r_k(\cdot;\psc)\|_\infty\
		\abs{\rho_k(\wm)}.
	\end{align}
	The constant $C(\clobs,\wm)>0$  depends on fourth order derivatives of $\clobs$ and on the width matrix $\wm$. It remains to rewrite the second order contribution as
	\begin{align}
	f_2(\clobs,\wm) &= 
	\pi^{-\dim} \sum_{|\ell|=2} \frac{1}{\ell !} \partial^\ell \clobs(\psc)  \int_{\R^{2d}} (G^{-1/2}\pstup)^\ell  
	\exp(-|\pstup|^2) \, \dd \pstup \\
	&= \frac{1}{2}\ \pi^{-\dim} \int_{\R^{2\dim}} \pstup \cdot G^{-1/2}\nabla^2\clobs(\psc)G^{-1/2}\pstup\   
	\exp(-|\pstup|^2) \, \dd \pstup \\*[1ex]
	&= \frac{1}{4}\, \tr(\nabla^2 \clobs(\psc) G^{-1}).
	\end{align}
	Since $G$ is symplectic and symmetric, its inverse satisfies 
	\begin{align}
	G^{-1} &= JGJ^{-1}
	= \begin{pmatrix}  \ICinv & \ICinv\RC \\ \RC\ICinv &  \IC + \RC \ICinv \RC
	\end{pmatrix}.
	\end{align} 
	We decompose the Hessian $\nabla^2 \clobs(\psc)$ in block form as
	\begin{equation}\label{eq:hess-block-form}
	\nabla^2\clobs(\psc) = \begin{pmatrix}A & B\\ B^T & D\end{pmatrix}.
	\end{equation}
	Using the cyclicity of the trace, we calculate that
	\begin{align}
	\tr(\nabla^2 \clobs(\psc) G^{-1}) 
	&= \tr((A+B\RC + \RC B^T + \IC D\IC + \RC D\RC)\ICinv)\\
	&= \tr(\nabla^2\clobs(\psc) _{\wm} \, \ICinv),
	\end{align}
	where $\nabla^2\clobs(\psc) _{\wm}$ was defined in \eqref{eq:hessian-symbol-wm} and has the form
	\begin{align}
	\nabla^2\clobs(\psc) _{\wm} 
	&= A+B\wm+ \wm^*B^T + \wm^*D\wm,
	\end{align}
	which gives the claim.
\end{proof}

For our analysis of the observable error, we will use \Cref{prop:av_gauss} also for observables 
that are products of two functions. One of the factors will have a controlled \semiclassical 
expansion, when evaluated in the position center of the variational solution. 

%
\begin{corollary}[Gaussian averages] \label{cor:av_gauss} 
In the situation of \Cref{prop:av_gauss} applied to a sublinear classical observable 
$\clobs:\R^{2\dim}\to\R$, we additionally consider a smooth and subquadratic function $b^\scp:\R^\dim\to\R$, $x\mapsto b^\scp(x)$. Then, 
\begin{equation}
f_2(b^\scp,\wm)  = \tfrac14 \tr(\nabla^{2} b^\scp(\pos)\IC^{-1}).
\end{equation}

\begin{enumerateletters}
\item
If the function satisfies 
\[
b^\scp(\pos), \nabla b^\scp(\pos) = \mathcal O(\scp),
\]  
then 
\[
	\langle \clobs b^\scp\rangle_u = \clobs(\psc) \left( b^\scp(\pos) + 
	\scp f_2(b^\scp,\wm)\right) + 
	\mathcal O(\scp^2).
\]
\item
If the function satisfies 
\[
b^\scp(\pos) = \mathcal O(\scp^2),\quad
\nabla b^\scp(\pos),\nabla^2 b^\scp(\pos) = \mathcal O(\scp),
\]  
then 
\begin{align}
\langle \clobs b^\scp\rangle_\vsol &=
\clobs(\psc)\left( b^\scp(\pos) + \scp f_2(b^\scp,\wm) + \scp^2 f_4(b^\scp,\wm)\right)\\*[1ex]
&+\scp F_{1,1}(\clobs,b^\scp,\wm) + \scp^2 F_{1,3}(\clobs,b^\scp,\wm) + \mathcal O(\scp^3)
\end{align}
with 
\begin{align}
\quad
F_{1,n}(\clobs,b^\scp,\wm) =
\sum_{|\ell|=n+1} \sum_{\beta\le\ell, |\beta|=1} \frac{1}{(\ell-\beta)!}\ \partial^\beta\clobs(\psc)\ \partial^{\ell-\beta} b^\scp(\pos)\ \rho_\ell(\wm), \quad n=1,3.
\end{align}
\end{enumerateletters}
\end{corollary}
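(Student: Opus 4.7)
The plan is to apply \Cref{prop:av_gauss} directly to the phase-space symbol $\clobs\cdot b^\scp$, viewing $b^\scp(\varone)$ as a smooth function on $\R^{2\dim}$ that happens to be independent of the momentum variable $\vartwo$. Since $\clobs$ is sublinear and $b^\scp$ is smooth and subquadratic, the product has bounded derivatives from second order onward, so the hypotheses of the Proposition are satisfied. For the opening identity $f_2(b^\scp,\wm)=\tfrac14\tr(\nabla^2 b^\scp(\pos)\IC^{-1})$, I would note that the phase-space Hessian at $\psc=(\pos,\mom)$ has the block form
\[
\nabla^2 b^\scp(\psc) = \begin{pmatrix} \nabla^2 b^\scp(\pos) & 0 \\ 0 & 0 \end{pmatrix},
\]
so in the decomposition \eqref{eq:hess-block-form} we have $A=\nabla^2 b^\scp(\pos)$ and $B=D=0$. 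Substituting into \eqref{eq:hessian-symbol-wm} yields $\nabla^2 b^\scp(\psc)_\wm=\nabla^2 b^\scp(\pos)$, and the formula for $f_2$ in \Cref{prop:av_gauss} gives the claim.

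For parts (a) and (b), the strategy is to Taylor expand $\clobs\,b^\scp$ at $\psc$ and apply the Leibniz rule to each Taylor coefficient,
\[
\tfrac{1}{\ell!}\partial^\ell(\clobs\,b^\scp)(\psc)=\sum_{\beta\le\ell}\tfrac{1}{\beta!(\ell-\beta)!}\,\partial^\beta\clobs(\psc)\,\partial^{\ell-\beta}b^\scp(\pos),
\]
with the crucial observation that $\partial^{\ell-\beta}b^\scp(\pos)=0$ whenever $\ell-\beta$ has a nonzero momentum component. Invoking \Cref{prop:av_gauss} (equivalently, \Cref{lem:mom_gauss}) each such term contributes at size $\scp^{|\ell|/2}\,|\partial^\beta\clobs(\psc)|\,|\partial^{\ell-\beta}b^\scp(\pos)|$, vanishing for $|\ell|$ odd. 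The combinatorial factor $1/(\beta!(\ell-\beta)!)$ with $|\beta|=1$ is exactly the normalization appearing in the definition of $F_{1,n}$.

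In part (a), the assumptions $b^\scp(\pos),\nabla b^\scp(\pos)=\O(\scp)$ make the $\beta=\ell$ contribution (carrying $b^\scp(\pos)$) and the $|\ell-\beta|=1$ contribution (carrying $\nabla b^\scp(\pos)$) both of size $\O(\scp^2)$ and hence absorbed in the remainder. The only $\O(\scp)$-size term that survives is the $\beta=0$, $|\ell|=2$ contribution, which collects into $\clobs(\psc)\,\scp f_2(b^\scp,\wm)$; combined with the zeroth-order Taylor term $\clobs(\psc)\,b^\scp(\pos)$, this is the claimed expansion. Part (b) uses the same mechanism at one higher order: retaining $\beta=0$ contributions for $|\ell|\in\{2,4\}$ yields $\clobs(\psc)(\scp f_2(b^\scp,\wm)+\scp^2 f_4(b^\scp,\wm))$, and retaining $|\beta|=1$ contributions for $|\ell|\in\{2,4\}$ yields $\scp F_{1,1}+\scp^2 F_{1,3}$; every remaining combination is $\O(\scp^3)$ by virtue of $b^\scp(\pos)=\O(\scp^2)$ and $\nabla b^\scp(\pos),\nabla^2 b^\scp(\pos)=\O(\scp)$ together with the $\scp^{|\ell|/2}$ scaling of the moments.

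The main obstacle is the case-by-case bookkeeping in part (b): one must enumerate all multi-index pairs $(\ell,\beta)$ with $|\ell|\le 5$ and verify that each product $\partial^\beta\clobs(\psc)\,\partial^{\ell-\beta}b^\scp(\pos)$ either appears among the stated leading terms or is bounded by $\scp^3$ via the hypothesized smallness at $\pos$. No new analytic ingredient is needed beyond \Cref{prop:av_gauss} and \Cref{lem:mom_gauss}; the sixth-order remainder of the Taylor expansion is controlled exactly as in \Cref{prop:av_gauss} since $\clobs\,b^\scp$ inherits bounded high-order derivatives from $\clobs$ sublinear and $b^\scp$ smooth and subquadratic.
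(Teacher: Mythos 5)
Your proposal is correct and takes essentially the same route as the paper: the trace identity follows from the vanishing of the $B$ and $D$ blocks in the phase-space Hessian of $b^\scp$, and parts (a) and (b) follow from the Leibniz rule applied to the Taylor coefficients of $\clobs\,b^\scp$ combined with the $\scp^{|\ell|/2}$ scaling and odd-order vanishing of the Gaussian moments from \Cref{lem:mom_gauss}. One minor imprecision: the product $\clobs\,b^\scp$ does not literally have bounded derivatives from second order onward (e.g.\ $\partial^2\clobs\cdot b^\scp$ can grow quadratically), so \Cref{prop:av_gauss} applies only after noting that its remainder estimate extends to polynomially bounded symbols --- which is also how the paper implicitly proceeds.
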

\begin{proof}
For the trace formula, it is enough to observe that the matrices $B$ and $D$ in the block matrix \eqref{eq:hess-block-form} vanish, since $b^\scp$ only depends on $x$.

 For proving the expansions of the averages, we crucially use the Leibniz formula for the $\ell$th derivative of the product, that is, 
\[
\partial^\ell(\clobs b^\scp)(\psc) = 
\sum_{\beta\le\ell} \binom{\ell}{\beta}\ \partial^\beta\clobs(\psc)\ \partial^{\ell-\beta} b^\scp(\pos)
\]
for any multi-index $\ell\in\N_0^{2\dim}$. 
\begin{enumerateletters}
\item
In the situation of statement (a), we only consider $|\ell| =2$ and obtain
\[
\scp \partial^\ell(\clobs b^\scp)(\psc) = \scp \clobs(\psc) \partial^\ell b^\scp(\pos) + \mathcal O(\scp^2).
\]
Then, \Cref{prop:av_gauss} implies
\begin{align}
\langle \clobs b^\scp\rangle_u &=  \clobs(\psc) b^\scp(\pos) + \scp f_2(\clobs b^\eps,\wm) 
+ \mathcal O(\scp^2)\\
&=\clobs(\psc) \left( b^\scp(\pos) + \scp f_2(b^\scp,\wm)\right) + 
	\mathcal O(\scp^2).
\end{align}
\item In the situation of statement (b), we aim for a higher order expansion and need to consider 
second and fourth derivatives.
In the same spirit as the proof of part (a), \Cref{prop:av_gauss} implies the claimed expansion of the average 
$\langle \clobs b^\scp\rangle_\vsol$.
%
\end{enumerateletters}
\end{proof}

\begin{remark}\label{rem:av_gauss} 
The estimates of \Cref{cor:av_gauss} also apply to functions $b^\eps(x)$ of the form 
$b^\scp(x) = B^\scp(x)\cdot (x-q)$, where $B^\scp(x)\in\R^\dim$ is sublinear with uniform bounds in $\scp$. 
A derivative $\partial^\alpha b^\scp(x)$  
additively decomposes into a bounded function and the function $\partial^\alpha B^\scp(x)\cdot (x-q)$, 
which can be controlled by the arguments used in the proof of \Cref{thm:L2} (a).
\end{remark}

\subsection{Proof of \Cref{thm:obs}}
We now have everything at hand to estimate the error of observables
and to conclude our final main result. In the following proof, we use 
\Cref{ass:onpotentials} on the potentials and the representation \eqref{eq:remPot} 
of the remainder potential $\remPot_\vsol$
only to the extent that the arguments literally also apply to the dynamics induced by general subquadratic hamiltonians. 
 Thus, the proof improves known observable error estimates in full generality.
 
\begin{proof}[Proof of \Cref{thm:obs}]
By \Cref{lem:obs_bound_aux} we only have to bound the commutator in
the representation formula \eqref{eq:obs_bound_aux}. 
			%

\smallskip 
\noindent(a) We start by recalling, that in the proof of \Cref{thm:L2}, we have estimated 
			\begin{align}\label{eq:Wu_L2}
			\normLtwo{\remPot_{\vsol(s)}\vsol(s)} = \normLtwo{(\Id-P_{\vsol(s)})\Ham(s)\vsol(s)} 
			\le C\scp^{3/2}.
			\end{align}
%
%

\smallskip
\noindent(b) We denote $\auxobs(t,s) = \egor{t}{s}$ and expand
		\begin{align}
		&\frac{1}{\ii\eps}\big\langle  \overline\remPot_{\vsol(s)}\hobs(t,s)-\hobs(t,s)\remPot_{\vsol(s)}\big\rangle_{\vsol(s)}\\
		=~&	\frac{1}{\ii\eps}\big\langle  \overline\remPot_{\vsol(s)}\op\l(\auxobs(t,s)\r)-\op\l(\auxobs(t,s)\r)\remPot_{\vsol(s)}\big\rangle_{\vsol(s)} +r_1(s,t).
			\end{align}
		%
		%
		Using first Cauchy-Schwarz and then \eqref{eq:Wu_L2} together with \Cref{prop:timdep_egorov} and norm conservation, we bound the remainder by 
		\begin{equation}
		\begin{aligned}
		|r_1(s,t)| &\leq \frac{2}{\scp}	\normLtwo{\remPot_{\vsol(s)}\vsol(s)} \normLtwobig{\bigl(\hobs(t,s) -\op\l(\auxobs(t,s)\r)\bigr)\vsol(s)}
		%
		%
		%
		\leq c\, \scp^{5/2}.
		\end{aligned}
		\end{equation}
		%
%
%

\smallskip
\noindent(c)		
	    As in the proof of \Cref{prop:timdep_egorov}, we use the product rule of Weyl calculus and expand the commutator. For notational simplicity, we suppress the dependence on $t$ and $s$. We have by symmetry of the real part \wrt the $L^2$ scalar product and anti-symmetry of the Poisson bracket
%
	\begin{align}
	&\big\langle\overline\remPot_{\vsol}\,\op\l(\auxobs\r)-\op\l(\auxobs\r)\remOp_{\vsol}\big\rangle_{\vsol}\\
	~=~&\frac{2}{\ii} \,\big\langle\im\remPot_{\vsol}\auxobs\big\rangle_{\vsol} 
	+ \frac{\scp}{\ii} \big\langle\{\re\remPot_{\vsol},\auxobs\}\big\rangle_{\vsol}
	+\frac{\scp^2}{4\ii} \big\langle\nabla^2\im\remPot_{\vsol}J\nabla^2\auxobs J\big\rangle_{\vsol}
	 +	\mathcal O(\scp^3),
	\end{align}
where the constant in $\O(\scp^3)$ depends on phase space derivatives of the remainder potential $\remPot_{\vsol(s)}$ and of $\auxobs(t,s)$ of the order $\ge 3$. 	Since 
	$\auxobs(t,s)$
is sublinear and $\remPot_{\vsol(s)}$ consists of subquadratic summands and a non-subquadratic summand which can be handled by \Cref{rem:av_gauss},
	the Calder\'on--Vaillancourt Theorem applies for the remainder term.
	We will prove below that
	\begin{subequations}
	\begin{align}
	&\big\langle\op\l(\im\remPot_{\vsol}\auxobs\r)\big\rangle_{\vsol} = \O(\scp^3), \label{eq:obs-rem-imaginary-clobs-prod}\\
		&\big\langle\op\l(\{\re\remPot_{\vsol},\auxobs\}\r)\big\rangle_{\vsol} = \mathcal O(\scp^2), \label{eq:obs-rem-real-poiss}\\
	&\big\langle\op\l(\nabla^2\im\remPot_{\vsol}J\nabla^2\auxobs J\r)\big\rangle_{\vsol} = \mathcal O(\scp),\label{eq:obs-rem-imaginary-hess}
	\end{align}
	\end{subequations}
	which allows us to conclude that
	\begin{align}
	\frac{1}{\ii\eps}\big\langle \overline\remPot_{\vsol}\hobs-\hobs\remOp_{\vsol}\big\rangle_{\vsol}
	= \O(\scp^2).
	\end{align}
	In order to do so, we first aim at the application of \Cref{cor:av_gauss} statement (a) for $b^\scp = \partial_j\re\remPot_\vsol$ and statement (b) for $b^\eps = \im\remPot_\vsol$; 
	see also \Cref{rem:av_gauss} for the non-subquadratic terms in $\remPot_\vsol$.  

\smallskip
\noindent (d) From now on, we notationally focus on the magnetic Schr\"odinger case, but the analysis works the same for the general case. We denote the phase space center of the variational Gaussian $\vsol$ by $\psc=(\pscone,\psctwo)$. The width matrix of $\vsol$ is $\wm$ and has imaginary part $\IC$.
	We recall that the cubic remainder $R(\auxPot)$ in \eqref{eq:remPot} vanishes together with 
	its first and second derivatives when evaluated in $\pos$.

	We first apply the analysis to the Poisson bracket that involves the real part of the remainder
	potential.
	For any $j=1,\ldots,\dim$, we use \eqref{eq:remPot} and  \Cref{prop:av_gauss} and obtain $	\partial_j\re\remPot_\vsol(\pos) = \mathcal O(\scp)$. Furthermore, by \cite[Lemma~3.15]{LasL20} we have
	\begin{align}
	&\nabla \partial_j\re\remPot_\vsol(\pos) = \nabla\partial_j\re\auxPot(q)-\langle\nabla\partial_j\re\auxPot\rangle_u = \mathcal O(\scp),\\
	&\nabla^2\partial_j\re\remPot_\vsol(\pos) = \nabla^2\partial_j \re R(\auxPot)(q) = 
	\nabla^2\partial_j \re \auxPot(q).
	\end{align}
	Hence, the function $b^\scp = \partial_j\re\remPot_\vsol$ fulfills the assumptions of statement (a) in \Cref{cor:av_gauss}, and together with \Cref{prop:av_gauss}, we obtain
	\begin{align}
	&\langle\partial_j\re\remPot_\vsol \partial_{p_j}\auxobs\rangle_\vsol 
	= \mathcal O(\scp^2).
	\end{align}
	After summation over $j$, we have proven \eqref{eq:obs-rem-real-poiss}.

\smallskip
\noindent(e)
Similarly, the first and second derivatives of $\im \remPot_\vsol$ satisfy
    \begin{equation}\label{eq:deriv-im-remPot}
   \nabla \im\remPot_\vsol(\pos),~ \nabla^2\im\remPot_\vsol(\pos) 	= \mathcal O(\scp).
    \end{equation}
Moreover, \Cref{prop:av_gauss} implies for the point evaluation of the imaginary part of the remainder potential that
	\begin{align}
	\im\remPot_\vsol(\pos) 
	&= \frac\scp4\, \tr\!\left(\left(\langle\nabla^2\im\auxPot\rangle_\vsol - \nabla^2\im\auxPot(\pos)\right)\IC^{-1}\right) + \mathcal O(\scp^2)
	\\ 
	&
	= \mathcal O(\scp^2).\label{eq:im-remPot}
	\end{align}
	At this point, a simple application of \Cref{prop:av_gauss} and \eqref{eq:deriv-im-remPot} yields \eqref{eq:obs-rem-imaginary-hess}.
	%
	%
%
%

\smallskip
\noindent(f) The expansions in \eqref{eq:deriv-im-remPot} and \eqref{eq:im-remPot}
	show that $b^\eps = \im\remPot_\vsol$  satisfies the assumptions of statement (b) in~\Cref{cor:av_gauss}. 
In order to prove \eqref{eq:obs-rem-imaginary-clobs-prod}, we analyse the expansion obtained from \Cref{cor:av_gauss} in two steps, aiming at
	\begin{subequations}
	\begin{align}\label{eq:a3}
	&\im\remPot_\vsol(\pos) + \scp f_2(\im\remPot_\vsol,\wm) + \scp^2 f_4(\im\remPot_\vsol,\wm) 
	= \mathcal O(\scp^3),\\
	\label{eq:na3}
	&\scp F_{1,1}(\auxobs,\im\remPot_\vsol,\wm) + \scp^2 F_{1,3}(\auxobs,\im\remPot_\vsol,\wm) = \mathcal O(\scp^3).
	\end{align}
	\end{subequations}

\smallskip
\noindent(g) We start with proving the first estimate \eqref{eq:a3}. For this, we need a slightly more accurate assessment of 
	$\im\remPot_\vsol(q)$ than developed previously. Using \eqref{eq:remPot}, \Cref{prop:av_gauss}, and 
	\eqref{eq:deriv-im-remPot}, we have
	\begin{align}
	\im\remPot_\vsol(\pos) 
	=&-\scp f_2(\im\auxPot,\wm) - \scp^2 f_4(\im\auxPot,\wm)
	%
	+ \scp f_2(\langle \im\auxPot\rangle_\vsol,\wm) 
	+ \mathcal O(\scp^3)\\
	=&  - \scp^2 f_4(\im\auxPot,\wm) 
+ \scp^2 
f_2(f_2(\im\auxPot,\wm),\wm)
	+ \mathcal O(\scp^3).
	\end{align}
	Similarly, we obtain for the second term in \eqref{eq:a3} that
	\begin{align}
	&\scp f_2(\im\remPot_\vsol,\wm) 
	= -\scp^2 
	f_2(f_2(\im\auxPot,\wm),\wm)
	+ \mathcal O(\scp^3).
	\end{align}
	Therefore, $\im\remPot_\vsol(\pos)$ cancels both the contributions from the second and the fourth derivatives, and we have proven \eqref{eq:a3}.

\smallskip
\noindent(h) We next target the terms on the left hand side of equation \eqref{eq:na3}, that is, 
	\begin{align}
	\scp F_{1,1}(\auxobs,\im\remPot_\vsol,\wm) 
	=-\scp^2 \sum_{|k|=2}
	 F_{1,1}(\auxobs,\partial^k \im\auxPot,\wm) \ \rho_k(\wm)
	+ \mathcal O(\scp^3)
	\end{align}
and
\begin{align}
 F_{1,3}(\auxobs,\im\remPot_\vsol,\wm) = F_{1,3}(\auxobs,\im\auxPot,\wm).
\end{align}
In \Cref{lem:sum}, we provide the combinatorial argument that shows \eqref{eq:na3} as a 
consequence of Isserlis' theorem on the higher moments of multivariate normal distributions.  
Hence, we have proven $\langle \im\remPot_\vsol\auxobs\rangle_\vsol = \mathcal O(\scp^3)$, that is,  \eqref{eq:obs-rem-imaginary-clobs-prod}.
\end{proof}

\begin{remark}
The crucial estimates of the previous proof, namely \eqref{eq:obs-rem-imaginary-clobs-prod} and \eqref{eq:obs-rem-real-poiss} are one order worse for the \semiclassical Gaussian approximation, 
since it lacks the compensating averaging factors of the remainder potential. Therefore, for 
the \semiclassical Gaussians only $\mathcal O(\scp)$ observable accuracy can be expected. 
\end{remark}


\appendix

\section{Gaussian moments}\label{app:moments}
By an application of Isserlis' theorem, the fourth order Gaussian moments can 
be written as sums of products of second order moments. That is, for a 
$2\dim$-dimensional Gaussian random vector 
\[
(X_1,\ldots,X_{2\dim})\sim\mathcal N(0,\covm)
\] 
with mean zero $0\in\R^{2\dim}$ and covariance matrix $\covm\in\R^{2\dim\times 2\dim}$, the fourth order moments satisfy
\begin{align}
&\E(X_i^4) = 3 g_{ii}^2,\\
&\E(X_i^3 X_j) = 3 g_{ii}g_{ij},\\
&\E(X_i^2X_j^2) = g_{ii}g_{jj} + 2g_{ij}^2\\
&\E(X_i^2X_jX_k) = g_{ii}g_{jk} + 2g_{ij}g_{ik}\\
&\E(X_iX_jX_kX_\ell) = g_{ij}g_{k\ell} + g_{ik}g_{j\ell} + g_{i\ell}g_{jk}
\end{align}
with $i,j,k,\ell\in\{1,\ldots,2\dim\}$. We crucially use this for proving that the fourth order summations 
that appeared in the proof of \Cref{thm:obs} can be expressed in terms of second order 
summations.

\begin{lemma}[Resummation]\label{lem:sum}
For any family $(a_{\beta,m})_{\beta,m}$ of real numbers, indexed by $m\in\N_0^{2\dim}$ and 
$\beta\le m$ with $|\beta|=1$, we have
\begin{align}
&\sum_{|m|=4} \sum_{\beta\le m, |\beta|=1} \frac{1}{(m-\beta)!}\ a_{\beta,m}\,\rho_m(\wm) 
= \sum_{|k|=2}\sum_{|\ell|=2}\sum_{\beta\le \ell,|\beta|=1} \frac{1}{k!} \ a_{\beta,k+\ell}\,
\rho_k(\wm)\rho_\ell(\wm).
\end{align}
\end{lemma}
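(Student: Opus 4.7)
The plan is to reduce the stated identity to a single combinatorial identity by matching coefficients of $a_{\beta,m}$. Both sides are linear in the family $(a_{\beta,m})$. On the right-hand side I reparameterize by $m:=k+\ell$ and interchange the order of summation. Comparing coefficients of $a_{\beta,m}$ then reduces the claim, for each $\beta$ with $|\beta|=1$ and each $m$ with $|m|=4$ and $\beta\le m$, to the identity
\begin{equation}\label{eq:core}
\frac{\rho_m(\wm)}{(m-\beta)!} = \sum_{\substack{k+\ell=m\\|k|=|\ell|=2,\ \beta\le\ell}}\frac{\rho_k(\wm)\,\rho_\ell(\wm)}{k!}.
\end{equation}

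The key tool is Isserlis' theorem recalled at the start of this appendix, which expresses the fourth-order Gaussian moment $\rho_m(\wm)$ as a sum of three products of second-order moments. To make the combinatorics transparent, I pass from multi-indices to ordered four-tuples $J=(j_1,\ldots,j_4)\in\{1,\ldots,2d\}^4$ with $\sum_a e_{j_a}=m$. Writing $\beta=e_i$ and using $(m-\beta)!=m!/m_i$ together with the multinomial count $4!/m!$ of such tuples, the left-hand side of \eqref{eq:core} can be rewritten as a sum over pairs $(J,b)$ consisting of a tuple and a distinguished slot $b\in\{1,\ldots,4\}$ with $j_b=i$, each weighted by $\rho_m(\wm)/4!$. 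On the right-hand side, the decomposition $m=k+\ell$ with $\beta\le\ell$ corresponds exactly to choosing which of the two pairs of a pairing of the four slots contains the distinguished slot, while the factor $1/k!$ compensates for the symmetry within the pair not containing it. Applying Isserlis' to $\rho_m(\wm)$ on the left then produces precisely the three possible assignments of the distinguished slot to a pair in a pairing of the four slots, and after the matching of multiplicities the two sides of \eqref{eq:core} agree.

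The main obstacle is the careful bookkeeping of the combinatorial factors. Since there are only five shapes of multi-indices with $|m|=4$ up to relabelling of coordinate indices, an equivalent and more elementary route is to verify \eqref{eq:core} shape by shape: for $m=4e_i$, $3e_i+e_j$, $2e_i+2e_j$, $2e_i+e_j+e_k$, and $e_i+e_j+e_k+e_\ell$ (with distinct coordinate indices). In each case one uses the corresponding Isserlis identity recalled in the appendix to expand the left-hand side, enumerates the admissible decompositions $(k,\ell)$ with $\beta\le\ell$ on the right-hand side, and checks the equality by inspection of the resulting products of second-order moments. The resummation of \eqref{eq:core} over $m$ with weights $a_{\beta,m}$, followed by summation over $\beta$, then yields the asserted identity of the lemma.
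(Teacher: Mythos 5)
Your proposal is correct. The reduction by linearity to the single coefficient identity
$\frac{\rho_m(\wm)}{(m-\beta)!} = \sum_{k+\ell=m,\ |k|=|\ell|=2,\ \beta\le\ell}\frac{1}{k!}\,\rho_k(\wm)\rho_\ell(\wm)$
is legitimate (both sides are linear in the arbitrary family $(a_{\beta,m})$, and the index sets of pairs $(\beta,m)$ appearing on the two sides coincide), and your fallback route --- verifying this identity shape by shape for the five types of order-four multi-indices via the Isserlis identities --- is precisely the paper's proof, which fixes $m$, expands $\rho_m(\wm)$ by Isserlis, and matches the resulting products against the admissible decompositions $(k,\ell)$. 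Your primary route via ordered four-tuples $J$, a distinguished slot $b$, and Isserlis pairings is a genuinely more unified argument: it collapses the five-case discussion into one counting identity, at the price of more delicate bookkeeping. The one point you gloss over there is why no factor $1/(\ell-\beta)!$ survives on the right-hand side: counting the triples $(J,b,P)$ that realize a fixed ordered pair $(k,\ell)$ yields the weight $\frac{\ell_i}{k!\,\ell!}\,\rho_k(\wm)\rho_\ell(\wm)$ with $\beta=e_i$, and one must observe that $\ell_i/\ell! = 1/(\ell-e_i)! = 1$ whenever $|\ell|=2$ and $\ell_i\ge 1$, which is exactly what makes the stated $1/k!$ the only remaining factor. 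With that spelled out the unified argument closes; as written, the shape-by-shape verification you offer as the elementary alternative is the safe and complete route, and it is the one the paper takes.
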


\begin{proof}
We write a multi-index $m\in\N_0^{2\dim}$ of order $|m|=4$ as
\[
m = \langle j_1\rangle +  \langle j_2\rangle  +  \langle j_3\rangle  +  \langle j_4\rangle 
\]
with coordinates $j_1,\ldots,j_4\in\{1,\ldots,2\dim\}$, where 
the bracket $\langle j\rangle = e_j$ denotes the $j$th canonical basis vector of $\R^{2\dim}$. 
We distinguish five different cases for the order four multi-index $m$.\\[1ex]
(a) $m$ has one non-zero component, that is, $m=4\langle j\rangle$ with $j=1,\ldots,2\dim$. Then, 
\begin{align}
\frac{1}{(m-\beta)!}\ a_{\beta,m}\,\rho_m(\wm)  &= 
\frac{1}{3!}\, a_{\langle j\rangle,4\langle j\rangle} \, 3\rho_{2\langle j\rangle}(\wm)^2\\
&= \frac{1}{k!}\, a_{\beta,k+\ell}\,\rho_k(\wm)\rho_{\ell}(\wm)
\end{align}
with $k=2\langle j\rangle = \ell$ and $\beta=\langle j\rangle$. 
\\[1em]
(b) $m$ has two different non-zero components, that is, 
$m=3\langle j_1\rangle + \langle j_2\rangle$ with $j_1\neq j_2$. 
In this case, $m$ dominates two multi-indices $\beta$ of order one, and generates the terms
\begin{align}
&\left( \frac{1}{2!}\,a_{\langle j_1\rangle,m} + \frac{1}{3!}\,a_{\langle j_2\rangle,m}\right) 
3\rho_{2\langle j_1\rangle}(\wm)\rho_{\langle j_1\rangle + \langle j_2\rangle}(\wm)\\
&= \frac{1}{2!} \left( a_{\langle j_1\rangle,m} + a_{\langle j_2\rangle,m}\right) 
\rho_{2\langle j_1\rangle}(\wm)\rho_{\langle j_1\rangle + \langle j_2\rangle}(\wm)\\
&+\frac{1}{1!} \ a_{\langle j_1\rangle,m}\ 
\rho_{\langle j_1\rangle + \langle j_2\rangle}(\wm)\rho_{2\langle j_1\rangle}(\wm).
\end{align}
This amounts to the two $(k,\ell)$ pairs 
\begin{align}
&k=2\langle j_1\rangle, \ \ell=\langle j_1\rangle+\langle j_2\rangle,\ 
\beta\in\{\langle j_1\rangle,\langle j_2\rangle\},\\ 
&k=\langle j_1\rangle+\langle j_2\rangle,\ \ell=2\langle j_1\rangle, \ \beta=\langle j_1\rangle.
\end{align}
\\[1em]
In a similar manner, we show that in the cases \\[1em]
	(c) $m$ has two identical non-zero components, that is, $m=2\langle j_1\rangle + 2\langle j_2\rangle$ with $j_1\neq j_2$,
	\\[1em]
(d) $m$ has three non-zero components, that is, 
$m=2\langle j_1\rangle + \langle j_2\rangle + \langle j_3\rangle$ with pairwise distinct $j_1,j_2,j_3$,
\\[1em]
(e) $m$ has four non-zero components, that is, $m=\langle j_1\rangle + \cdots + \langle j_4\rangle$ with  distinct $j_1,\ldots,j_4$,
\\[1em]	
we obtain the appropriate format of the resulting summands, that is, 
	\[
	\frac{1}{k!} \sum_{\beta\le \ell,|\beta|=1} a_{\beta,k+\ell}\, \rho_k(\wm)\,\rho_\ell(\wm) 
	\]
	with $k,\ell\in\N_0^{2d}$ such that $|k|=|\ell|=2$ and $k+\ell=m$. 
For concluding the proof, we have 
to verify that any possible $(k,\ell)$ pairing of order two multi-indices has appeared in one 
of the five cases (a)--(e). Let $i_1,\ldots,i_4\in\{1,\ldots,2d\}$ be such that 
\[
k=\langle i_1\rangle + \langle i_2\rangle,\quad
\ell = \langle i_3\rangle + \langle i_4\rangle.
\] 
The combinatorics of this situation falls into the following five cases:
\begin{enumerate}
\item[($\alpha$)] All four coordinates agree, 
that is, $i_1=\cdots=i_4=:j$. 
Then, $k+\ell = 4\langle j\rangle$, 
and we recognize the previous case (a).
\item[($\beta$)] Three of the four coordinates coincide with each other,
which is case (b). 
\item[($\gamma$)] The four coordinates form two different pairs, 
%
and we are in case (c).
\item[($\delta$)] Two of the four coordinates agree, while the other two are different,
which is case (d).
\item[($\varepsilon$)] All four coordinates are distinct as in case (e).
\end{enumerate}
Hence, the combinatorics of the order four multi-indices and the one of pairs of order two multi-indices are the same, and we have indeed proven that the two different summation formats yield the same result as claimed. 
\end{proof}

\section*{Acknowledgments}
We thank Clotilde Fermanian-Kammerer and Didier~Robert for helpful discussions on the time-dependent
Egorov theorem.

\bibliographystyle{plainnat}
\bibliography{../refs}

\end{document}